\pgfplotsset{compat=1.9}
\setlist[itemize]{label={\footnotesize\textcolor{gray}{\textbullet}},leftmargin=15pt}
\theoremstyle{definition}
\newtheorem{definition}{Definition}[section]
\newtheorem{theorem}[definition]{Theorem}
\newtheorem{proposition}[definition]{Proposition}
\newtheorem{lemma}[definition]{Lemma}
\theoremstyle{definition}
\newtheorem{remark}[definition]{Remark}
\newtheorem{example}[definition]{Example}
\numberwithin{equation}{section}
\long\def\DETAILED#1\endDETAILED{\begin{shaded} #1\end{shaded}} % the default version is the long one
\long\def\DETAILED#1\endDETAILED{} %  Cancel % at the beginning of this line to switch on the short version
\definecolor{shadecolor}{gray}{0.95}
\definecolor{bleu}{rgb}{0,0.45,0.60}  % Pour Sylvie pour les versions courtes des preuves 
\definecolor{ForestGreen}{RGB}{18,90,18} %\definecolor{ForestGreen}{RGB}{34,139,34} % Myriam
\definecolor{amethyst}{rgb}{0.6, 0.4, 0.8} 
\def \disp {\displaystyle}
\newcommand*{\e}{\mathrm{e}}
\def \d {{\rm{d}}}
\def \X {{\mathbb X}} 
\def \N {{\mathbb N}} 
\def \R {{\mathbb R}}
\def \C {\mathcal{C}}
\def \M {\mathcal{M}}
\def \eps {{\varepsilon}}
\def \D {\mathcal{D}}
\newcommand{\un}{\mathds{1}}
\def \bv {{\bf v}}
\def \phi {{\varphi}}
\newcommand{\cm}{{\mathfrak{m}}} % center of mass
\newcommand \nrg {{\mathcal{E}}}
\newcommand{\vol}[1]{{\mathnormal{V}}\hspace{-1mm}{\mathnormal{ol}}\left(#1\right)}
\newcommand*{\grande}{{\mathring{x}}}   \newcommand{\piccola}{{\dot{x}}}
\newcommand{\bx}{{\bf x}} 
\newcommand{\by}{{\bf y}} 
\newcommand*{\bgrande}{{\mathring{\bx}}}   \newcommand{\bpiccola}{{\dot{\bx}}} 
\newcommand{\bpiccolaR}{{\dot{\bx}_{\vert_R}}}
\newcommand{\bgrandey}{{\mathring{\by}}}   \newcommand{\bpiccolay}{{\dot{\by}}}
\newcommand{\bgrandev}{{\mathring{\bv}}}   \newcommand{\bpiccolav}{{\dot{\bv}}}
\newcommand{\grandev}{{\mathring{v}}}  \newcommand{\piccolav}{{\dot{v}}}
\newcommand*{\XGrande}{{\mathring{\X}}}
\newcommand*{\XPiccola}{{\dot{\X}}}
\newcommand*{\MGrande}{{\mathring{\M}}}
\newcommand*{\MPiccola}{{\dot{\M}}}
\def \rgrande {\mathring{r}}
\newcommand{\mesGrande}{{\lambda}} %\newcommand{\mesGrande}{{\mathring{\lambda}}}
\newcommand{\BGrande}{{\mathbb{B}}} % union of the n balls with radii \rdep around the large particles
\def \rpiccola {\dot{r}} 
\def \rdep {{\overset{\varodot}{r}}}
\newcommand{\Rgrande}{\mathring{R}}
\newcommand{\Rpiccola}{\dot{R}}
\newcommand*{\Grande}{\mathring{X}}
\newcommand*{\Piccola}{\dot{X}}
\newcommand*{\sigmaG}{%\mathring{\sigma}}
}
\newcommand*{\sigmaP}{\dot{\sigma}}
\newcommand*{\zpiccola}{{\dot{z}}}
\newcommand*{\phidep}{{\phi}}
\newcommand*{\psiG}{{\mathring{\psi}}}
\newcommand*{\psiPR}{{\dot{\psi}^R}}
\newcommand*{\WGrande}{{\mathring{W}}}
\newcommand*{\WGrandei}{{\mathring{W_i}}}
\newcommand*{\WGrandeone}{{\mathring{W_1}}}
\newcommand*{\WPiccola}{{\dot{W}}}
\newcommand*{\WPiccolak}{{\dot{W_k}}}
\newcommand*{\mugrande}{{\mathring{\mu}}}
\newcommand*{\numR}{{\bar{\nu}^{m,R}}}
\newcommand{\BadPath}{{\mathcal{B}}}
\newcommand{\NicePath}{{\mathcal{N}}}
\newcommand{\ZRmixing}{{\mathcal{Z}_R}}
\newcommand*{\Vovlap}{\mathcal{V}_{\rm{ovlap}}}
\newcommand*{\rspace}{\,\hspace{-.14em}}
\newcommand{\defeq}{\vcentcolon=}
\newcommand{\eqdef}{=\vcentcolon}
\author{Myriam Fradon\thanks{Univ. Lille, CNRS UMR 8524, Laboratoire P. Painlevé, {\tt myriam.fradon@univ-lille.fr}}\ \quad Julian Kern\thanks{Weierstrass Institute for Applied Analysis and Stochastics, Berlin, {\tt kern@wias-berlin.de}}\ \quad Sylvie R\oe lly\thanks{University of Potsdam, {\tt roelly@math.uni-potsdam.de}}\ \quad Alexander Zass\thanks{Weierstrass Institute for Applied Analysis and Stochastics, Berlin, {\tt zass@wias-berlin.de}}}
 \date{\small \emph{Francis, un bon, un vrai copain des temps anciens, lorsque nous défrichions ensemble, avec Claude, Sylvie M., Christian et Patrick  le domaine des} systèmes de particules \emph{nouvellement né}.\\ 
 \emph{Travail de fourmi, sans faire de mousse. \\
 Francis était toujours présent, constant, confiant, clairvoyant et gentil.\\
 Copain idéal pour ne pas perdre le Nord. Copain qui reste patient quand on ne l'est plus. \\
 Toutes les qualités d'un} honnête homme \emph{et d'un scientifique de haut vol.\\
 Merci à toi Francis, fidèle éclaireur. Tu es parmi nous, tout simplement. \emph{(Sylvie)}
 } }
\title{Diffusion dynamics for an infinite system of two-type spheres and the associated depletion effect}
\let\@fnsymbol\@arabic
\begin{document}%%%%%%%%%%%%%%%%%%%%%%%%%%%%%%%%%%%%%%%%%%%%%%%%%%%%%%%%%%%%%%%%%
\maketitle

\begin{abstract}
We consider a random diffusion dynamics for an infinite system of hard spheres of two different sizes evolving in $\R^d$, its reversible probability measure, and its projection on the subset of the large spheres. The main feature is the occurrence of an attractive short-range dynamical interaction -- known in the physics literature as a depletion interaction -- between the large spheres, which is induced by the hidden presence of the small ones. By considering the asymptotic limit for such a system when the density of the particles is high, we also obtain a constructive dynamical approach to the famous discrete geometry problem of maximising the contact number of $n$ identical spheres in $\R^d$. As support material, we propose numerical simulations in the form of movies.

\medbreak
\noindent
\emph{Key words and phrases:} Stochastic differential equation, Hard core interaction, Reversible measure, Collision Local time, Colloids, Depletion interaction, Gibbs point process\\
\emph{MSC2020:} 60K35; 60H10; 60J55; 82D99
\end{abstract}

%>>>>>>>>>>>>>>>>>>>>>>>>>>>>>>>>>>>>>>>>>>>>>>>>>>>>>>>>>>>>>>>>>>>>>>>>>>>>>>>>>
\section{Introduction: the model and its configuration space}\label{Introduction}
%>>>>>>>>>>>>>>>>>>>>>>>>>>>>>>>>>>>>>>>>>>>>>>>>>>>>>>>>>>>>>>>>>>>>>>>>>>>>>>>>>
Consider hard spheres randomly oscillating in a bath of very small particles (see Figure~\ref{fig:twosizemodel})  which are themselves independently randomly vibrating. As soon as two hard spheres are very close, one can  observe the appearance of a strong mutual attraction which forces them to stay close together for a certain random amount of time. As no external force is acting on the system, this is a quite surprising phenomenon. What is going on?

To provide an answer, we propose a mathematical formulation of this apparent paradox: in the introductory section, we first present the classical Asakura--Oosawa model from Chemical Physics, and then describe the mathematical setting of this work. 
%%%%%%%%%%%%%%%%%%%%%%%%%%%%%%%%%%%%%%%%%%%%%%%%%%%%%%%%%%%%%%%%%%%%%%%%%%
\subsection{The origin of the model: a short heuristic}\label{sect_model}
%%%%%%%%%%%%%%%%%%%%%%%%%%%%%%%%%%%%%%%%%%%%%%%%%%%%%%%%%%%%%%%%%%%%%%%%%%
The model is the following: 
spheres of equal radius $\rgrande$ evolve in a bath of much smaller ones with radius $\rpiccola \ll \rgrande$. The radius $\rpiccola$ is called {\em depletion radius} for a reason which will become clear later.
The larger spheres are \emph{hard} in the sense that they cannot overlap: their interiors must always stay disjoint.
The smaller spheres -- called {\em particles} for clarity -- which compose the (random) medium  are also not allowed to overlap the large ones. This leads to the presence of a virtual spherical shell around each large sphere, corresponding to the zone in which the centres of the particles are not allowed, see Figure~\ref{fig:twosizemodel} (left). This zone, called {\em depletion shell}, will play a fundamental role in what follows.
Finally, because of their small radii, the particles can be viewed as an \emph{ideal gas}, i.e., they can overlap each other.
In Figure \ref{fig:twosizemodel}, a realisation of this  model; the region coloured in orange is the union of the depletion shells.

\begin{figure}
\begin{center}
\includegraphics[height=5.5cm]{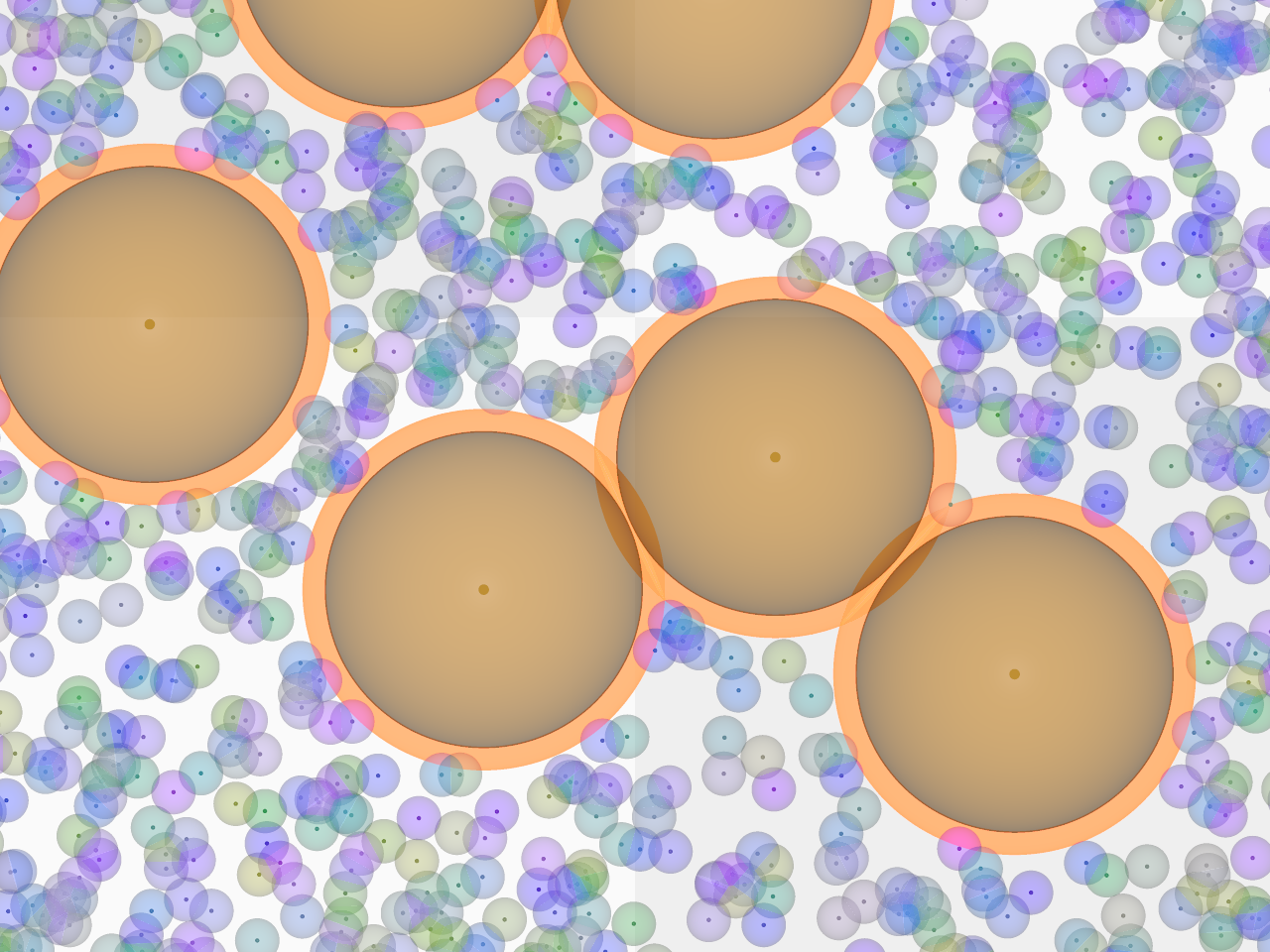}
\hspace{1em}
	\includegraphics[height=5.5cm]{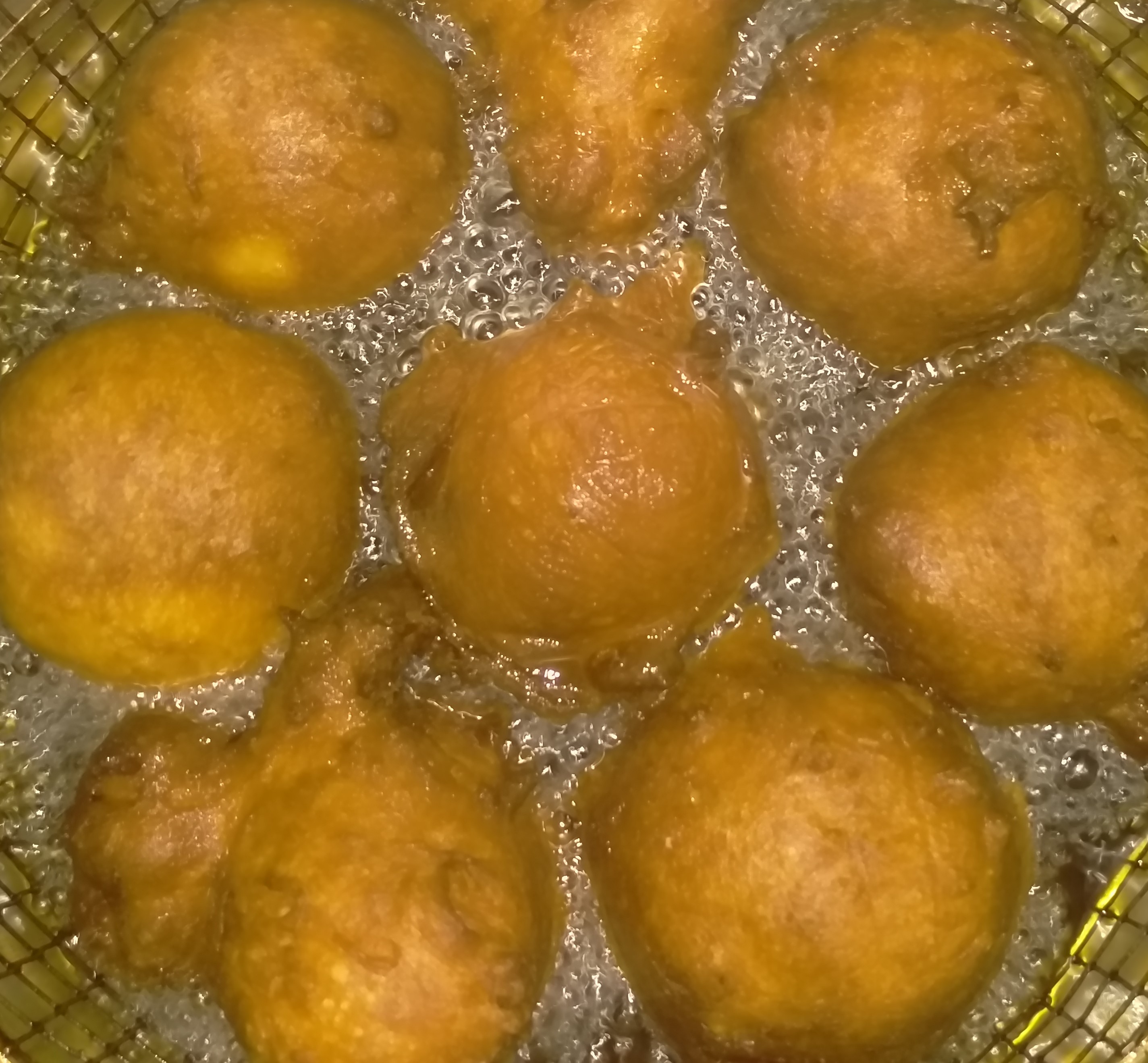}
\end{center}
\caption{{\small Identical hard spheres in a bath of %light-coloured
identical small particles. 
An ideal mathematical representation (left) and a culinary realisation (right, jelly doughnuts in  particles of frying oil).\\
Left: the orange depletion shells around the brown spheres overlap.}}\label{fig:twosizemodel}
\end{figure}

This model is known in the physics literature as AO-model in reference to the seminal work of S.~Asakura and F.~Oosawa, who introduced in~\cite{AO54,AO58} a size-asymmetric binary mixture in the Euclidean space $\R^3$ to describe colloids (large spheres) in a bath (or emulsion) of ideal polymers (the small particles) in the context of Chemical Physics, see also~\cite{Vr76}. The reader may refer to~\cite{Ma18} for a clear overview of the physical phenomenon and its modelling. Its importance is underlined by Binder, Virnau and Statt in~\cite{BVS14}: ``Since 60 years the Asakura--Oosawa model, which simply describes the polymers as ideal soft spheres, is an archetypal description for the statistical thermodynamics of such systems, accounting for many features of real colloid-polymer mixtures very well.'' Indeed the bath of polymers induce a new attractive interaction between the colloids, called effective or {\em depletion interaction}. For a physical theoretical analysis of this general phenomenon see, e.g.,~\cite{RED00} and the valuable monograph~\cite{LT11}.
As an illustration of the effect of the depletion force one can cite the ordered, helical conformation of long molecular chains like DNA in the Euclidean space $\R^3$ by interpreting this geometric structure as being thermodynamically induced by the entropy minimisation of depleting spheres, see~\cite{SK05}.

A first rigorous mathematical treatment of the AO-model for an infinite number of both sphere types with methods of Statistical Mechanics appeared only recently in a series of papers by S.~Jansen and coauthors, see~\cite{JT19, JT20, WJL22}.
%%%%%%%%%%%%%%%%%%%%%%%%%%%%%%%%%%%
\subsection{The mathematical model}\label{sec:mathmodel}
%%%%%%%%%%%%%%%%%%%%%%%%%%%%%%%%%%%

The geometric objects we deal with in this paper are spheres of two different types: the \emph{hard spheres} with fixed radius $\rgrande$ and the \emph{particles} with radius $\rpiccola<\rgrande$. They are identified by the position of their centres in $\R^d$; if a point $x \in \R^d$ is  the centre of a hard sphere we denote it by $\grande$, if it is the centre of a particle we denote it by $\piccola$. 
In this way we can consider the set $\X = \XGrande \bigsqcup \XPiccola \simeq \R^d \times\{ \circ,\cdot{~} \}$ as duplication of $\R^d$, to distinguish between the two types of spheres.
\smallskip

Throughout the paper, the number of hard spheres will be finite and fixed, equal to $n\geq 1$.
\smallskip

The \emph{configuration space} of the system is the set $\M$ of $\sigma$-finite Radon point measures on $\X$, i.e., those of the form
 \begin{equation*}
    \bx = \bgrande + \bpiccola = \sum_{i=1}^n \delta_{\grande_i} + \sum_{k \in K}\delta_{\piccola_k},\quad \grande_i \in \XGrande,\, \piccola_k \in \XPiccola,\quad K\subset \N^*,
 \end{equation*}
such that for any compact $\Lambda\subset\X$, $\bx(\Lambda) < + \infty$. 
\smallbreak

With this formalism, $\bgrande$ denotes the point measure of centres of hard spheres belonging to the configuration $\bx$ and $\bpiccola$ denotes the point measure of centres of the particles in $\bx$. 
As the point measures we consider are a.s. simple, we can use the notation $\bx$ interchangeably for the point measure or for its support $\{\grande_i,\piccola_k, 1\le i \le n, k \in K\}\subset \R^d$. We write the sum of two point measures as the juxtaposition $\bgrande \bpiccola\defeq \bgrande + \bpiccola$.
For $m \in \N$, let  $\M_{m} \subset \M$ be the set of finite configurations with exactly $m$ particles, that is
\begin{equation*}
    \M_{m}\defeq \{\bx=\bgrande\bpiccola \in \M,  \ \bpiccola (\XPiccola) =m\}.
\end{equation*}
It is used in the first step of the proof of Theorem \ref{th:existenceSninfty} to approximate the infinite configurations. 
We write $\MGrande$ (resp.~$\MPiccola$) for the point measures supported only by hard spheres (resp.~particles). 

In the following, $B(y,r)$ denotes the closed ball in $\R^d$ centred in $y\in\R^d$ with radius $r\in\R_+$.

\bigskip

The so-called \emph{admissible} configurations, i.e., those respecting the non-overlap constraint, make up the following subset $\D\subset \M$:
\begin{equation} \label{eq:D}
\D = \Bigg\{\bx=\bgrande\bpiccola \in \M:
\begin{array}{rl}
    \forall i \neq j, \ 
	 &|\grande_i - \grande_j| \ge 2\rgrande, \\
    \forall i , k, \ 
	 &|\grande_i - \piccola_k| \ge \rgrande+\rpiccola
\end{array}
\Bigg\}.
\end{equation}
The second type of constraints in \eqref{eq:D} can be interpreted as follows: around each hard sphere $B(\grande_i,\rgrande)$ there is a shell of thickness $\rpiccola$, called \emph{depletion shell}, that is forbidden for the centres of the particles $(\piccola_k)_k$, see Figure~\ref{fig:twosizemodel}. We therefore introduce the radius $\rdep$, seen as an enlargement of the original hard-sphere radius $\rgrande$: 
\begin{equation} \label{eq:defrho}
\rdep\defeq \rgrande+\rpiccola =% \rgrande \, (1 + \frac{\rpiccola}{\rgrande})\eqdef 
 \rgrande \, (1 + \rho), \textrm{ where } \rho\defeq \frac{\rpiccola}{\rgrande}\in [0,1[\, ,
\end{equation}
and identify the forbidden area for particles around the admissible configuration of hard spheres $\bgrande$ as the interior of
\begin{equation}\label{UnionBouees}
  \BGrande(\bgrande)\defeq  \bigcup_{i=1}^{n}B(\grande_i,\rdep) \subset \R^d.
\end{equation}
As we will see in the paper, the relative size $\rho$ between particle and hard sphere radii, defined in \eqref{eq:defrho}, plays an important role in the study of this two-size model.

\bigbreak
Our aim here is to present and study a dynamical version of the AO-model and its depletion feature. We first construct, in Section \ref{sect_dyninfinie}, infinite-dimensional random diffusion dynamics whose reversible (i.e., equilibrium) measure is the AO-Model for $n$ hard spheres in a bath of infinitely many particles. Section \ref{sec:depletion} is devoted to the study of the projection of this two-type reversible  measure onto the subsystem of hard spheres.
We first notice how it induces a new attractive interaction (in the sense of Statistical Mechanics) between the hard spheres, called depletion interaction. This new term is induced by the hidden presence of the particle bath and is proportional to the volume of the depletion shells around the hard spheres, see Proposition \ref{prop:tracemesureequ}. Detailed computations and geometric comments in particular cases are then presented. Moreover, a gradient random dynamics associated to this measure is proposed in Section \ref{sec:dep_dynamics}. In Section \ref{sec:packing}, we consider the asymptotic regime corresponding to the system of $n$ spheres in a bath with a very high density of particles. The depletion interaction thus dominates the system to the extent that the reversible measure concentrates on $n$ hard sphere configurations in $\R^d$ which maximise their contact number. In this way, we obtain a constructive random dynamical approach via gradient diffusions to the difficult problem of optimal sphere packing for any number $n$ of spheres and in any dimension $d$. 

In order to get an understanding for the behaviour of the two-type dynamics of Section \ref{sect_dyninfinie}, as well as of the gradient random dynamics with depletion studied in Section \ref{sec:dep_dynamics}, we decided to write code for simulations. The link to the GitLab page is provided in Section \ref{AppendixSimulations}, along with a short presentation of the animations one can find there.

%%%%%%%%%%%%%%%%%%%%%%%%%%%%%%%%%%%%%%%%%%%%%%%%%%%%%%%%%%%%%%%%%%%%%%%%%%%%%%%%%%%%%%%%%%%%
\section{Diffusion of hard spheres in an infinite bath of Brownian particles}\label{sect_dyninfinie}
%%%%%%%%%%%%%%%%%%%%%%%%%%%%%%%%%%%%%%%%%%%%%%%%%%%%%%%%%%%%%%%%%%%%%%%%%%%%%%%%%%%%%%%%%%%
%%%%%%%%%%%%%%%%%%%%%%%%%%%%%%%%%%%%%%%%%%%%%%%%%%%%%%%%%%%%%%%%%%%%%%%%%%%%%%%%%%%%%%%%%%%
Finding appropriate random dynamics that describe the time evolution of various two-type physical systems is an old challenge. See, e.g., 
the mechanical model of Brownian motion proposed in \cite{DGL} for the motion of a large component whose velocity follows an Ornstein-Uhlenbeck diffusion in an infinite bath of small particles; 
\cite{ST93}, in which a Brownian sphere interacts with infinitely-many particles of vanishing radius; 
\cite{BurdzyCP}, in which the authors exhibit a kind of Archimedes’ principle for a large disc evolving as a Brownian motion with drift (due to the force of gravity) in a one-sided open cylinder of $\R^2$, submerged in a large number of much smaller discs. 

Despite the vast literature, to the best of our knowledge, there is no study that takes into account a two-type hard-core interaction. The specificity of our approach lies therefore in the construction of a strong solution to an infinite-dimensional stochastic differential system for a two-size model of large hard spheres and small particles that are diffusing under the infinitely many non-overlap constraints \eqref{eq:D}. The main technical difficulty consists in controlling the reflection at the boundary of the set of admissible configurations, expressed mathematically as infinitely many local-time terms appearing in the stochastic differential equation (SDE) that describes the time evolution of each sphere.

%%%%%%%%%%%%%%%%%%%%%%%%%%%%%%%%%%%%%%%%%%%%%%%%%%%%%%%%%%%%%%%%%%%%%%%%%%%%%%
\subsection{Existence and uniqueness result for an infinite-dimensional 
random dynamics with reflection}
%%%%%%%%%%%%%%%%%%%%%%%%%%%%%%%%%%%%%%%%%%%%%%%%%%%%%%%%%%%%%%%%%%%%%%%%%%%%%%

We now introduce and study the random evolution in $\R^d$ of our two-type system. To simplify, we restrict the time evolution to the time interval $[0,1]$, noting that it can be extended by Markovianity to any time interval.

The system is described as follows:
\begin{itemize}
    \item $n$ {\em hard spheres} with radius $\rgrande$, whose centres at time $t$ are denoted by $\{\Grande_1(t), \dots, \Grande_n(t)\}$, move according to $n$ independent Brownian motions. 
    
In order to avoid their dispersion at infinity, they are smoothly confined around the origin by a self-potential $\psiG: \R^d \to \R $ of class $\C^2$ with bounded derivatives and satisfying
\begin{equation} \label{eq:psiG}
 \int_{\R^d} \exp \big(-\psiG (x)\big) \, dx = 1 \quad \text{ and } \quad \int_{\R^d} |x|^2 ~ \e^{-\psiG(x)} \, d x <+\infty .
 \end{equation}
It is simple to show that such a function (having linear growth with respect to the Euclidean norm at infinity) exists.
Moreover, the measure 
\begin{equation*}
    \lambda(dx) \defeq \exp \big(-\psiG (x)\big) \, dx
\end{equation*}
is a probability measure with second moment, and plays a reference role in what follows.
    \item The hard spheres evolve in a time-inhomogeneous {\em random medium} consisting of a field $\sum_k \delta_{\Piccola_k(\cdot)}$ of intensity $\zpiccola$ of infinitely many small particles, themselves moving according to $\sigmaP$-scaled independent Brownian motions.
    \item The only interactions between the hard spheres and the small particles are due to the {\em non-overlap constraints} \eqref{eq:D}, in the sense that, at each time, the two-type configuration should be admissible.
\end{itemize}
Fix a probability space $(\Omega,\mathcal{F},P)$. We can then describe this two-type dynamics with the following infinite-dimensional SDE with reflection:
\begin{equation}\label{eq:Sninfty} \tag{${\mathfrak S}$}
\begin{cases}
\begin{array}{l}
 \textrm{for } i,j \in \{1,\dots,n\}, \,   k \in \N^*, \,  t \in [0,1] ,                             \\ \disp  
 \Grande_i(t) = \Grande_i(0) +\sigmaG\, \WGrande_i(t) - \frac{1}{2} \int_0^t \sigmaG \ \nabla\psiG \big(\Grande_i (s)\big)\, ds \\  \disp
 \phantom{d \Grande_i (t) = }%\sigmaG \,}
                          + \sum_{j=1}^n \int_0^t \big(\Grande_i(s)-\Grande_{j}(s)\big)  d L_{ij}(s) + \sum_{k\geq 1} \int_0^t \big(\Grande_i(s)-\Piccola_{k}(s)\big)  d\ell_{ik}(s) , \\ \disp 
 \Piccola_k(t) = \Piccola_k(0) +\sigmaP\, \WPiccola_k(t) + \sigmaP^2 \sum_{i=1}^n \int_0^t \big(\Piccola_k(s)-\Grande_{i}(s)\big) \, d\ell_{ki}(s) , \\ \disp
  L_{ij}(0) = 0 , \quad L_{ij} \equiv L_{ji}, \quad
  L_{ij}(t) = \int_0^t \un_{|\Grande_i (s)-\Grande_j(s)|=2 \,\rgrande} \, d L_{ij}(s), \quad L_{ii} \equiv 0 , \\ \disp
   \ell_{ik}(0) = 0 , \quad \ell_{ik} \equiv \ell_{ki}, \quad
  \ell_{ik}(t) = \int_0^t \un_{|\Grande_i (s)-\Piccola_k(s)|=\rgrande + \rpiccola} \, d \ell_{ik}(s), \quad \ell_{ii} \equiv 0,
  \end{array}
\end{cases}
\end{equation}
 where the i.i.d.~sequences of $\R^d$-valued Brownian motions $(\WGrande_i)_{i=1,\dots,n}$ and $ (\WPiccola_k)_{k\in \N^*}$ are independent.
 
The local times $(L_{ij})_{i,j \in \{1,\dots,n\}}$ ensure that the hard spheres do not overlap pairwise. 
In case of a collision, they are submitted to an instantaneous repulsion corresponding to a normal reflection at the boundary of the set of admissible configurations.
Analogously, the local times $(\ell_{ik})_{i\in \{1,\dots,n\},\, k \in \N^*}$ ensure that the small particles do not overlap with the hard spheres.

The gradient term $\nabla\psiG $ guarantees that, in the absence of small particles, the large spheres undergo a recurrent diffusive motion whose unique reversible probability measure is known. 
The diffusion coefficient $\sigmaP$ parametrises the mobility of the small particles.
\smallbreak

We can now state the main result of this section. 
\begin{theorem} \label{th:existenceSninfty}
     The infinite-dimensional SDE with reflection \eqref{eq:Sninfty} admits for $\mu$-almost every deterministic initial condition a unique $\D$-valued strong solution, where the probability measure $\mu$, concentrated on $\D$, is given by \eqref{eq:mu}.
\end{theorem}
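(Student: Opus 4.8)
The plan is to construct the solution of \eqref{eq:Sninfty} as a limit of finite-particle approximations, identifying $\mu$ as its reversible measure along the way, in the spirit of the now-standard treatment of reflected diffusions of hard spheres. \emph{Step 1 (finitely many particles).} Fix $m\in\N$ and consider the SDE obtained from \eqref{eq:Sninfty} by retaining only the particles $\Piccola_1,\dots,\Piccola_m$ together with the local times $(\ell_{ik})_{1\le i\le n,\ 1\le k\le m}$. This is a finite-dimensional It\^o SDE with normal reflection on the domain $\D\cap\M_{m}\subset(\R^d)^{n+m}$, whose boundary is a finite union of pieces of the form $\{|a-b|=c\}$ with $c\in\{2\rgrande,\rdep\}$. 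Such a domain has a uniform exterior sphere property (each boundary piece has curvature at most $1/\min(2\rgrande,\rdep)$) and a salient normal cone: at any admissible configuration one can dilate all the centres away from the origin, and a direct computation shows that this direction has strictly positive inner product with every active inward normal, so the associated Skorokhod problem is well posed. By the theory of reflected SDEs in such domains (Tanaka, Lions--Sznitman, Saisho, adapted to hard-sphere configuration spaces as in earlier works on interacting Brownian balls) the $m$-particle system admits, for every deterministic initial condition in $\D\cap\M_{m}$, a unique $\D\cap\M_{m}$-valued strong solution $\bX^{m}$.

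\emph{Step 2 (reversibility of the finite system).} An integration by parts on $\D\cap\M_{m}$ shows that the Neumann generator of $\bX^{m}$ is symmetric with respect to the explicit probability measure $\mu^{m}$ under which the hard-sphere centres have density proportional to $\exp\bigl(-\sum_{i}\psiG(\grande_i)\bigr)$, restricted to pairwise non-overlap, and, conditionally on $\bgrande$, the $m$ particles are i.i.d.\ with density proportional to the indicator of $\R^d\setminus\BGrande(\bgrande)$ on a large box; the boundary terms cancel precisely because the dynamics is reflected rather than absorbed. Letting $m\to\infty$ with the particle intensity normalised to $\zpiccola$, the measures $\mu^{m}$ converge to the measure $\mu$ of \eqref{eq:mu}, under which the particles form, conditionally on $\bgrande$, a Poisson process of intensity $\zpiccola$ on $\R^d\setminus\BGrande(\bgrande)$. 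This both singles out $\mu$ as the reversible measure of the limiting dynamics and selects the admissible deterministic initial conditions: $\mu$-almost every configuration has at most polynomially many particle centres in $B(0,L)$ as $L\to\infty$, which is exactly the input the last step requires.

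\emph{Step 3 (infinitely many particles and uniqueness).} Using the confining drift $\nabla\psiG$ and the structure of the reflection terms, a priori estimates—uniform in $m$ and valid for every solution—confine $\{\Grande_i(t):t\in[0,1],\ 1\le i\le n\}$ to a random ball $B(0,\Rgrande)$. Under a $\mu$-typical initial configuration, only polynomially many particles start within any given distance, while on $[0,1]$ each performs a $\sigmaP$-scaled Brownian motion whose maximal displacement has Gaussian tails; a Borel--Cantelli argument over dyadic shells then shows that, almost surely, only finitely many particles ever reach the $\rdep$-neighbourhood of $B(0,\Rgrande)$, all the others satisfying $\ell_{ik}\equiv0$ and moving freely. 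Consequently the truncated solutions stabilise: for every $m$ beyond the (random, almost surely finite) label of the last relevant particle, the $\bX^{m}$ agree on the hard spheres and on any fixed finite set of particles, so $\bX:=\lim_{m}\bX^{m}$ exists almost surely, is $\D$-valued, and solves \eqref{eq:Sninfty}, the sums over $k$ there having almost surely only finitely many non-zero terms. The same localisation yields pathwise uniqueness: the set of particles that can ever touch a hard sphere is determined by the initial data and the driving Brownian motions alone, so two solutions from the same initial condition are governed by the same finite reflected SDE and hence coincide.

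\emph{Main obstacle.} The delicate part is to make this localisation rigorous and, crucially, uniform in $m$: one must control not only the hard spheres but also the cloud of particles that the reflection can displace, and verify that the Skorokhod map on $\D\cap\M_{m}$, once restricted to the relevant coordinates, does not feel the far-away particles—that is, that there is no instantaneous propagation of the non-overlap constraint along chains of touching spheres. This rests on quantitative exterior-sphere and normal-cone estimates for $\D$ that do not deteriorate as the number of particles grows, and it is there that the heart of the argument lies.
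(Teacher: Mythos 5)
Your architecture coincides with the paper's: truncate to finitely many particles, solve the finite reflected SDE and identify its reversible measure, let the truncation parameter grow, and use a Borel--Cantelli localisation to show that the truncated solutions stabilise for $\mu$-typical initial data. The finite-dimensional step is sound (the paper verifies the analogous compatibility condition via a cluster centre-of-mass vector rather than your dilation direction, and notes that the resulting constant degrades like $(n+m)^{-3/2}$, which is exactly why the finite theory cannot be pushed to $m=\infty$ directly), and your Poissonian identification of $\mu$ matches the paper's mixture $\mu^R$ of the measures $\nu^{m,R}$.

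The genuine gap is in Step 3, where you assert ``a priori estimates---uniform in $m$ and valid for every solution---confine $\{\Grande_i(t)\}$ to a random ball'' and that all but finitely many particles ``move freely'' with Gaussian displacement tails. Neither claim is available pathwise. The potential $\psiG$ has bounded derivatives, so the hard spheres are not deterministically confined; their displacement is driven by the local-time terms $\sum_k\int(\Grande_i-\Piccola_k)\,d\ell_{ik}$, i.e.\ by collisions with particles, while the particles are in turn displaced by collisions with the hard spheres. This feedback loop is precisely the obstruction, and no exterior-sphere or normal-cone estimate resolves it. The paper's resolution is different in kind: it works under the \emph{reversible} laws $\nu^{m,R}$ and their mixture $\mu^R$, and uses time-reversibility to write each coordinate as $\Grande_i(t)=\Grande_i(0)+\tfrac12\bigl(\WGrande_i(t)+\underleftarrow{\WGrande_i}(1-t)-\underleftarrow{\WGrande_i}(1)\bigr)$, which eliminates the drift and \emph{all} local times from the oscillation estimate and yields Gaussian modulus-of-continuity bounds (Lemma \ref{OscillationFiniteNumberBubbles}); these are then transferred from $\mu^R$ to $\mu$ via a summable total-variation comparison. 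This is why the theorem is stated for $\mu$-almost every initial condition: stationarity is needed for the path oscillation control itself, not merely to count the initial particles as in your proposal. Your uniqueness argument inherits the same problem: the set of particles that ever touch a hard sphere depends on the solution, so declaring it ``determined by the initial data and the driving Brownian motions alone'' is circular without the preceding localisation; the paper instead obtains uniqueness from the strong uniqueness of the finite systems together with the eventual stabilisation on the full-measure event $\Omega_{\bx}$.
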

   
The rest of this section is devoted to the proof of the above theorem. We split it into four steps, taking inspiration from, and generalising, the existence theorem obtained in \cite{FR2} for an infinite-dimensional diffusion with reflection of equal (one-size) spheres:
%%%%%%%%%%%%%%%%%%%%%%%%%%%%%%%%%%%%%%%%%%%%%%%%%%%%%%%%%%%%%%%%%%%%%%%
\medbreak

\noindent 
{\bf Step 1:} \emph{Dynamics for $n$ hard spheres and $m$ confined particles.}
%%%%%%%%%%%%%%%%%%%%%%%%%%%%%%%%%%%%%%%%%%%%%%%%%%%%%%%%%%%%%%%%%%%%%%%

We first approximate the above infinite-dimensional dynamics by a two-type dynamics concerning only a finite number $m\geq 1$ of particles. 
Moreover, we confine them by adding to their dynamics a restoring gradient drift that prevents their dispersion. The resulting dynamics is then described by the following finite-dimensional SDE:
\begin{equation}\tag{${\mathfrak S}_{m,R}$}\label{eq:SnmR}
\begin{cases}
\begin{array}{l}
  \textrm{for } i, j \in \{1,\dots,n\}, \,  k \in \{1,\dots,m\}, \,  t \in [0,1] ,                             \\ \disp
  \Grande_i (t) = \Grande_i(0) +\sigmaG\, \WGrande_i(t) -\frac{1}{2} \int_0^t \sigmaG \ \nabla\psiG \big(\Grande_i (s)\big)\, d s \\  
  \disp \phantom{d\Grande_i (t) = \sigmaG\,}
                   + \sum_{j=1}^n \int_0^t (\Grande_i-\Grande_{j})(s) dL_{ij}(s) + \sum_{k=1}^{m} \int_0^t (\Grande_i-\Piccola_{k})(s) d\ell_{ik}(s) , \\ \disp 
  \Piccola_k(t) = \Piccola_k(0) +\sigmaP\, \WPiccola_k(t) - \frac{\sigmaP^2}{2} \int_0^t \sigmaG \ \nabla\psiPR\big(\Piccola_i (s)\big)\, d s 
  \\  
  \disp \phantom{\Piccola_k(t) = \Piccola_k(0) +\sigmaP\, \WPiccola_k(t) - \frac{\sigmaP^2}{2}}
                  + \sigmaP^2 \sum_{i=1}^n \int_0^t (\Piccola_k-\Grande_{i})(s) d\ell_{ki}(s) ,\\ \disp
  L_{ij}(0) = 0 , \quad L_{ij} \equiv L_{ji}, \quad
  L_{ij}(t) = \int_0^t \un_{|\Grande_i (s)-\Grande_j(s)|=2\rgrande} \, d L_{ij}(s), \quad L_{ii} \equiv 0 , \\ \disp
   \ell_{ik}(0) = 0 , \quad \ell_{ik} \equiv \ell_{ki}, \quad \ell_{ik}(t) = \int_0^t \un_{|\Grande_i (s)-\Piccola_k(s)|=\rgrande + \rpiccola} \, d \ell_{ik}(s), \quad \ell_{ii} \equiv 0 .
  \end{array}
\end{cases}
\end{equation}
where $\WGrande_i,1 \leq i \leq n, \WPiccola_{k},1 \leq k \leq m,$ are independent $\R^d$-valued Brownian motions.\\[2mm]

The function $\psiPR: \R^d \rightarrow \R_+$ confining the particles is of class $\C^2$ with bounded derivatives. Moreover, it depends on the parameter $R\in \N$ in the following way:
\begin{equation} \label{eq:psiPR}
    \psiPR(x)=0 \textrm{ if } x \in B(0,R) \quad \textrm{ and } \quad \sum_{R=1}^\infty \int_{B(0,R)^c} \e^{-\psiPR(x)} \, d x <+\infty.
\end{equation}
Since it vanishes in the ball $B(0,R)$, its confining effect decreases and eventually disappears as $R$ tends to infinity.
Such a function can be constructed, e.g., by defining it proportional to $|x|-R $ for $x$ far from the origin.

We define the set of \emph{finite} admissible configurations with $m$ particles as \mbox{$\D_m \defeq \D \cap \M_{m}$}.

\begin{proposition} \label{prop:existencesolrevSnmR}
The SDE with reflection \eqref{eq:SnmR} admits, for any deterministic initial condition in the interior of the domain $\D_m$, a unique strong solution 
\begin{equation*}
   \big( \Grande_i^{m,R}(t), \Piccola_k^{m,R}(t),  L_{ij}^{m,R}(t), \ell_{ik}^{m,R}(t) \big)_{t\in [0,1],\, 1\le i,j \le n,\, 1\le k\le m} \,.
\end{equation*}
The finite measure $\nu^{m,R}$, concentrated on the admissible configurations with $m$ particles, and given by
\begin{equation}\label{eq:numR}
    \nu^{m,R} (d\bx) 
 \defeq \un_{\D_m}(\bx) ~   \e^{- \sum_{k=1}^m \psiPR(\piccola_k)} ~ \otimes_{i=1}^n \lambda (d\grande_i) \, \otimes_{k=1}^m d\piccola_k ,
\end{equation}
where $\bx=\{\grande_1,\dots,\grande_n,\piccola_1,\dots,\piccola_m\}$,
is reversible for the dynamics \eqref{eq:SnmR}.
\end{proposition}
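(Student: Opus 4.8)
The plan is to establish the two assertions separately: first the pathwise well-posedness of \eqref{eq:SnmR}, then the reversibility of $\nu^{m,R}$.

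\emph{Well-posedness.} I would read \eqref{eq:SnmR} as a single It\^o equation with reflection in the closed set $\D_m$, regarded as a subset of $\R^{d(n+m)}$ through the labelled coordinates $\bx=(\grande_1,\dots,\grande_n,\piccola_1,\dots,\piccola_m)$. Its drift $\bx\mapsto\big(-\tfrac12\nabla\psiG(\grande_i)\big)_{i}\oplus\big(-\tfrac{\sigmaP^2}{2}\nabla\psiPR(\piccola_k)\big)_{k}$ is bounded and globally Lipschitz by the standing hypotheses on $\psiG$ and $\psiPR$, the diffusion matrix is constant and block-diagonal (the particle block scaled by $\sigmaP$), and the reflection term, at a boundary configuration where the collisions indexed by a set $A$ are active, is a nonnegative combination $\sum_{\alpha\in A}\gamma_\alpha\,\eta_\alpha$ of the reflection vectors $\eta_\alpha$ carried by the active constraints. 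After rescaling the particle coordinates by $\sigmaP^{-1}$ the equation becomes a standard Brownian motion with gradient drift and \emph{normal} reflection on a domain which is again the complement of a finite union of convex sets. I would then verify, along the lines of \cite{FR2} but now in the two-size setting, the two geometric conditions making the classical theory of reflected SDEs in non-smooth domains (Tanaka, Lions--Sznitman, Saisho) applicable: (i) a uniform exterior sphere condition, immediate since each forbidden region $\{|\grande_i-\grande_j|<2\rgrande\}$ and $\{|\grande_i-\piccola_k|<\rdep\}$ is convex, so $\D_m$ admits an exterior tangent ball of any radius at every boundary point; and (ii) the compatibility of the active reflection directions, for which the key point is that the radial field $\bx\mapsto\bx$ is a uniform \emph{decompression} direction: on $\{g_{ij}=0\}$, where $g_{ij}(\bx)=|\grande_i-\grande_j|^2-4\rgrande^2$, one has $\langle\bx,\nabla g_{ij}(\bx)\rangle=2|\grande_i-\grande_j|^2=8\rgrande^2>0$, and likewise $\langle\bx,\nabla g_{ik}(\bx)\rangle=2\rdep^2>0$ on $\{g_{ik}=0\}$, so $\bx$ makes a strictly positive angle with every active inward normal. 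Note that (ii) holds for \emph{any} radii, so the convention $\rho<1$ plays no role here; after centring at the barycentre the decompression field stays bounded on bounded sets, which is all one needs once a priori second-moment estimates — obtained from a Lyapunov argument based on the confining potentials $\psiG,\psiPR$, as in \cite{FR2} — rule out explosion on $[0,1]$ and legitimate the localisation by exit times of large balls. The cited theory then gives, for every initial condition in the interior of $\D_m$, a unique $\D_m$-valued strong solution $\big(\Grande^{m,R}_i,\Piccola^{m,R}_k,L^{m,R}_{ij},\ell^{m,R}_{ik}\big)$.

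\emph{Reversibility: the structure.} First, $\nu^{m,R}$ is finite: $\otimes_{i=1}^n\lambda(d\grande_i)$ is a probability measure, $\int_{\R^d}\e^{-\psiPR(x)}\,dx=\vol{B(0,R)}+\int_{B(0,R)^c}\e^{-\psiPR(x)}\,dx<\infty$ by \eqref{eq:psiPR}, and $\un_{\D_m}\le1$; write $\bar\nu^{m,R}$ for the associated probability measure. With $H(\bx)\defeq\sum_{i=1}^n\psiG(\grande_i)+\sum_{k=1}^m\psiPR(\piccola_k)$ one has exactly $\nu^{m,R}(d\bx)=\un_{\D_m}(\bx)\,\e^{-H(\bx)}\,d\bx$. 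The crucial observation is that \eqref{eq:SnmR} is a \emph{gradient} diffusion with \emph{conormal} reflection for the density $\e^{-H}$: writing $a\defeq\mathrm{diag}(I_{dn},\sigmaP^2I_{dm})$ for the square of the diffusion matrix, the drift equals $-\tfrac12\,a\,\nabla H$, and each reflection vector $\eta_\alpha$ equals $\tfrac12\,a\,\nabla g_\alpha$, i.e.\ the conormal of the $\alpha$-th constraint for the coefficient matrix $a$. Consequently \eqref{eq:SnmR} is the symmetric reflecting diffusion generated on $L^2(\bar\nu^{m,R})$ by the Dirichlet form
\begin{equation*}
 \nrg(f,g)=\tfrac12\int_{\D_m}\Big(\sum_{i=1}^n\nabla_{\grande_i}f\cdot\nabla_{\grande_i}g+\sigmaP^2\sum_{k=1}^m\nabla_{\piccola_k}f\cdot\nabla_{\piccola_k}g\Big)\,d\bar\nu^{m,R},
\end{equation*}
the local-time terms in \eqref{eq:SnmR} being exactly the boundary part enforcing the Neumann (conormal) condition. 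Since $\nrg$ is symmetric, the associated transition semigroup is self-adjoint on $L^2(\bar\nu^{m,R})$, which is the reversibility of $\nu^{m,R}$. To turn this into a proof I see three equivalent routes: (a) identify, via a Fukushima-type decomposition, the solution of the first part with the reflecting distorted Brownian motion associated with $\nrg$, for which one needs $(\nrg,\C^\infty(\overline{\D_m}))$ to be closable and regular — a standard fact for domains that are locally intersections of smooth convex-complemented pieces; (b) argue directly, applying It\^o's formula to $f(\bX_t),g(\bX_t)$ for $f,g\in\C^2(\overline{\D_m})$ with vanishing conormal derivative on $\partial\D_m$, taking expectations under $\bar\nu^{m,R}$ and integrating by parts via Green's formula, the boundary integrals cancelling precisely because $\eta_\alpha$ is the conormal, to obtain $\int(Lf)\,g\,d\bar\nu^{m,R}=\int f\,(Lg)\,d\bar\nu^{m,R}$ for the generator $L$; (c) a time-reversal argument, closest to \cite{FR2}: for the stationary solution $\bX$, the process $t\mapsto\bX_{1-t}$ again solves \eqref{eq:SnmR} — with time-reversed Brownian motions and reflecting local times — because for a gradient diffusion the reversed drift is $-b+a\,\nabla\log(\text{density})=-b-a\,\nabla H=b$ and the reflecting local times are invariant under time reversal, so the uniqueness of the first part forces equality in law.

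\emph{Main difficulty.} The genuinely new point, compared with the one-size model of \cite{FR2}, is the geometry of the two-size admissible domain $\D_m$: one must rule out that a configuration with several simultaneous large--large and large--small contacts — a small particle wedged between two touching large spheres being the scenario to keep in mind — produces an incompatible set of reflection directions. The radial decompression field settles this cleanly and, pleasantly, uniformly in the sphere sizes; the price to pay is the non-compactness of $\D_m$, hence the localisation and non-explosion bookkeeping. A secondary technical point is that time reversal of a \emph{reflected} diffusion is not entirely off-the-shelf, which is why I would rather anchor the reversibility proof on the symmetry of the Dirichlet form $\nrg$, i.e.\ on route (a) or (b).
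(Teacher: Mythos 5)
Your overall architecture is sound and genuinely different from the paper's: you verify the Saisho/Lions--Sznitman conditions (uniform exterior sphere plus compatibility of reflection directions) directly, whereas the paper invokes the first author's framework from \cite{MultipleConstraint}, which asks for (i) uniform non-degeneracy and boundedness of the derivatives of the constraint functions $\Gamma_{ij},\gamma_{ik}$ and (ii) a compatibility vector $\bv$ with $\bv\cdot\nabla\gamma\ge b_0|\bv|\,|\nabla\gamma|$ for a \emph{uniform} $b_0>0$. Your identification of the local-time terms as conormals for $a=\mathrm{diag}(I_{nd},\sigmaP^2 I_{md})$ is correct and is exactly what makes the reversibility of $\un_{\D_m}\e^{-H}d\bx$ work; the paper handles that step by citing \cite[Theorem 2.5]{MultipleConstraint} and \cite{ST87}, so your routes (a)/(b) are an acceptable substitute.

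The genuine gap is in your compatibility argument. The inner products $\langle\bx,\nabla g_{ij}(\bx)\rangle=8\rgrande^2$ and $\langle\bx,\nabla g_{ik}(\bx)\rangle=2\rdep^2$ are correct, but the compatibility condition requires a \emph{unit} vector making a uniformly positive angle with every active normal, and $\langle\bx/|\bx|,\nabla g_{ij}/|\nabla g_{ij}|\rangle\to0$ as $|\bx|\to\infty$ since the domain $\D_m$ is unbounded. Centring at the \emph{global} barycentre does not repair this: the constraint gradients are translation-invariant, so the inner products are unchanged, but $|\bv|$ is then of the order of the diameter of the whole configuration, which is unbounded when the configuration splits into widely separated contact clusters — precisely the generic situation. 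The paper's fix is to centre each sphere/particle at the centre of mass of its own \emph{contact cluster}: this keeps the same inner products on each active constraint (both endpoints of an active contact lie in the same cluster, so their cluster barycentres coincide) while forcing $|\bv|^2\le4\rgrande^2(n+m)^3$, hence a uniform $b_0=\rdep/(2\sqrt2\,\rgrande(n+m)^{3/2})$. Your fallback — localising by exit times from large balls with a ball-dependent $b_0$ — is not off-the-shelf, because $b_0$ enters the a priori estimates for the Skorokhod problem globally; you would have to re-derive the existence theory with a merely locally uniform constant, which is a nontrivial extra step the cluster construction avoids. I would replace the radial field by the cluster-wise centred field and the argument closes.
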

\begin{remark} \label{rem:psiPR}
Due to the assumptions \eqref{eq:psiPR} satisfied by the confining potential $\psiPR$, the measure $\nu^{m,R}$ is mainly supported on configurations whose $m$ particles are in or close to the ball $B(0,R)$. Moreover, the integral $\int_{\R^d} \e^{-\psiPR(x)} dx $  is finite and increases at most polynomially in $R$ when $R$ tends to infinity.  
\end{remark}

\begin{proof}[Proof of Proposition \ref{prop:existencesolrevSnmR}]
The system \eqref{eq:SnmR} describes the dynamics of an $(n+m)d$-di\-men\-sio\-nal gradient diffusion with reflection at the boundary of the domain $\D_m$. The different size of the hard spheres and the particles induces a new geometric complexity which did not exist in the case of identical spheres studied in~\cite{FR2}.
\smallbreak
In \cite{MultipleConstraint}, the first author solved the question of existence and uniqueness of a reflected diffusion in a geometric domain whose boundary is induced by several constraints. There, the required assumptions are (i) a regularity condition on each constraint; and (ii) a so-called {\em compatibility} condition between the constraints. We check here that the domain $\D_m \subset \M_{m} $ satisfies such properties, as stated in \cite[Definition 2.1]{MultipleConstraint}.
\smallbreak

The interior of the domain $\D_m$ can be described as the following intersection of sets:
\begin{equation*}
    \textrm{int}(\D_m) = 
    \bigcap_{\substack{1\le i,j \le n\\
    j\not=i}}
    \Big\{\bx\in \M_m : \, \Gamma_{ij}(\bx) >0 \Big\} 
    \cap
    \bigcap_{\substack{1\le i \le n\\
     1 \leq k\leq m}}
     \Big\{ \bx\in \M_m: \, \gamma_{ik}(\bx) >0 \, \Big\} . 
\end{equation*}
The ${\mathcal C}^2$ constraint function 
\begin{equation}\label{def:Gammaij}
\disp \Gamma_{ij}(\bx)= \Gamma_{ij}(\bgrande)\defeq \frac{|\grande_i - \grande_j|^2}{ 4 \rgrande\rspace ^2} - 1
\end{equation}
controls the distance between the hard spheres $i$ and $j$, 
whereas the ${\mathcal C}^2$ constraint function 
\begin{equation*}
    \gamma_{ik}(\bx) \defeq \frac{|\grande_i - \piccola_k|^2 }{\rdep\rspace^2}-1
\end{equation*}
controls the distance between the hard sphere $i$ and the particle $k$. The boundary of $\D_m$, then, is a union of smooth boundaries, each one being the set of zeros of one constraint.\\[2mm]
\noindent
(i) \emph{Boundedness of first and second derivatives of the constraint functions.}

We first prove that the norm of the gradient of each constraint function is uniformly bounded from below on its induced boundary. 

It is straightforward to check that, for $\bx=\{\grande_1,\cdots,\grande_n,\piccola_1,\cdots,\piccola_m\}$, 
\begin{itemize}
    \item if $\Gamma_{ij}(\bx)=0$, i.e., $|\grande_i - \grande_j|^2 = 4\,\rgrande\rspace ^2$, then   
$\disp |\nabla \Gamma_{ij}(\bx)|^2 = 8\,|\grande_i - \grande_j|^2 /16 \rgrande\rspace ^4= 2/\rgrande\rspace ^2 >0$;
    \item if $\gamma_{ik}(\bx)=0$, i.e., $|\grande_i - \piccola_k|^2 = \rdep\rspace^2$ then  
$\disp |\nabla \gamma_{ik}(\bx)|^2 = 8/\rdep\rspace ^2 >0.$
\end{itemize}
Second derivatives are uniformly bounded from above because they are constant. \\[2mm]
\noindent
(ii) \emph{Compatibility between the constraints.}

We are now looking for a positive constant $b_0$ such that, at any point $\bx$ of the $ij$-boundary (resp.~$ik$-boundary) of $\D_m$,  there exists a non-zero vector $\bv\in (\R^d)^{n+m}$, such that
\begin{equation*} 
   \bv\cdot\nabla \Gamma_{ij}(\bx) \ge b_0 \,|\bv|~|\nabla \Gamma_{ij}(\bx)| \quad 
   (\textrm{resp.~} \bv\cdot\nabla \gamma_{ik}(\bx) \ge b_0 \,|\bv|~|\nabla \gamma_{ik}(\bx)|) .
\end{equation*}
More precisely, if a configuration $\bx=\bgrande\bpiccola$ belongs, e.g., to the $ij$-boundary, the hard spheres $i$ and $j$ collide. Heuristically, the vector $\bv$ indicates the most effective impulse for the configuration to come back into the interior of the domain $\D_m$, i.e., for the colliding spheres to get away from each other as fast as possible. The compatibility condition requires that the maximum angle between all these impulses, which is equal to $2\arccos{b_0}$, remains bounded away from $\pi$.

Fix $\bx \in\partial\D_m$, and let $C(\grande_i)$ be the  cluster around the $i$-th sphere $\grande_i$ (resp.~$C(\piccola_k)$ the cluster around the $k$-th particle $\piccola_k$), that is, the set of all spheres or particles of $\bx$ either touching $\grande_i$ or belonging to a chain of spheres and/or particles in contact including $\grande_i$ (similarly for $\piccola_k$). We define the centre of mass of such clusters by 
\begin{equation*}
\cm (\grande_i) \defeq \frac{1}{\# C(\grande_i)} \sum_{x_j\in C(\grande_i)} x_j 
\quad \textrm{ and } \quad
\cm (\piccola_k) \defeq \frac{1}{\# C(\piccola_k)} \sum_{x_j\in C(\piccola_k)} x_j
\end{equation*}
and the vector $\bv=\bgrandev\bpiccolav \in \R^{dn} \times \R^{dm}$ by
\begin{equation*}
\grandev_i\defeq\grande_i- \cm (\grande_i) 
\quad \textrm{ and } \quad 
\piccolav_k \defeq \piccola_k- \cm (\piccola_k).
\end{equation*}
It is not difficult to show that 
\begin{equation*}
\Gamma_{ij}(\bx)=0 \ \Rightarrow \ \bv\cdot\frac{\nabla \Gamma_{ij}(\bx)}{|\nabla \Gamma_{ij}(\bx)|}= \sqrt{2}\rgrande \quad
\textrm{ and }
\quad \gamma_{ik}(\bx)=0 \ \Rightarrow \ \bv\cdot\frac{\nabla \gamma_{ik}(\bx)}{|\nabla \gamma_{ik}(\bx)|}=\frac{\rdep}{\sqrt{2}}.
\end{equation*}
Moreover, $|\bv|^2 \le 4 \rgrande\rspace ^2(n+m)^3$. Therefore, one can choose
$ b_0 \defeq \displaystyle\frac{\rdep}{2\sqrt{2}~ \rgrande (n+m)^{3/2}} >0.$

Note that $b_0$ vanishes as $m$ tends to infinity. 
Therefore, this method cannot be applied to prove existence in the case $m=+\infty$.

\smallbreak
Having proved that the constraints defining the domain $\D_m$ are compatible in the sense of \cite{MultipleConstraint}, we can now apply Theorem 2.2 therein, yielding the existence and uniqueness, for each initial admissible configuration, of a strong solution to the SDE with reflection \eqref{eq:SnmR}. 

Its diffusion matrix is the $\big(d(n+m) \times d(n+m)\big)$-block matrix of diagonal matrices 
$\begin{pmatrix} \sigmaG I_{nd} &   0   \\
                                    0    & \sigmaP I_{md} \end{pmatrix} $ 
and its drift is given by the gradient of the potential function \mbox{$ \Phi(\bx) 
       \defeq \sum_{i=1}^n \psiG(\grande_i) + \sum_{k=1}^m \psiPR(\piccola_k) $}.

Applying \cite[Theorem 2.5]{MultipleConstraint} (see also \cite{ST87}), we get that $\un_{\D_m}(\bx) \e^{-\Phi(\bx)} d\bx$ is a time-reversible measure for the dynamics \eqref{eq:SnmR}. This concludes the proof of the proposition.
\end{proof}
%%%%%%%%%%%%%%%%%%%%%%%%%%%%%%%%%%%%%%%%%%%%%%%%%%%%%%%%%%%%
\medbreak

\noindent 
{\bf Step 2:} \emph{Localisation of the initial particles.}
%%%%%%%%%%%%%%%%%%%%%%%%%%%%%%%%%%%%%%%%%%%%%%%%%%%%%%%%%%%%

We consider again the finite-dimensional dynamics \eqref{eq:SnmR}, and fit the number $m$ of particles to the confinement parameter $R$ in the following way.
Let $\bx = \bgrande\bpiccola \in \D$ be an admissible configuration; we define the finite-dimensional process  
\begin{equation} \label{eq:XxR}
X^{\bx,R} = \left( \Grande_i^{\bx,R}(t), \Piccola_k^{\bx,R}(t),  L_{ij}^{\bx,R}(t), \ell_{ik}^{\bx,R}(t) 
                   \right)_{t\in [0,1],\, 1\le i,j \le n,\, 1\le k\le m} 
\end{equation} 
as the solution of the SDE \eqref{eq:SnmR} with initial condition $\bgrande$ for the $n$ hard spheres and $\bpiccolaR$ for the particles, where $\bpiccolaR$ denotes the subset of particles in $\bpiccola$ which belong to the ball $B(0,R)$. 
Therefore, the corresponding dimension $m=m(\bpiccola,R)$ is equal to the finite number $\# \bpiccolaR$ of particles.

For any continuous function $f\colon [0,1]\to\R^d$, let $w$ be its \emph{modulus of continuity}, that is, for any $\delta>0$, $w(f,\delta)\defeq\sup\{|f(t)-f(s)|, \,|t-s|<\delta\}$.  We say that a continuous path is \emph{nice} if it stays away from the origin, or if its modulus of continuity $w$ is bounded. More precisely, for any $\alpha,\delta,\eps >0$, we define the set of \emph{$(\alpha,\delta,\eps)$-nice paths} as
\begin{equation}\label{DefNicepath}
\NicePath(\alpha,\delta,\eps)
    \defeq \left\{ f\colon [0,1]\to\R^d \text{ continuous s.t.~} 
               \min_{s\in[0,1]}|f(s)| > \alpha \text{  or } w(f,\delta)\leq \eps \right\}. 
\end{equation}
Finally, for $\bx \in \D$, we define an event $\Omega_\bx\subset\Omega$, on which we will be able to construct the solution of the infinite dimensional two-type dynamics \eqref{eq:Sninfty}, as follows: 

\begin{equation}\label{eq:Omegax}
\begin{aligned}
    \Omega_\bx \defeq  \Big\{ \omega\in\Omega :\, ~ &
        \forall\Rgrande\in\N^*\, ~ \exists \Rpiccola\in\N^*\, ~ \forall R\ge \Rpiccola ,\ \\ 
    &\forall i\le n,\, ~  \Grande_i^{\bx,R}(\omega)\in\NicePath(\Rgrande+\sqrt{R},\frac{1}{\sqrt{R}},1), \\ 
    &\forall k\le m(\bpiccola,R),\, ~ \Piccola_k^{\bx,R}(\omega)\in\NicePath(\Rgrande+\rdep+\sqrt{R},\frac{1}{\sqrt{R}},1), \\
    &\forall i\le n,\, ~ \Grande_i^{\bx,R+1}(\omega)\in\NicePath(\Rgrande+\sqrt{R},\frac{1}{\sqrt{R}},1), \\
    &\forall k\le m(\bpiccola,R+1),\, ~ \Piccola_k^{\bx,R+1}(\omega)\in\NicePath(\Rgrande+\rdep+\sqrt{R},\frac{1}{\sqrt{R}},1) \Big\}.
\end{aligned}
\end{equation}
%%%%%%%%%%%%%%%%%%%%%%%%%%%%%%%%%%%%%%%%%%%%%%%%%%%%%%%%%%%%%%%%%%%%%%%%%%%%%%%%%%
\smallbreak

\noindent
{\bf Step 3:} \emph{Convergence on $\Omega_\bx\subset \Omega$ of the approximating processes.}
\smallbreak
%%%%%%%%%%%%%%%%%%%%%%%%%%%%%%%%%%%%%%%%%%%%%%%%%%%%%%%%%%%%%%%%%%%%%%%%%%%%%%%%%%

In the proposition below, we show that, for any fixed admissible initial configuration $\bx$ and $\omega\in\Omega_\bx$ (which fixes the Brownian paths), the trajectories of a hard sphere (or a particle) following the dynamics (\ref{eq:SnmR}$)_R$ are the same as soon as $R$ is large enough. This is due to the fact that the confining function $\psiPR$ vanishes on an increasingly large area $B(0,R)$ around the origin. Therefore, the path sequence indexed by $R$ converges.
Note that this convergence holds separately for each sphere and each particle, not for the process as a whole.

\begin{proposition}\label{PropStationnarite}
Fix $\bx$ in $\D$. The sequence of paths $\big(X^{\bx,R}(\omega),\omega\in\Omega_\bx\big)_{R \in \N^*}$ defined in \eqref{eq:XxR} converges to a 
limit process denoted by
$\disp X^{\bx} \defeq \left( \Grande_i^{\bx}, \Piccola_k^{\bx}, L_{ij}^{\bx}, \ell_{ik}^{\bx} \right)_{1\le i,j \le n,\, k\geq 1}$. 
This process is solution on $\Omega_\bx$ of the infinite-dimensional equation \eqref{eq:Sninfty} with initial configuration $\bx$.
\end{proposition}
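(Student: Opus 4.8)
The idea is to combine the finite range of the interactions in \eqref{eq:Sninfty}--\eqref{eq:SnmR} with the nice-path control built into $\Omega_\bx$ to show that, on this event, each coordinate of the family $\big(X^{\bx,R}\big)_{R}$ is \emph{eventually constant} in $R$, and then to pass to the limit in \eqref{eq:SnmR}. Set $a\defeq\max_{1\le i\le n}|\grande_i|$, fix an integer $\Rgrande>a+1$, and let $\Rpiccola=\Rpiccola(\omega)$ be the threshold attached to this $\Rgrande$ in \eqref{eq:Omegax}, enlarged if necessary so that $R-2\sqrt R>a+\rdep+2$ for all $R\ge\Rpiccola$. The structural fact used repeatedly is that in \eqref{eq:SnmR} (as in \eqref{eq:Sninfty}) a particle reflects only against hard spheres, never against another particle; hence, for any solution, the $n$ hard spheres together with the particles carrying a non-trivial local time $\ell_{\cdot k}$ — the \emph{active} particles — form a closed subsystem, while every other particle is a decoupled confined Brownian motion that influences nothing.

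First I would record what $\omega\in\Omega_\bx$ gives for $R\ge\Rpiccola$. Since $|\Grande_i^{\bx,R}(0)|=|\grande_i|\le a<\Rgrande+\sqrt R$, the hard-sphere paths of $X^{\bx,R}$ and of $X^{\bx,R+1}$ cannot satisfy the first alternative in \eqref{DefNicepath}, hence have modulus of continuity $\le 1$ at scale $1/\sqrt R$; cutting $[0,1]$ into at most $\lceil\sqrt R\rceil$ such intervals, every hard sphere stays in $B(0,a+\sqrt R+1)\subset B(0,R)$. The same dichotomy applied to a particle that ever comes within $\rdep$ of a hard sphere — so that it enters $B(0,a+\rdep+\sqrt R+1)\subsetneq B(0,\Rgrande+\rdep+\sqrt R)$ — puts it in the bounded-modulus alternative, so it moves by at most $\sqrt R+1$ and therefore starts, and remains, inside $B(0,a+\rdep+2\sqrt R+2)\subset B(0,R)$. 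Two consequences: active particles never feel the confining potentials (both $\psiPR$ and its level-$(R+1)$ analogue vanish on $B(0,R)$); and any particle starting at distance $>R$ from the origin — in particular any particle of $X^{\bx,R+1}$ absent from $X^{\bx,R}$, and any particle $k$ with $|\piccola_k|>R$ — is inactive in $X^{\bx,R+1}$, these claims using $R-2\sqrt R>a+\rdep+2$.

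Next comes the comparison of consecutive approximations. By the previous paragraph the extra particles of $X^{\bx,R+1}$ are inactive, so the restriction of $X^{\bx,R+1}$ to the $n$ hard spheres and the $m(\bpiccola,R)$ particles it shares with $X^{\bx,R}$ solves the same system \eqref{eq:SnmR}, except that the particle confinement is the level-$(R+1)$ potential instead of $\psiPR$ — and these two agree on $B(0,R)$. Running both processes until the first time a shared particle reaches distance $R$ from the origin, the equations are literally identical and pathwise uniqueness for \eqref{eq:SnmR} (Proposition~\ref{prop:existencesolrevSnmR}; the uniqueness is local, hence applies also at stopping times) forces the two to coincide up to that time; a shared particle that does reach distance $R$ is inactive by the previous step, hence a decoupled confined Brownian motion that may be deleted from both processes without altering the equations of the remaining coordinates. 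Peeling off, one after another, the finitely many escaping shared particles, one obtains that $X^{\bx,R}$ and $X^{\bx,R+1}$ have identical hard-sphere paths $\Grande_i$, identical local times $L_{ij}$ and $\ell_{ik}$ ($k\le m(\bpiccola,R)$), and identical trajectories for every active particle. In particular the active set is non-increasing in $R\ge\Rpiccola$; being finite, it equals a fixed finite set $A^\bx$ for $R$ large, and then $\Grande_i^{\bx,R}$, $L_{ij}^{\bx,R}$, $\ell_{ik}^{\bx,R}$ and $\Piccola_k^{\bx,R}$ ($k\in A^\bx$) are all constant in $R$. For $k\notin A^\bx$ the particle is inactive for $R$ large, hence equals $\piccola_k+\sigmaP\WPiccola_k$ once $R>|\piccola_k|+\sigmaP\sup_{t\in[0,1]}|\WPiccola_k(t)|$, so it stabilises as well. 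I then define $X^\bx$ to be the resulting limit; eventual constancy of each coordinate yields uniform convergence on $[0,1]$.

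It remains to check that $X^\bx$ solves \eqref{eq:Sninfty}. Admissibility $X^\bx(t)\in\D$, the symmetry relations $L_{ij}^\bx\equiv L_{ji}^\bx$ and $\ell_{ik}^\bx\equiv\ell_{ki}^\bx$, and the support identities for the local times pass coordinatewise to the limit from \eqref{eq:SnmR}, and local finiteness of $\bx$ makes $X^\bx(t)$ a Radon point measure. In the $i$-th hard-sphere equation of \eqref{eq:SnmR}, for $R\ge\Rpiccola$ the drift and the finite sum over $j\le n$ already equal their analogues for $X^\bx$, while $\sum_{k\le m(\bpiccola,R)}\int_0^t(\Grande_i-\Piccola_k)(s)\,d\ell_{ik}(s)=\sum_{k\in A^\bx}\int_0^t(\Grande_i^\bx-\Piccola_k^\bx)(s)\,d\ell_{ik}^\bx(s)$ for $R$ large, which is precisely the series $\sum_{k\ge1}$ of \eqref{eq:Sninfty}; in the $k$-th particle equation the confinement term $\int_0^t\nabla\psiPR(\Piccola_k(s))\,ds$ is identically zero for $R$ large, leaving exactly the particle equation of \eqref{eq:Sninfty}. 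Hence $X^\bx$ is a strong $\D$-valued solution of \eqref{eq:Sninfty} with initial configuration $\bx$ on $\Omega_\bx$. The main difficulty — and the reason the event $\Omega_\bx$ is built the way it is — is that the \emph{whole} processes $X^{\bx,R}$ do not stabilise: the far-away particles and the two confinements genuinely differ between levels $R$ and $R+1$. The argument goes through only because the $n$ hard spheres form a separating core of the interaction graph (special to a bath with no particle--particle forces) and the nice-path bounds keep this core, together with the particles touching it, away from the region where the discrepancies live; the self-referential definition of an active particle is then resolved by the stopping-time peeling above, in the spirit of \cite{FR2}.
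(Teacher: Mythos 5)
Your proof is correct and follows essentially the same route as the paper's: the nice-path conditions in $\Omega_\bx$ confine the hard spheres and every particle that ever touches one to $B(0,R)$, where $\psiPR$ and $\dot\psi^{R+1}$ vanish, so consecutive approximations agree by the strong uniqueness of Proposition~\ref{prop:existencesolrevSnmR}, each coordinate is eventually constant in $R$, and the limit satisfies \eqref{eq:Sninfty} term by term. The only place you go beyond the paper is the explicit active/inactive dichotomy and the stopping-time peeling used to compare $X^{\bx,R}$ with $X^{\bx,R+1}$, which have different particle counts --- a point the paper passes over by directly asserting that the two dynamics coincide and invoking uniqueness; your version makes that comparison rigorous without changing the underlying idea.
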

%%%%%%%%%%%%%%%%%%%%%%%%%%%%%%%%%%%%%%%%%%%%%%%%%%%%%%%%%%%%%%%%%%%%%%%%%%%%%%%%%%
\begin{proof} 
%[Proof of Proposition \ref{PropStationnarite}]%%%%%%%%%%%%%%%%%%%%%%%%%%%%%%%%%%%%%%%%%%%%%%%%%%%%%%%%%%%%%%%%
Fix $\bx=\bgrande\bpiccola \in \D$ and $\omega\in\Omega_\bx$. 

We first prove that, for fixed $1\leq i,j \leq n $ and $k\in\N^*$, the four path  sequences 
\begin{equation*}
    \big( \Grande_i^{\bx,R}(\omega) \big)_R\, ,\, \big( \Piccola_k^{\bx,R}(\omega)\big)_R\, ,\, \big( L_{ij}^{\bx,R}(\omega) \big)_R\, ,\, \big( \ell_{ik}^{\bx,R}(\omega) \big)_R
\end{equation*}
are eventually constant. 
Choose $\Rgrande\defeq\lceil \max\{ |\grande_i|, 1\le i \le n \}\rceil$ , where $\lceil z\rceil$ denotes the smallest integer larger than $z\in\R$.
The initial position $\grande_i$ of the $i$-th hard-sphere centre belongs therefore to $B(0,\Rgrande)$.

Since both paths $\Grande_i^{\bx,R}(\omega)$ and $\Grande_i^{\bx,R+1}(\omega)$ belong to the set $\NicePath(\Rgrande+\sqrt{R},1/\sqrt{R},1)$, then
\begin{equation*}
    w\big(\Grande_i^{\bx,R}(\omega),1/\sqrt{R}\big) \le 1 \text{ as soon as } \min_{t\in [0,1]}|\Grande_i^{\bx,R}(\omega,t)| \le \Rgrande+\sqrt{R},
\end{equation*}
and the same holds for $\Grande_i^{\bx,R+1}(\omega)$.
   
Moreover, since a path with $\delta$-modulus of continuity bounded by $\eps$ started in $B(0,\alpha)$ remains in $B(0,\alpha+ \frac{\eps}{\delta})$ for the time interval $[0,1]$ then, taking $\alpha = \Rgrande, \delta = 1/\sqrt{R} $ and $\eps =1$, then
\begin{equation*}
    \max_{t \in [0,1]}|\Grande_i^{\bx,R}(\omega,t)| \le \Rgrande+\sqrt{R} \text{ as soon as } \min_{t\in [0,1]}|\Grande_i^{\bx,R}(\omega,t)| \le \Rgrande .
\end{equation*}
The latter is verified by definition as
\begin{equation*}
    \min_{t\in [0,1]}|\Grande_i^{\bx,R}(\omega,t)| \le |\Grande_i^{\bx,R}(\omega,0)| \le \Rgrande.    
\end{equation*}
The same argument holds for $\Grande_i^{\bx,R+1}(\omega)$.

This implies in particular that every particle $\Piccola_k^{\bx,R}(\omega)$ which collides with some hard sphere $\Grande_i^{\bx,R}(\omega)$ belongs to $B(0,\Rgrande+\rdep + \sqrt{R})$ at the time of the collision. 

Since the path $\Piccola_k^{\bx,R}(\omega)$ belongs to $\NicePath(\Rgrande+\rdep+\sqrt{R},1/\sqrt{R},1)$, if it collides with some hard sphere, its $(1/\sqrt{R})$-modulus of continuity is bounded by $1$.
The same argument holds for  $\Piccola_k^{\bx,R+1}(\omega)$.
So, as particles that collide with a hard sphere at some time $t \in [0,1]$ cannot cover more than a distance $\sqrt{R}$ in the time interval $[0,1]$, their paths stay in the ball $B(0,\Rgrande+\rdep+2\sqrt{R})$.

As a consequence, for $R$ large enough in the sense that $R \geq \Rgrande+\rdep+2\sqrt{R}$ 
(this holds as soon as $R\geq \underline{R}\defeq4 + 2(\Rgrande + \rdep)$),
the hard spheres and the particles that visit  $B(0,\Rgrande+\rdep+\sqrt{R})$ stay in a region where the self-potentials $\psiPR$ and $\dot{\psi}^{R+1}$ vanish. 
That is, the $k$-th-particle dynamics computed at $\omega$ in \eqref{eq:SnmR} does not feel $\psiPR$ if it collides with hard spheres or if it starts in $B(0,\Rgrande+\rdep+\sqrt{R})$. 
Consequently, the sphere dynamics computed at $\omega$ in both equations \eqref{eq:SnmR} and $({\mathfrak S}_{m,R+1})$ coincide when $R \geq \underline{R}$, and the $k$-th-particle dynamics computed at $\omega$ coincide as soon as $\piccola_k \in B(0,\Rgrande+\rdep+\sqrt{R})$. 

The strong uniqueness in Theorem 2.2 of \cite{MultipleConstraint} allows us to deduce the existence of paths $\Grande_i^{\bx}(\omega)$,  $\Piccola_k^{\bx}(\omega)$, $L_{ij}^{\bx}(\omega)$, and $\ell_{ik}^{\bx}(\omega)$ such that
\begin{align*} 
    &\forall R \geq \underline{R}, \ \forall 1\le i,j \le n, \ \forall k \text{ such that }\piccola_k \in B(0,\Rgrande+\rdep+\sqrt{R}), \ \forall t\in[0,1], \\
    &\Grande_i^{\bx,R}(\omega,t) = \Grande_i^{\bx}(\omega,t), \, \Piccola_k^{\bx,R}(\omega,t) =  \Piccola_k^{\bx}(\omega,t), \, L_{ij}^{\bx,R}(\omega,t) = L_{ij}^{\bx}(\omega,t), \, \ell_{ik}^{\bx,R}(\omega,t) = \ell_{ik}^{\bx}(\omega,t).
\end{align*} 
By construction then, these paths satisfy the following SDE:
\begin{equation*}
\begin{cases}
\textrm{for } i, j \in \{1,\dots,n\}, \,   k \in \N^*,\,  t \in [0,1] ,                             \\ 
\disp \Grande_i^\bx(\omega,t) 
    = \grande_i +\sigmaG\, \WGrande_i(\omega,t) -\frac{1}{2} \sigmaG \int_0^t \nabla\psiG \big(\Grande_i^\bx(\omega,s)\big)\, d s \\ 
    \disp \phantom{\Grande_i^\bx(\omega,t) = \Grande_i +\sigmaG\, W_i(\omega,t) -}
    +\sum_{j=1}^n \int_0^t \big(\Grande_i^\bx(\omega,s)-\Grande_j^\bx(\omega,s)\big) \,d L_{ij}^\bx(\omega,s) \,\\
    \disp \phantom{\Grande_i^\bx(\omega,t) = \Grande_i +\sigmaG\, W_i(\omega,t) -}
    +\sum_{k=1}^{+\infty} \int_0^t \big(\Grande_i^\bx(\omega,s)-\Piccola_{k}^\bx(\omega,s)\big)\, d \ell_{ik}^\bx(\omega,s) , \\ 
    \disp \Piccola_k^\bx(\omega,t) = \piccola_k +\sigmaP\, \WPiccola_k(\omega,t) +\sigmaP^2 \sum_{i=1}^n \int_0^t \big(\Piccola_k^\bx(\omega,s) - \Grande_i^\bx(\omega,s)\big)\,  d \ell_{ki}^\bx(\omega,s) , \\ 
    \disp L_{ij}^\bx(\omega,0) = 0 , \quad L_{ij}^\bx \equiv L_{ji}^\bx, \quad L_{ij}^\bx(\omega,t) = \int_0^t \un_{|\Grande_i^\bx(\omega,s)-\Grande_j^\bx(\omega,s)|=2\rgrande} \, d L_{ij}^\bx(\omega,s), \quad L_{ii}^\bx \equiv 0 , \\ 
    \disp \ell_{ik}^\bx(\omega,0) = 0 , \quad \ell_{ik}^\bx \equiv \ell_{ki}^\bx, \quad \ell_{ik}^\bx(\omega,t) = \int_0^t \un_{|\Grande_i^\bx(\omega,s)-\Piccola_k^\bx(\omega,s)|=\rdep} \, d \ell_{ik}^\bx(\omega,s), \quad \ell_{ii}^\bx \equiv 0 .
\end{cases}
\end{equation*}
This concludes the proof.
\end{proof}%%%%%%%%%%%%%%%%%%%%%%%%%%%%%%%%%%%%%%%%%%%%%%%%%%%%%%%%%%%%%%%%
%%%%%%%%%%%%%%%%%%%%%%%%%%%%%%%%%%%%%%%%%%%%%%%%%%%%%%%%%%%%%%%%%%%%%%%%%%%%%%%%%%
\smallskip
\noindent
{\bf Step 4:} \emph{The constructed solution of \eqref{eq:Sninfty} is defined on a full subset of $\Omega$.}
%%%%%%%%%%%%%%%%%%%%%%%%%%%%%%%%%%%%%%%%%%%%%%%%%%%%%%%%%%%%%%%%%%%%%%%%%%%%%%%%%%

We first introduce a probability measure $\mu$ on the set of admissible configurations $\D$, as the law of $n$ hard spheres -- each one submitted to the self-potential $\psiG$ -- in an admissible Poisson bath of particles. %with intensity $\zpiccola$.
In Section \ref{sec:reversible measure}  we will eventually prove that $\mu$ is indeed the reversible probability measure for the dynamics \eqref{eq:Sninfty}.

%%%%%%%%%%%%%%%%%%%%%%%%%%%%%%%%%%%%%%%%%%
Let $\pi(d \bpiccola)$ denote the Poisson point process on $\MPiccola$ with intensity $\zpiccola>0$, where $\zpiccola$ is a fixed parameter (we let it vary only in Section~\ref{sec:packing}). We consider the probability measure $\mu$ on $\M$ with support in $\D$, defined by the following integral characterisation: for any positive measurable function $F$ on $\D$,
\begin{equation} \label{eq:mu}
    \int_\D F (\bx) \, \mu (d \bx)\defeq \frac{1}{Z}
\int_{\R^{nd}} \int_{\MPiccola} F(\bgrande\bpiccola)  \,  \un_{\D}(\bgrande\bpiccola) \, \pi (d \bpiccola) \,\otimes_{i=1}^n \lambda (d\grande_i).
\end{equation}
The normalisation constant
\begin{equation*}
    Z = \int_\M \un_{\D}(\bgrande\bpiccola)\, \pi (d \piccola)\, \otimes_{i=1}^n \lambda (d\grande_i) < + \infty
\end{equation*}
is finite since the measure $\lambda$ has finite mass.

Notice that, by considering $\otimes_{i=1}^n \lambda (d\grande_i)$, we have enforced an ordering on the hard spheres $\bgrande$. 
As such, one would expect an additional factor of $1/n!$, which is absorbed in the above normalisation constant $Z$.
%
%%%%%%%%%%%%%%%%%%%%%%%%%%%%%%%%%%%%%%%
\begin{proposition}%%%%%%%%%%%%%%%%%%%%%%%%%%%%%%%%%%%%%%%%%%%%%%%%%%%%%%%%%%%%%%%%%%%%%%%%%%%%%%%%%%
\label{PropNegligeabilite}
For $\mu$-a.e. $\bx \in \D$, 
\begin{equation*}
    P( \Omega_\bx ) =1.
\end{equation*}
Therefore, the limit process $\disp X^\bx$ constructed in Proposition \ref{PropStationnarite} is well-defined for $\mu$-almost every initial configuration $\bx$.
\end{proposition}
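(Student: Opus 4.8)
The plan is to turn the statement into a Borel--Cantelli estimate whose two inputs are (i) a modulus-of-continuity bound for the reflected finite dynamics \eqref{eq:SnmR} that is uniform in the confinement level $R$, and (ii) the fact that the Poisson bath contains only polynomially many particles in large balls.

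First I would fix $\bx=\bgrande\bpiccola\in\D$ and unravel \eqref{eq:Omegax}. Let $\mathcal{G}_R$ denote the event that each of the finitely many paths $\Grande_i^{\bx,R},\Grande_i^{\bx,R+1}$ ($i\le n$) and $\Piccola_k^{\bx,R},\Piccola_k^{\bx,R+1}$ ($k\le m(\bpiccola,R+1)$) from \eqref{eq:XxR} lies in its prescribed $\NicePath$ set, and set $A_{\Rgrande}\defeq\{\exists\,\Rpiccola\ \forall R\ge\Rpiccola:\ \mathcal{G}_R\}=\liminf_{R\to\infty}\mathcal{G}_R$; then $\Omega_\bx=\bigcap_{\Rgrande\in\N^*}A_{\Rgrande}$. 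Since a countable intersection of $P$-full events is $P$-full, it suffices to show $P(A_{\Rgrande})=1$ for each $\Rgrande$ and $\mu$-a.e.\ $\bx$, and by the first Borel--Cantelli lemma this follows from $\sum_{R\ge1}P(\mathcal{G}_R^{c})<+\infty$. Here one uses that every $\NicePath$ set in \eqref{eq:Omegax} has third parameter $1$, so a path $f$ lies in it as soon as $w(f,1/\sqrt R)\le1$; hence $\mathcal{G}_R^{c}$ is contained in the event that $w(f,1/\sqrt R)>1$ for at least one among those $\le 2n+2m(\bpiccola,R+1)$ paths.

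The heart of the argument, and the step I expect to be the real obstacle, is a modulus estimate in the spirit of \cite{FR2}, resting on the quantitative Lipschitz property of the multi-constraint reflected Skorokhod map of \cite{MultipleConstraint}: one wants, for $\mu$-a.e.\ $\bx$, an a.s.\ finite random constant $c=c(\bx,\omega)$, \emph{not depending on $R$}, such that for every $R$ and $\delta\le1$, every coordinate path $g$ of the solutions of \eqref{eq:SnmR} with parameters $(m(\bpiccola,R),R)$ and $(m(\bpiccola,R+1),R+1)$ satisfies $w(g,\delta)\le c\big(\max_{i\le n}w(\WGrande_i,\delta)+\max_{k\le m(\bpiccola,R+1)}w(\WPiccola_k,\delta)+\delta\big)$. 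The point is that this comparison constant must not degrade as $R\to\infty$ despite the $O(R^d)$ local-time terms in \eqref{eq:SnmR}, and this is where the two-type structure is essential. A particle that never touches a hard sphere moves as a Brownian motion with the bounded drift $-\tfrac{\sigmaP^2}{2}\nabla\psiPR$, so trivially $w(g,\delta)\le\sigmaP\,w(\WPiccola_k,\delta)+C\delta$; while the $n$ hard spheres, subject only to $\nabla\psiG$ (bounded), to their $\binom n2$ mutual reflections, and to reflections against the particles, stay --- via the global Skorokhod estimate, together with a bootstrap on the first time a distant particle reaches distance $\rdep$ of a hard sphere --- inside a bounded, $R$-independent (random) neighbourhood of $\bgrande$. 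Consequently only finitely many particles of $\bx$ (those starting in a bounded neighbourhood of $\bgrande$, a number that is a.s.\ finite and does not grow with $R$) can ever belong to a contact cluster, every contact cluster has at most $n+O(1)$ members uniformly in $R$, and the Lipschitz constant of the localised Skorokhod problem is bounded accordingly. It is exactly here that the fixedness of $n$ and the absence of a particle--particle constraint are used.

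Granting this estimate, the conclusion is routine. Fix $\eps_0>0$ and $M$ with $M(\eps_0+1)\le1$; on $\{c(\bx,\omega)\le M\}$ and for $\delta=1/\sqrt R$, the event $\mathcal{G}_R^{c}$ forces $w(\WGrande_i,1/\sqrt R)>\eps_0$ for some $i\le n$ or $w(\WPiccola_k,1/\sqrt R)>\eps_0$ for some $k\le m(\bpiccola,R+1)$. A standard Gaussian bound on the modulus of continuity of Brownian motion (of Garsia--Rodemich--Rumsey type) gives $P\big(w(W,1/\sqrt R)>\eps_0\big)\le c_1\sqrt R\,\e^{-c_2\eps_0^2\sqrt R}$, whence, by a union bound over the $n+m(\bpiccola,R+1)$ independent driving Brownian motions,
\begin{equation*}
  P\big(\mathcal{G}_R^{c}\cap\{c(\bx,\omega)\le M\}\big)\ \le\ \big(n+m(\bpiccola,R+1)\big)\,c_1\sqrt R\,\e^{-c_2\eps_0^2\sqrt R}.
\end{equation*}
Since the $\bpiccola$-marginal of $\mu$ is absolutely continuous with respect to the Poisson law $\pi$, for $\mu$-a.e.\ $\bx$ the law of large numbers for Poisson processes yields $m(\bpiccola,R+1)=\#\{k:\piccola_k\in B(0,R+1)\}=O(R^d)$, so the bound is $O\big(R^{d+1/2}\e^{-c_2\eps_0^2\sqrt R}\big)$, summable in $R$. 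By Borel--Cantelli, $\{c(\bx,\omega)\le M\}\subseteq A_{\Rgrande}$ up to a $P$-null set for every $M\in\N$, and since $\{c(\bx,\omega)<\infty\}$ has probability $1$ we get $P(A_{\Rgrande})=1$; intersecting over $\Rgrande\in\N^*$ gives $P(\Omega_\bx)=1$ for $\mu$-a.e.\ $\bx$. The final assertion of Proposition \ref{PropNegligeabilite} is then immediate, $\Omega_\bx$ being by construction the event on which Proposition \ref{PropStationnarite} produces $X^\bx$.
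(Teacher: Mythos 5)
Your reduction of $\Omega_\bx$ to a Borel--Cantelli statement, the Gaussian bound on $w(W,1/\sqrt R)$, and the Poissonian count $m(\bpiccola,R)=O(R^d)$ all match what is needed and are fine. The problem is the step you yourself flag as ``the real obstacle'': the existence of an $R$-independent random constant $c(\bx,\omega)$ with $w(g,\delta)\le c\big(\max_i w(\WGrande_i,\delta)+\max_k w(\WPiccola_k,\delta)+\delta\big)$ for every coordinate of the solution of \eqref{eq:SnmR}. This is asserted, not proved, and the natural source for it --- the Lipschitz continuity of the multi-constraint Skorokhod map of \cite{MultipleConstraint} --- does not deliver it: the Lipschitz constant there is governed by the compatibility constant $b_0$, and the paper computes $b_0=\rdep/(2\sqrt2\,\rgrande(n+m)^{3/2})$, which \emph{vanishes} as $m\to\infty$ (the authors explicitly remark that for this reason the method cannot be pushed to $m=+\infty$). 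Your proposed repair --- bootstrap to show the hard spheres stay in a bounded $R$-independent region, so that only boundedly many particles ever join a contact cluster, so that a localised Skorokhod estimate applies with a uniform constant --- is circular as stated: to confine the hard spheres you need to control the local-time terms coming from the (a priori unboundedly many) particles they may hit, which is exactly the uniform modulus estimate you are trying to establish. Making such a localisation rigorous would be a substantial piece of work that your proposal does not contain.

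The paper circumvents this entirely by a different mechanism: under the \emph{reversible} initial law $\nu^{m,R}$ the time-reversed process solves the same SDE driven by another Brownian motion, and summing the forward and backward equations cancels all drift and local-time terms, giving the exact identity $\Grande_i^{m,R}(t)=\Grande_i^{m,R}(0)+\tfrac12(\WGrandei(t)+\underleftarrow{\WGrandei}(1-t)-\underleftarrow{\WGrandei}(1))$ (Lemma \ref{OscillationFiniteNumberBubbles}). No Skorokhod Lipschitz estimate is needed, and the bound $n+m$ in the resulting probability estimate is harmless after the Poissonian mixture. The price is that this only works for the reversible initial distribution, which is why the paper must then compare, in total variation, the law of $X^{\bx,R}$ with $\bx\sim\mu$ (particles truncated to $B(0,R)$) to the reversible mixture $\mu^R$, and show $\mathbf{d}_{TV}(R)$ is summable --- an ingredient that has no counterpart in your scheme but that replaces the uniform pathwise estimate you would otherwise need. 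As it stands, your proof has a genuine gap at its central step.
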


%%%%%%%%%%%%%%%%%%%%%%%%%%%%%%%%%%%%%%%%%%%%%%%%%%%%%%%%%%%%%%%%%%%%%%%%%%%%%%%%%%
\begin{proof}
We aim to prove that $\disp \int_\D P\big( (\Omega_{\bx})^c \big) \, d \mu(\bx) =0 $. 

From the definition of $\Omega_\bx$ given in \eqref{eq:Omegax}, we can write its complement set $(\Omega_\bx)^c$ as
\begin{align*}
 \bigcup_{\Rgrande\in\N^*} \limsup_R
           \bigg\{
           &\exists i\le n: \Grande_i^{\bx,R} \in \BadPath \big(\Rgrande+\sqrt{R},\frac{1}{\sqrt{R}},1\big) \cup \BadPath \big(\Rgrande+\sqrt{R-1},\frac{1}{\sqrt{R-1}},1 \big) \text{ or }\\ 
           &\exists k\le m(\bpiccola,R): \Piccola_k^{\bx,R} \in \BadPath \big(\Rgrande+\rdep+\sqrt{R},\frac{1}{\sqrt{R}},1 \big) \cup \BadPath \big(\Rgrande+\sqrt{R-1},\frac{1}{\sqrt{R-1}},1\big)
           \bigg\}, 
\end{align*}
where $\BadPath$ denotes the set of \emph{bad paths}, complement of the set $\NicePath$ of nice paths defined in \eqref{DefNicepath}:
\begin{equation*}
    \BadPath(\alpha,\delta,\eps)
    \defeq \left\{ f\colon [0,1]\to\R^d \text{ continuous s.t. } 
               \min_{s\in[0,1]}|f(s)| \leq \alpha \text{  and } w(f,\delta)> \eps \right\} .    
\end{equation*}
In other words, a path in $\BadPath(\alpha,\delta,\eps)$ visits the ball $B(0,\alpha)$ during the time interval $[0,1]$ and its $\delta$-modulus of continuity is larger than $\eps$.

Since $\BadPath(\alpha,\delta,\eps)$ increases as $\alpha$ increases, and increases as $\delta$ increases, it suffices to prove that
$\disp \int_\D P( \tilde{\Omega}_\bx) \, d \mu(\bx) =0 $, where
\begin{align*}
\tilde{\Omega}_\bx \defeq \bigcup_{\Rgrande\in\N^*} \limsup_R
           & \left\{ 
             \exists i\le n : \Grande_i^{\bx,R} \in \BadPath(\Rgrande+\sqrt{R},\frac{1}{\sqrt{R-1}},1) \right.  \\ 
           & \left. \quad
             \text{ or } \exists k\le m(\bpiccola,R) : \Piccola_k^{\bx,R} \in \BadPath(\Rgrande+\rdep+\sqrt{R},\frac{1}{\sqrt{R-1}},1) \right\} \supset (\Omega_\bx)^c.
\end{align*}
Thanks to the Borel--Cantelli lemma, it is then sufficient to prove that, for any $\Rgrande\in\N^*$,
\begin{multline}
\label{ConvergenceSerieSurR}
\sum_{R \in \N^*} \int_\D P\Big( \exists i\le n,~ \Grande_i^{\bx,R} \in \BadPath(\Rgrande+\sqrt{R},\frac{1}{\sqrt{R-1}},1) \quad \text{ or }\\
                 \exists k\le m(\bpiccola,R),~ \Piccola_k^{\bx,R} \in \BadPath(\Rgrande+\rdep+\sqrt{R},\frac{1}{\sqrt{R-1}},1) 
                 \Big) \, d \mu(\bx) <+\infty .
\end{multline}
The convergence of the above series will derive from a precise control of the probability of bad paths under various reversible dynamics. This result is contained in Lemmas \ref{OscillationFiniteNumberBubbles} and \ref{OscillationInfiniteNumberBubbles} below. 
%
%%%%%%%%%%%%%%%%%%%%%%%%%%%%%%%%%%%%%%%%%%%%%%%%%%%%%%%%%%%%%%%%%%%%%%%%%%%%%%%%%%
\begin{lemma} \label{OscillationFiniteNumberBubbles}
%%%%%%%%%%%%%%%%%%%%%%%%%%%%%%%%%%%%%%%%%%%%%%%%%%%%%%%%%%%%%%%%%%%%%%%%%%%%%%%%%%
Let $\numR$ denote the probability measure on $\D_m$ obtained by normalising the measure $\nu^{m,R} $ defined  in \eqref{eq:numR}. 

The reversible solution $X^{m,R}$ of the SDE \eqref{eq:SnmR} with initial distribution $\numR$ satisfies the following inequality: for $0<\alpha<\alpha'$, $\delta\in (0,1)$ and $\eps>0$, 
\begin{equation*}
    P_\numR \left( \exists i\le n,\, \Grande_i^{m,R}\in\BadPath(\alpha,\delta,\eps) \textrm{ or } 
           \exists k\le m,\, \Piccola_k^{m,R}\in\BadPath(\alpha',\delta,\eps) \right)
    \le \frac{\mathfrak{c}_1}{\delta}\, \frac{n+m}{1-\e^{-\mathfrak{c}_0 \, \delta \alpha^2}}\, \e^{-\mathfrak{c}_0\,\eps^2/\delta},
\end{equation*}
where $\mathfrak{c}_0>0$ and $\mathfrak{c}_1>0$ are constants depending only on the dimension $d$ and the parameter $\sigmaP$.
\end{lemma}
For the sake of readability, the proof of this lemma is stated at the end of the section.
\medbreak

Consider now a Poissonian randomisation of the number $m$ of moving particles in the measure $\nu^{m,R}$. This leads to the definition of the following  probability measure on $\D$, mixture of $\nu^{m,R}$ measures:
\begin{equation} \label{eq:muR}
\mu^R \defeq \frac{1}{\ZRmixing} \sum_{m=0}^{+\infty} \frac{\zpiccola^m}{m!} \, \nu^{m,R} ,
\end{equation} 
where the normalisation constant $\ZRmixing$ is given by
\begin{equation*}
    \ZRmixing 
 =    \int_{\R^{dn}} \un_{\D}(\bgrande)\, 
                    \exp\left( \zpiccola \int_{\R^d\setminus\BGrande(\bgrande)} \e^{-\psiPR(x)} dx \right)\, \otimes_{i=1}^n \lambda (d\grande_i) .     
\end{equation*}
Recall from~\eqref{UnionBouees} that the interior of the set $\BGrande(\bgrande)\subset \R^d$ is the forbidden volume for the centres of particles around the configuration $\bgrande$.
\smallbreak

By arguments similar to those used in the proof of Lemma \ref{OscillationFiniteNumberBubbles}, we can also control the probability that the solution of \eqref{eq:SnmR} with initial distribution $\mu^R$ contains bad paths:
\begin{lemma}%%%%%%%%%%%%%%%%%%%%%%%%%%%%%%%%%%%%%%%%%%%%%%%%%%%%%%%%%%%%%%%%%%%%%%%%%%%%%%%%%%
\label{OscillationInfiniteNumberBubbles}
The solution of the SDE \eqref{eq:SnmR} with initial distribution $\mu^R$ satisfies the following inequality: for $0<\alpha<\alpha'$, $\delta\in (0,1)$ and $\eps>0$,
\begin{equation*}
\begin{split}
    P_{\mu^R}& \left( \exists i\le n: \Grande_i^{m,R}\in\BadPath(\alpha,\delta,\eps) \text{ or } 
           \exists k\le m: \Piccola_k^{m,R}\in\BadPath(\alpha',\delta,\eps) \right)\\
    &\le \frac{\mathfrak{c}_1}{\delta} \,
     \Big( n + \zpiccola \displaystyle \int_{\R^d} \e^{-\psiPR(x)} dx \Big) \,
    \frac{ \e^{-\mathfrak{c}_0\,\eps^2/\delta} }{1-\e^{-\mathfrak{c}_0 \, \delta \alpha^2}}\, ,
\end{split}
\end{equation*}
where $\mathfrak{c}_0>0$ and $\mathfrak{c}_1>0$ are universal constants depending only on the dimension $d$ and the parameter $\sigmaP$.
\end{lemma}%%%%%%%%%%%%%%%%%%%%%%%%%%%%%%%%%%%%%%%%%%%%%%%%%%%%%%%%%%%%%%%%%%%%%%%%%%%%%%%%%%
From Lemma \ref{OscillationInfiniteNumberBubbles}, together with Remark \ref{rem:psiPR}, it is now easy to obtain the convergence of the following series:
\begin{equation}\label{ConvergenceSerieMuRZ}
\begin{aligned}[t]
    &\sum_R  P_{\mu^R}\Big( \exists i\le n : \Grande_i^{m,R}\in\BadPath(\Rgrande+\sqrt{R},\frac{1}{\sqrt{R-1}},1) \text{ or } \\
    &\phantom{\sum_R  P_{\mu^R}\Big(}
    \exists k\le m : \Piccola_k^{m,R}\in\BadPath(\Rgrande+\rdep+\sqrt{R},\frac{1}{\sqrt{R-1}},1) \Big) \\
    &\le \displaystyle \sum_R \mathfrak{c}_1\sqrt{R-1} ~ \frac{n + \zpiccola \int_{\R^d} \e^{-\psiPR(x)} dx}{1-\exp(-\mathfrak{c}_0 (\Rgrande+\sqrt{R})^2 /\sqrt{R-1})}   e^{-\mathfrak{c}_0\sqrt{R-1}} < + \infty.
\end{aligned}    
\end{equation}
This is still not exactly the summability \eqref{ConvergenceSerieSurR} we are aiming for, but we are close. 
%%%%%%%%%%%%%%%%%%%%%%%%%%%%%%%%%%%%%%%%%%

In order to conclude, we have to compare the two processes below, whose dynamics are given by the same SDE~\eqref{eq:SnmR}, ${m\in\N}$, but with different initial configuration distributions:
\begin{itemize}
    \item The process $X^{m,R}$, with random $m$ and initial configuration $X^{m,R}(0)$ chosen according to the probability measure $\mu^R$, introduced in \eqref{eq:muR};
    \item The process $X^{\bx,R}$, whose initial configuration $\bx$ is chosen according to the probability measure $\mu$. In particular, the random number $m=m(\bpiccola,R)$ corresponds to the number $\# \bpiccolaR$ of particles of $\bpiccola$ in $B(0,R)$, see \eqref{eq:XxR}.
\end{itemize}
Note that the first process is reversible -- as it is given by a mixture of reversible processes -- while the second one is not, since, e.g., its initial law only weighs configuration of particles concentrated in $B(0,R)$.

In order to estimate the difference between these two processes, we consider the total variation distance between their laws, denoted by $\mathbf{d}_{TV}(R)$. It is defined as usual as the supremum over all measurable sets $A$ of continuous paths with values in $\D$\,:
\begin{equation*}
\begin{split}
\mathbf{d}_{TV}(R) 
& \defeq\sup_{A\subset C([0,1],\D)} 
         \bigg|
         \int_\D P \big(X^{\# \bpiccola,R}  \in A | X^{\# \bpiccola,R}(0)=\bx  \big) \, \mu^R (d\bx) \\
         &\phantom{\defeq\sup_{A \subset C([0,1],\D)} 
         \bigg|
         \int_\D}
         - \int_\D P \big(X^{m(\bpiccola,R),R}  \in A | X^{m(\bpiccola,R),R}(0)=\bx  \big) \, \mu (d\bx) 
         \bigg|  \\
& =\sup_{A\subset C([0,1],\D)} 
        \bigg| 
        \int_\D F_A(\bx) ~\mu^R(d\bx) -\int_\D F_A(\bgrande\bpiccolaR) \, \mu(d\bx)
        \bigg|,
\end{split}
\end{equation*}
where, for any $m\geq 0$, the function $F_A$ is defined on $\D_m$ by $F_A(\bx) \defeq P\big(X^{m,R}  \in A | X^{m,R}(0)=\bx \big)$. 

In the second integral, we can disintegrate the Poisson point measure $\pi$ (which models the law of the particles under $\mu$) into the product of the Poisson point measure $\pi_{|_R}$ inside $B(0,R)$ and the Poisson point measure $\pi_{|_{R^c}}$ outside $ B(0,R)$.
Denoting
\begin{equation*}
Z_R \defeq \int_{\MPiccola} \int_{\R^{dn}} \un_{\D}(\bgrande\bpiccolay) ~ 
     \e^{\zpiccola |B(0,R)\setminus\BGrande(\bgrande)|} 
     \, \otimes_{i=1}^n \lambda(d\grande_i) \, d \pi_{|_{R^c}}(\bpiccolay),
\end{equation*}
we obtain that $\mathbf{d}_{TV}(R)$ is the supremum over $A$ of the following expression:
\begin{equation*}
\begin{split}
    &\bigg|\frac{1}{\ZRmixing} \int_{\R^{dn}} \un_{\D}(\bgrande) 
  \Big( F_A(\bgrande) \\
  &\phantom{\bigg|\frac{1}{\ZRmixing} \int_{\R^{dn}} \un_{\D}}
  +\sum_{m=1}^{+\infty} \frac{\zpiccola^m}{m!} 
                            \int_{\R^{dm}} F_A(\bgrande\piccola_1\dots \piccola_m) \prod_{k=1}^m \big( \un_{\piccola_k\notin\BGrande(\bgrande)} \, \e^{-\psiPR(\piccola_k)} \big)
                            \otimes_{k=1}^m d\piccola_k \Big)
    \otimes_{i=1}^n\lambda(d\grande_i)\\
    &\phantom{\bigg|}-\frac{1}{Z_R} \int_{\MPiccola} \int_{\R^{dn}} \un_{\D}(\bgrande\bpiccolay) 
   \Big( F_A(\bgrande) \\
    &\phantom{\frac{1}{Z_R} \int_{\MPiccola} \int_{\R^{dn}} }
          +\sum_{m=1}^{+\infty} \frac{\zpiccola^m}{m!} 
           \int_{B(0,R)^m} F_A(\bgrande\piccola_1\dots\piccola_m) \prod_{k=1}^m \un_{\piccola_k\notin\BGrande(\bgrande)} 
           \otimes_{k=1}^m d\piccola_k  \Big)
    \otimes_{i=1}^n \lambda(d\grande_i) \,\pi_{|_{R^c}}(d\bpiccolay)\bigg|\\
    &\leq \int_{\MPiccola}
    \bigg|\int_{\R^{dn}} F_A(\bgrande) \Big( \frac{\un_{\D}(\bgrande)}{\ZRmixing}
                                            -\frac{\un_{\D}(\bgrande\bpiccolay)}{Z_R} \Big) 
      \\
    &\phantom{\le \int_{\MPiccola}\bigg|\int_{\R^{dn}}}
      +\sum_{m=1}^{+\infty} \frac{\zpiccola^m}{m!}                          
                    \int_{\R^{dm}} F_A(\bgrande\bpiccola) 
                                   \prod_{k=1}^m \Big( \un_{\piccola_k\notin\BGrande(\bgrande)}\, \e^{-\psiPR(\piccola_k)} \Big)\\
    &\phantom{\le \int_{\MPiccola}\bigg|\int_{\R^{dn}}+\sum_{m=1}^{+\infty}}
    \Big( \frac{\un_{\D}(\bgrande)}{\ZRmixing}  
                                          -\frac{\un_{\D}(\bgrande\bpiccolay)}{Z_R}
                                           \un_{\bpiccola \subset B(0,R)} \Big)
                    \, \otimes_{k=1}^m d\piccola_k  
  \, \otimes_{i=1}^n \mesGrande(d\grande_i)             
  \bigg|   
  \,\pi_{|_{R^c}}(d\bpiccolay).
\end{split}
\end{equation*}
We then have that
\begin{equation*}
\begin{split}
    \mathbf{d}_{TV}(R)
     &\le \left| \frac{\ZRmixing}{Z_R}-1 \right| + n \left( 1+\frac{\ZRmixing}{Z_R} \right) \frac{\e^{\zpiccola |B(0,R)|}}{\ZRmixing}
      \, \e^{ \zpiccola \int_{B(0,R)^c} \e^{-\psiPR(x)} \, d x }
      \int_{B(0,R-\rdep)^c} \, \e^{-\psiG(x)} \, d x \\
  &\phantom{\le \frac{\ZRmixing}{Z_R} + n \left( 1+\frac{\ZRmixing}{Z_R} \right) \frac{\e^{\zpiccola |B(0,R)|}}{\ZRmixing}}
+\zpiccola\, \frac{\ZRmixing}{Z_R} \int_{B(0,R)^c} \e^{-\psiPR(x)} \, d x,
\end{split}
\end{equation*}
where the last inequality follows from the fact that the function $F_A$ is bounded by one, and by carefully reordering and upperbounding each term.

We now need the following fine estimate on the asymptotic behaviour for large $R$ of the two normalisation constants $\ZRmixing$ and $Z_R$:
\begin{equation*}
   1 \ \le \ \frac{\ZRmixing}{Z_R} \ \le \ \displaystyle \exp \Big( \zpiccola \displaystyle \int_{B(0,R)^c} \e^{-\psiPR(x)} dx \Big) , 
\end{equation*}
where the upper bound converges quickly to 1, since the exponent is summable in $R$, as stated in \eqref{eq:psiPR}. 
Therefore, there exists a positive constant $\mathfrak{c}_3$ depending only on $n, \psiG,\zpiccola$ such that, for $R$ large enough,
\begin{equation}\label{DistanceVariationSommable}
\begin{aligned}
    \mathbf{d}_{TV}(R) 
\ &\le \ \mathfrak{c}_3 \Big( \int_{B(0,R-\rdep)^c} \, \e^{-\psiG(x)} \, d x + \int_{B(0,R)^c} \e^{-\psiPR(x)} dx \Big) \\
&\le \ \mathfrak{c}_3 \left( \frac{ \int_{\R^d} |x|^2 \, \e^{-\psiG(x)} \, d x }{ (R-\rdep)^2 } 
                              + \int_{B(0,R)^c} \e^{-\psiPR(x)} dx \right).
\end{aligned}
\end{equation}
Thanks to assumptions \eqref{eq:psiG} and \eqref{eq:psiPR} on $\psiG$ and $\psiPR$, respectively, the right-hand side is summable.

The convergence of the series in \eqref{ConvergenceSerieMuRZ} implies the summability \eqref{ConvergenceSerieSurR} for every $\Rgrande$,
which in turn implies that $\Omega_\bx$ is $\mu$-a.s.~a set of full measure.
This completes the proof of Proposition \ref{PropNegligeabilite}.
\end{proof}
%%%%%%%%%%%%%%%%%%%%%%%%%%%%%%%%%%%%%%%%%%%%%%%%%%%%%%%%%%%%%%%%%%%%%%%%%%%%%%%%%%
We are then only left with proving Lemma \ref{OscillationFiniteNumberBubbles}.
\begin{proof}[Proof of Lemma \ref{OscillationFiniteNumberBubbles}]
%%%%%%%%%%%%%%%%%%%%%%%%%%%%%%%%%%%%%%%%%%%%%%%%%%%%%%%%%%%%%%%%%%%%%%%%%%%%%%%%%%
According to \eqref{eq:SnmR}, for any $i \in \{1,\dots,n\}$, the process
\begin{equation}\label{eq:WiBM}
\begin{split}
&\sigmaG \WGrandei (t) 
 =  \Grande_i^{m,R} (t) - \Grande_i^{m,R} (0) 
    + \frac{1}{2} \int_0^t \sigmaG \ \nabla\psiG \big(\Grande_i^{m,R} (s)\big)\, d s   \\ 
& \qquad  - \sum_{j=1}^n \int_0^t \big(\Grande_i^{m,R}-\Grande_j^{m,R})(s)\big)\, dL_{ij}(s)  
       - \sum_{k=1}^{m} \int_0^t (\Grande_i^{m,R}-\Piccola_{k}^{m,R})(s)\, d \ell_{ik}(s)
\end{split}
\end{equation} 
%$t \in [0,1]$, 
is a Brownian motion.
Since $X^{m,R}$ is time-reversible, the process
$\left( \Grande_i^{m,R}(t) \right)_{t\in[0,1]}$ has the same distribution as the backward process $\left( \Grande_i^{m,R}(1-t) \right)_{t\in[0,1]} $.
Consequently the process $\underleftarrow{\WGrandei}$ 
obtained by replacing $\Grande_i^{m,R} (\cdot)$ by $\Grande_i^{m,R} (1-\cdot)$ in \eqref{eq:WiBM} 
is also a Brownian motion. Moreover, it satisfies
\begin{align*}
\sigmaG\, \underleftarrow{\WGrandei}(t)
& = \Grande_i^{m,R} (1-t) - \Grande_i^{m,R} (1) 
    + \frac{1}{2} \int_{1-t}^1 \sigmaG \ \nabla\psiG \big(\Grande_i^{m,R} (s)\big)\, d s   \\ 
&~~~   - \sum_{j=1}^n \int_{1-t}^1 \big(\Grande_i^{m,R}-\Grande_j^{m,R})(s)\big)\,d L_{ij}(s)    
       - \sum_{k=1}^{m} \int_{1-t}^1 (\Grande_i^{m,R}-\Piccola_{k}^{m,R})(s)\, d \ell_{ik}(s). 
\end{align*} 
Summing \eqref{eq:WiBM} with the above expression at time $1-t$,  we get
\begin{align*}
\sigmaG\, \WGrande_i(t) + \sigmaG\, \underleftarrow{\WGrande}_i(1-t)
& = 2\Grande_i^{m,R} (t) - \Grande_i^{m,R} (0) - \Grande_i^{m,R} (1) 
    + \frac{1}{2} \int_0^1 \sigmaG \ \nabla\psiG \big(\Grande_i^{m,R} (s)\big)\, d s   \\ 
&~~~   - \sum_{j=1}^n \int_0^1 \big(\Grande_i^{m,R}-\Grande_j^{m,R})(s\big))dL_{ij}(s)    
       - \sum_{k=1}^{m} \int_0^1 (\Grande_i^{m,R}-\Piccola_{k}^{m,R})(s) d\ell_{ik}(s),
\end{align*}   
where the sum of the three integral terms is equal to 
$\sigmaG\, \underleftarrow{\WGrande}_i(1) - \Grande_i^{m,R}(0) + \Grande_i^{m,R}(1)$.\\
Therefore, one can express $\Grande_i^{m,R(\cdot)}$  without local-time terms as follows:
\begin{equation}
    \label{eq:Xisanstempslocal}
\Grande_i^{m,R}(t) = \Grande_i^{m,R}(0) + 
\frac{1}{2} \big( \WGrandei(t) + \underleftarrow{\WGrandei}(1-t) - \underleftarrow{\WGrandei}(1) \big), \ t\in [0,1].
\end{equation} 
Similarly,
 \begin{equation*}
    \label{eq:Xksanstempslocal}
    \Piccola_k^{m,R} (t) = \Piccola_k^{m,R} (0) 
   + \frac{1}{2} \big(\sigmaP\, \WPiccolak (t) + \sigmaP\, \underleftarrow{\WPiccolak}(1-t) -\sigmaP\, \underleftarrow{\WPiccolak}(1) \big), \ t \in [0,1].
\end{equation*}
The $n+m$ components of $X^{m,R}$ are exchangeable, thus
\begin{multline*}
   P_\numR \left( \exists i\le n,~ \Grande_i^{m,R}\in\BadPath(\alpha,\delta,\eps) \text{ or } 
           \exists k\le m,~ \Piccola_k^{m,R}\in\BadPath(\alpha',\delta,\eps) \right)    \\
   \le n~P_\numR \left( \Grande_1^{m,R}\in\BadPath(\alpha,\delta,\eps) \right) 
       + m~P_\numR \left( \Piccola_1^{m,R}\in\BadPath(\alpha',\delta,\eps) \right) .
\end{multline*} 
Summing over all possible initial positions, we get
\begin{align*}
  & P_\numR \left( \Grande_1^{m,R}\in\BadPath(\alpha,\delta,\eps) \right) \\
   & = \sum_{k=0}^{+\infty} 
   P_\numR \left( k\alpha \le |\Grande_1^{m,R}(0)| < (k+1)\alpha \text{  and } 
                            \inf_{[0;1]}|\Grande_1^{m,R}| \le \alpha  \text{  and } w(\Grande_1^{m,R},\delta)> \eps \right) .
\end{align*}
A path which starts outside $B(0,k\alpha)$ and visits $B(0,\alpha)$ before time $1$ necessarily has an oscillation larger than $(k-1)\alpha\delta$ in some time interval of length $\delta$; moreover, for $k$ large enough, $(k-1)\alpha\delta$ is larger than $\eps$. Therefore, the first two conditions in the above event imply the third one as soon as $\displaystyle k>1+\frac{\epsilon}{\alpha\delta}$. Hence,
\begin{align*}
    P_\numR \left( \Grande_1^{m,R}\in\BadPath(\alpha,\delta,\eps) \right)    
    &\le P_\numR \left( |\Grande_1^{m,R}(0)| < \alpha +\epsilon/\delta \text{  and } 
                   w(\Grande_1^{m,R},\delta) > \eps \right) \\
    & ~~~~~~ + \sum_{k>1+\eps/\alpha\delta} P_\numR \Big( k\alpha \le |\Grande_1^{m,R}(0)| < (k+1)\alpha  \\[-.8em]
    & \phantom{~~~~~~ + \sum_{k>1+\eps/\alpha\delta} P_\numR \Big(}
        \ \text{ and } w(\Grande_1^{m,R},\delta) > (k-1)\alpha\delta \Big) .
\end{align*}
Thanks to the decomposition \eqref{eq:Xisanstempslocal}, 
\begin{equation*}
    w(\Grande_1^{m,R},\delta) > \eps \quad \Rightarrow \quad w(\WGrandeone,\delta) > \eps \ 
\textrm{ or } \, w(\underleftarrow{\WGrandeone},\delta) > \eps.    
\end{equation*}
Therefore, we have
\begin{equation*}\label{InegModuleContinuiteBrownien}
\begin{split}
    & P_\numR \Big( \Grande_1^{m,R}\in\BadPath(\alpha,\delta,\eps) \Big)   \\ 
    & \le P_\numR \big( |\Grande_1^{m,R}(0)| < \alpha+\frac{\eps}{\delta} \big)  
             \Big( P_\numR \big( w(\WGrandeone,\delta) > \eps \big)
                     + P_\numR \big( w(\underleftarrow{\WGrandeone},\delta) > \eps \big) \Big)  \\
    & ~~~~~~ + \sum_{k>1+\eps/\alpha\delta} %P\left( k\alpha \le |\Grande_1^{m,R}(0)| < (k+1)\alpha \right)    \\
   %& ~~~~~~~~~~~~~~~~~~             
              \Big( P_\numR \big( w(\WGrandeone,\delta) >(k-1)\alpha\delta \big)
                    + P_\numR \big( w(\underleftarrow{\WGrandeone},\delta)  >(k-1)\alpha\delta \big) \Big).
\end{split}
\end{equation*}
Note now the following standard estimate on the modulus of continuity of any $d$-dimensional Brownian motion $W$: there exist two universal constants $\mathfrak{c}_1>0$ and $\mathfrak{c}_2>0$, depending only on the dimension $d$, such that
\begin{equation*}
    P\left( w(W,\delta) \ge \eps \right)\, \le\, \frac{\mathfrak{c}_1}{4\delta} ~ \exp(-\frac{\mathfrak{c}_2\eps^2}{\delta}).   
\end{equation*}
We then have
\begin{equation*}
\begin{split}
    & P_\numR \Big( \Grande_1^{m,R}\in\BadPath(\alpha,\delta,\eps) \Big)\\
    &\le \frac{\mathfrak{c}_1}{2\delta} ~ P_\numR \Big( |\Grande_1^{m,R}(0)| < \alpha+\frac{\eps}{\delta} \Big) \, 
                                  \exp(-\frac{\mathfrak{c}_2\eps^2}{\delta}) + \frac{\mathfrak{c}_1}{2\delta} \, \sum_{k>1+\eps/\alpha\delta} \exp(-\mathfrak{c}_2(k-1)^2\alpha^2\delta)    \\
    &  \le \frac{\mathfrak{c}_1}{2\delta} \,
             \Big( P_\numR \big( |\Grande_1^{m,R}(0)| < \alpha+\frac{\eps}{\delta} \big) 
                    + \sum_{j=0}^{+\infty} \exp(-\mathfrak{c}_2 j\alpha^2\delta) \Big) 
             \exp\big(-\frac{\mathfrak{c}_2\eps^2}{\delta}\big) \\
    &  \le \frac{\mathfrak{c}_1}{\delta} ~ \frac{1}{1 -\exp(-\mathfrak{c}_2\alpha^2\delta)} ~ 
               \exp\big(-\frac{\mathfrak{c}_2\eps^2}{\delta}\big), 
\end{split}
\end{equation*}
By similar arguments, we obtain the following  upper bound for the particles:
\begin{equation*}
    P_\numR \left( \Piccola_1^{m,R}\in\BadPath(\alpha',\delta,\eps) \right)
 \le \frac{\mathfrak{c}_1}{\delta} ~ \frac{1}{1 -\exp(-\mathfrak{c}_2(\alpha')^2\delta/\sigmaP^2)} ~ 
     \exp(-\frac{\mathfrak{c}_2\eps^2}{\sigmaP^2\delta}). 
\end{equation*}
Choosing $\mathfrak{c}_0\defeq \displaystyle \frac{\mathfrak{c}_2}{\max(1,\sigmaP^2)}$ yields 
the claimed estimate.
\end{proof} 

\medskip
%%%%%%%%%%%%%%%%%%%%%%%%%%%%%%%%%%%%%%%%
\subsection{A (two-type) reversible measure} \label{sec:reversible measure}
%%%%%%%%%%%%%%%%%%%%%%%%%%%%%%%%%%%%%%%%%
The identification  of reversible measures associated to the dynamics \eqref{eq:Sninfty} is mathematically and physically relevant. We address it in this section. 

\begin{proposition}\label{PropReversibility}
Consider the solution of the two-type infinite-di\-men\-sional equation \eqref{eq:Sninfty}, whose initial condition $X(0)$ is random  and distributed according to the probability measure $\mu$ defined on $\M$ by  \eqref{eq:mu}. This solution is time-reversible. 
\end{proposition}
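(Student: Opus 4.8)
The plan is to derive the time-reversibility of the infinite-dimensional dynamics \eqref{eq:Sninfty} by passing to the limit in the reversibility of the finite-dimensional approximations, using the two convergence facts already established: the $\omega$-wise stabilisation of the components of $X^{\bx,R}$ towards those of $X^{\bx}$ (Proposition~\ref{PropStationnarite}), and the summability --- hence vanishing --- of the total-variation distance $\mathbf d_{TV}(R)$ between the $\mu^R$-started and the $\mu$-started approximating path laws, established via \eqref{DistanceVariationSommable} in the proof of Proposition~\ref{PropNegligeabilite}. First I would recall that, by Proposition~\ref{prop:existencesolrevSnmR} and the Poissonian mixing \eqref{eq:muR}, the finite-dimensional reflected diffusion \eqref{eq:SnmR} started from $\mu^R$ is time-reversible: denoting by $\iota$ the time-reversal map $\gamma(\cdot)\mapsto\gamma(1-\cdot)$ on paths, the law on $C([0,1],\D)$ of the configuration path $\big(\Grande_i^{m,R}(t),\Piccola_k^{m,R}(t)\big)_{t\in[0,1]}$ under $\mu^R$ is invariant under $\iota$.

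Next I would fix a class of test functionals rich enough to determine the law of the infinite-dimensional configuration path, namely bounded \emph{cylinder functionals} $\Phi$ on $C([0,1],\D)$ depending only on the trajectories of finitely many hard spheres and particles and continuous for the topology of uniform-in-$t$ convergence of each such component. For such a $\Phi$, Proposition~\ref{PropStationnarite} gives $\Phi\big(X^{\bx,R}\big)\to\Phi\big(X^{\bx}\big)$ pointwise on the full-measure event $\Omega_\bx$, hence $\mathbb E_\mu\big[\Phi(X^{\bx,R})\big]\to\mathbb E_\mu\big[\Phi(X^{\bx})\big]$ by bounded convergence; moreover $\big|\mathbb E_{\mu^R}[\Phi(X^{m,R})]-\mathbb E_\mu[\Phi(X^{\bx,R})]\big|\le 2\|\Phi\|_\infty\,\mathbf d_{TV}(R)\to 0$. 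Since $\Phi\circ\iota$ is again a functional of the same type, chaining these estimates with the reversibility of the $\mu^R$-dynamics yields
\begin{equation*}
  \mathbb E_\mu\big[\Phi\big(X^{\bx}\big)\big]=\lim_{R\to\infty}\mathbb E_{\mu^R}\big[\Phi\big(X^{m,R}\big)\big]=\lim_{R\to\infty}\mathbb E_{\mu^R}\big[(\Phi\circ\iota)\big(X^{m,R}\big)\big]=\mathbb E_\mu\big[(\Phi\circ\iota)\big(X^{\bx}\big)\big],
\end{equation*}
so that the law of $\big(\bX^{\bx}(t)\big)_{t\in[0,1]}$ under $\mu$ is invariant under time reversal, which is the claim. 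Applying the same chain to functionals depending on a single time $t$ gives in passing $\bX^{\bx}(t)\sim\mu$ for every $t$, i.e.\ invariance of $\mu$ for \eqref{eq:Sninfty}.

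The step I expect to be the main obstacle is the topological bookkeeping in this density argument: one must fix once and for all a Polish topology on the configuration path space --- the natural candidate being uniform-in-$t$ vague convergence on $\M$, for which the evaluation of finitely many test balls is continuous and measure-determining and for which $\iota$ is a homeomorphism --- and verify that the componentwise stabilisation of Proposition~\ref{PropStationnarite} really forces convergence of \emph{all} such cylinder functionals, despite the particle count carried by $X^{\bx,R}$ growing with $R$. The resolution is that on any bounded spatial window and up to time $1$ only finitely many particles are ever in play, and these are precisely the ones that stabilise; once this is pinned down, the boundedness of $\Phi$ makes the interchange of limit and expectation routine, and everything else reduces to the soft limiting manipulation displayed above.
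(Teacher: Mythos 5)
Your proposal is correct and follows essentially the same route as the paper: approximate by the finite-dimensional reflected diffusions \eqref{eq:SnmR}, use their reversibility under $\mu^R$ (Proposition~\ref{prop:existencesolrevSnmR} plus the Poissonian mixture \eqref{eq:muR}), and pass to the limit by combining the pathwise stabilisation of Proposition~\ref{PropStationnarite} with the summable total-variation bound \eqref{DistanceVariationSommable} between the $\mu^R$-started and $\mu$-started approximations. The only cosmetic difference is that you work with general bounded cylinder functionals on path space while the paper tests against products $\prod_i F_i(X(t_i))$ of local functions at finitely many times, which is an equivalent measure-determining class.
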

\begin{proof}
Recall that, thanks to Propositions \ref{PropStationnarite} and \ref{PropNegligeabilite}, for any fixed admissible initial condition $\bx\in\D$, we can construct a path $X^{\bx}$ solving \eqref{eq:Sninfty} on the full set $\Omega_\bx$. 

We aim to prove that the process $X^{\boldsymbol{X}_0}$ is time-reversible as soon as $\boldsymbol{X}_0$ is a $\mu$-distributed point process independent of the Brownian motions  $(\WGrande_i)_i$'s and $(\WPiccola_k)_k$'s.

The process $X^{\boldsymbol{X}_0}$ is time-reversible on $[0,1]$ if, for any time $T$ in $[0,1]$, the backward process $\big(X^{\boldsymbol{X}_0}(T-t)\big)_{0\le t\le T}$ has the same distribution as the forward process $\big(X^{\boldsymbol{X}_0}(t)\big)_{0\le t\le T}$. 
Equivalently, one has to prove that, for any times $0\leq t_1 < \dots <  t_j \leq T, \, j\in\N$, and any bounded continuous local functions $F_1, \dots, F_j$ on $\M$,
\begin{equation*}
    \int_\D E\bigg( \prod_{i=1}^j F_i\big( X^{\bx}(T-t_i) \big) 
                   -\prod_{i=1}^j F_i\big( X^{\bx}(t_i) \big)  \bigg) \mu(\d\bx) = 0.   
\end{equation*}
Since $X^{\bx}$ was obtained in Proposition \ref{PropStationnarite} as limit of $(X^{\bx,R})_R$, 
it is sufficient to prove that
\begin{equation*}
    \lim_{R\to+\infty} \int_\D E\bigg( \prod_{i=1}^j F_i\big( X^{\bx,R}(T-t_i) \big) 
                                      -\prod_{i=1}^j F_i\big( X^{\bx,R}(t_i) \big)  \bigg) \mu(\d\bx) = 0.
\end{equation*}
Analogously to the proof of the existence of the two-type process, we split the computation of the integral term into two terms, using the measure $\mu^R$, see \eqref{eq:muR}, mixture of the measures $\nu^{m,R}$, themselves reversible under the finite-dimensional dynamics \eqref{eq:SnmR}. 
Therefore,
\begin{align*} 
&  \int_\D E\bigg( \prod_{i=1}^j F_i( X^{\bx,R}(T-t_i) ) 
                   -\prod_{i=1}^j F_i( X^{\bx,R}(t_i) ) \bigg) \mu(\d\bx) \\
& \le \underbrace{ \int_\D E\bigg( \prod_{i=1}^j F_i( X^{m(\bpiccola,R),R}(T-t_i) ) 
                                   -\prod_{i=1}^j F_i( X^{m(\bpiccola,R),R}(t_i) ) \bigg) \mu^R(\d\bx) }_{ \text{ vanishes due to reversibility
                                   % $X^{m(\bpiccola,R),R}$ starting from $\mu^R$ is reversible
                                   } }
       + \prod_{i=1}^j \|F_j\|_\infty ~ \mathbf{d}_{TV}(R).
\end{align*} 
We know from \eqref{DistanceVariationSommable} that the distance in total variation $\mathbf{d}_{TV}(R)$ is summable in $R$, thus \emph{a fortiori} it tends to zero.
This completes the proof of the time-reversibility of the solution of \eqref{eq:Sninfty} when the initial condition is $\mu$-distributed.
\end{proof}

%%%%%%%%%%%%%%%%%%%%%%%%%%%%%%%
%%%%%%%%%%%%%%%%%%%%%%%%%%%%%%%%%%%%%%%%%%%%%%%%%%%%%%%
\section{Occurrence of a new depletion interaction between hard spheres} \label{sec:depletion}
%%%%%%%%%%%%%%%%%%%%%%%%%%%%%%%%%%%%%%%%%%%%%%%%%%%%%%%%%%%%%%%%%%%%%%%%%%%%%%%%%%%%%%%%%%%

We are now interested in the projection of the two-type model considered till now onto the system of just the $n$ hard spheres. Our attention will first focus on the projection $\mugrande$ of the reversible measure $\mu$ studied in Section \ref{sec:reversible measure}. A new interaction between the hard  spheres induced by the small ones will be observed as a {\em depletion interaction}. Its specific properties are pointed out, in particular the fact that it is highly local, see Figure \ref{fig:graphd2} and Figure \ref{fig:graphd3}. We then identify an $n$-dimensional random gradient dynamics whose reversible measure is given by $\mugrande$. Finally, we prove that, asymptotically as the density $\zpiccola$ of the (hidden) particles tends to infinity, the measure $\mugrande$ concentrates around remarkable geometrical configurations: they form sphere clusters that maximise their contact number, which is part of an important -- and difficult -- topic in discrete geometry.

%%%%%%%%%%%%%%%%%%%%%%%%%%%%%%%%%%%%%%%%%%%%%%%%%%%%%%%%%%%%%%%%%%%%%%%%%%%%%%%%
\subsection{The projection of the two-type reversible measure: the occurrence of a depletion interaction}\label{sec:trace}
%%%%%%%%%%%%%%%%%%%%%%%%%%%%%%%%%%%%%%%%%%%%%%%%%%%%%%%%%%%%%%%%%%%%%%%%%%%%%%%%%%%%%%%%%%%%%%%%%%%%%%%%%%%

We study here the projection of the reversible measure $\mu$ onto the $n$ hard spheres.
\smallbreak

We first mention that in the case $n=+\infty$, S. Jansen and D. Tsagkarogiannis computed in~\cite{JT19,JT20} the projection of the {\em partition function} of a two-type grand canonical Gibbs process on the system of hard spheres. They prove the emergence of an additional induced interaction between the spheres, identified as a {\em depletion} interaction. This kind of interaction was already known and studied in the physical literature, see, e.g.,~\cite{RED00} and~\cite{LT11}. 
We adapt their computations to our setting, where the number of hard spheres is fixed to a finite $n$.
\smallbreak

Recall that we defined in Section \ref{sec:mathmodel} a slightly enlarged version of the hard spheres, 
by placing around them a spherical shell with size $\rpiccola$, called depletion shell. The new feature is that these enlarged spheres with radius $\rdep= \rgrande + \rpiccola$ may overlap pairwise, and this will be at the origin of the depletion interaction.

\begin{proposition} \label{prop:tracemesureequ}
Consider $\mu$, the reversible probability measure on $\M$ of the two-type system defined by \eqref{eq:mu}. Its projection onto the $n$ hard sphere system is a probability measure $\mugrande_{\zpiccola}$ on $\MGrande$ defined as
\begin{equation} \label{eq:muronde}
    \mugrande_{\zpiccola} (d\bgrande) = \frac{1}{Z_{\zpiccola}} \exp \big(- \zpiccola \, \nrg (\bgrande) \big) \ \un_{\D} (\bgrande) \,  \otimes_{i=1}^n \lambda (d\grande_i),
\end{equation}
where $\nrg (\bgrande)$, the energy of the configuration $\bgrande \in \M$, is given by
\[    \nrg (\bgrande) \defeq \displaystyle \vol{\BGrande(\bgrande)} = \displaystyle \vol{\bigcup_{i=1}^{n}B(\grande_i,\rdep) } .\]
\end{proposition}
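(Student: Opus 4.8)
\emph{Plan of proof.} The strategy is to integrate out the Poisson bath of particles and read off the void probability of $\pi$ on the forbidden region. Since $\mu$ is a probability measure on $\M$ (the constant $Z$ being finite), its projection onto $\MGrande$ is the pushforward of $\mu$ under the measurable map $\bx = \bgrande\bpiccola \mapsto \bgrande$, so it suffices to compute $\int_\M F(\bgrande)\,\mu(d\bx)$ for an arbitrary bounded measurable $F$ on $\MGrande$. Using the characterisation \eqref{eq:mu} of $\mu$ and the fact that $F$ does not depend on $\bpiccola$, this equals $\frac{1}{Z}\int_{\R^{nd}} F(\bgrande)\big(\int_{\MPiccola}\un_\D(\bgrande\bpiccola)\,\pi(d\bpiccola)\big)\otimes_{i=1}^n\lambda(d\grande_i)$, so the whole matter reduces to evaluating the inner $\bpiccola$-integral for a fixed admissible $\bgrande$.

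The next step is to split the admissibility constraint \eqref{eq:D}. For fixed $\bgrande$, the constraints defining $\D$ separate into the pairwise non-overlap conditions among the hard spheres — which involve $\bgrande$ only and produce a factor $\un_\D(\bgrande)$ — and the conditions $|\grande_i - \piccola_k|\ge\rdep$ for all $i,k$, which by \eqref{UnionBouees} say precisely that no particle of $\bpiccola$ lies in the interior of $\BGrande(\bgrande)=\bigcup_{i=1}^n B(\grande_i,\rdep)$. Hence $\un_\D(\bgrande\bpiccola)=\un_\D(\bgrande)\,\un_{\{\bpiccola(\mathrm{int}\,\BGrande(\bgrande))=0\}}$, and the inner integral becomes $\un_\D(\bgrande)$ times the void probability $\pi\big(\{\bpiccola:\bpiccola(\mathrm{int}\,\BGrande(\bgrande))=0\}\big)$. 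I would emphasise here that one should \emph{not} try to factorise $\prod_k\un_{\piccola_k\notin\BGrande(\bgrande)}$ over the (a.s.\ infinitely many) particles, but invoke directly the void-probability formula for the Poisson process $\pi$ of intensity $\zpiccola\,dx$: for any bounded Borel set $A$, $\pi(\{\bpiccola:\bpiccola(A)=0\})=\exp(-\zpiccola\,\mathrm{Leb}(A))$. Applying this with the bounded set $A=\mathrm{int}\,\BGrande(\bgrande)$ — a finite union of open balls of radius $\rdep$, whose boundary is a Lebesgue-null union of pieces of spheres, so that $\mathrm{Leb}(A)=\vol{\BGrande(\bgrande)}$ — gives $\int_{\MPiccola}\un_\D(\bgrande\bpiccola)\,\pi(d\bpiccola)=\un_\D(\bgrande)\,\e^{-\zpiccola\vol{\BGrande(\bgrande)}}$.

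Finally I would substitute this back and clean up. One obtains $\int_\M F(\bgrande)\,\mu(d\bx)=\frac{1}{Z}\int_{\R^{nd}}F(\bgrande)\,\un_\D(\bgrande)\,\e^{-\zpiccola\vol{\BGrande(\bgrande)}}\otimes_{i=1}^n\lambda(d\grande_i)$; taking $F\equiv 1$ shows that $Z$ coincides with $Z_\zpiccola\defeq\int_{\R^{nd}}\un_\D(\bgrande)\,\e^{-\zpiccola\vol{\BGrande(\bgrande)}}\otimes_{i=1}^n\lambda(d\grande_i)$, which lies in $(0,1]$: it is $\le 1$ because $\vol{\BGrande(\bgrande)}\ge 0$ and $\lambda$ is a probability measure, and it is $>0$ because the integrand is strictly positive on, e.g., the (open, full-support, hence positive $\otimes\lambda$-measure) set of configurations of pairwise distant hard spheres. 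Recalling $\nrg(\bgrande)=\vol{\BGrande(\bgrande)}$, this is exactly \eqref{eq:muronde}, and $\mugrande_\zpiccola$ is a probability measure. The computation is essentially routine once the void-probability identity is in hand; the only genuine care points are the correct handling of the infinitely many particles via that identity rather than an infinite product, the harmless passage between the interior and the closure of $\BGrande(\bgrande)$, and the finiteness/positivity check for $Z_\zpiccola$.
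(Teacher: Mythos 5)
Your proposal is correct and follows essentially the same route as the paper's proof: reduce to a test function $F$ depending only on $\bgrande$, split the admissibility indicator into the hard-sphere part and the particle-exclusion part, and evaluate the inner integral as the Poisson void probability $\exp(-\zpiccola\,\vol{\BGrande(\bgrande)})$ of the forbidden region. The only cosmetic difference is that the paper first localises the Poisson process to a ball $B(0,R(\bgrande))\supset\BGrande(\bgrande)$ and uses independence of the inside/outside restrictions, whereas you invoke the void-probability formula directly on the bounded set $\BGrande(\bgrande)$; your added finiteness/positivity check of $Z_\zpiccola$ is a harmless bonus.
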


\begin{proof}
Integrating the measure $\mu$ over bounded test functions $F$ supported on $\MGrande$ yield
\begin{equation*}
\begin{split}
    \int_{\MGrande}F(\bgrande)\, \mugrande_{\zpiccola} (d \bgrande) 
    &\defeq \int_{\M}F(\bgrande) \, \mu(d \bx)  = \frac{1}{Z_{\zpiccola}}\int_{\R^{dn}} F(\bgrande) \, \int_{\MPiccola} \un_{\D}(\bgrande\bpiccola)\, \pi (d \bpiccola)
    %\ast \delta_{\grande}\, 
    \ \otimes_{i=1}^n \lambda(d\grande_i)\\
    &=\frac{1}{Z_{\zpiccola}}\int_{\R^{dn}} F(\bgrande) \Big(\int_{\MPiccola} \un_{\D}(\bgrande\bpiccola_{|_{R(\bgrande)}}) %\prod_{k=1}^{n}\prod_{\piccola\in\xipiccola_{\LambdaL}}\un_{\{\abs{\grande_k-\piccola}\geq \rdep\}}
    \,%\pi_{\LambdaL}
    \pi_{|_{R(\bgrande)}}
    (d \bpiccola)\ \otimes \pi_{|_{R(\bgrande)^c}}
    %\pi_{\vert(\LambdaL)^c}
    (d \bpiccola)\Big) \un_{\D} (\bgrande)\, \otimes_{i=1}^n \lambda(d\grande_i),
\end{split}
\end{equation*}
where $R(\bgrande)$ is a radius large enough for $B\big(0,R(\bgrande)\big)$ to contain $\BGrande(\bgrande)$. Since the particles outside of $B\big(0,R(\bgrande)\big)$ are independent of the ones inside, we get
\begin{equation*}
\begin{split}
    \int_{\MGrande}F(\bgrande)\, \mugrande_{\zpiccola}(d \bgrande) 
    &=\frac{1}{Z_{\zpiccola}}\int_{\R^{dn}} F(\bgrande) \int_{\MPiccola} 
    \un_{\BGrande(\bgrande)^c} (\bpiccola) \,
    \pi_{|_{R(\bgrande)}}(d \bpiccola)\ \un_{\D} (\bgrande)\, \otimes_{i=1}^n \lambda(d\grande_i) \\
    &= \frac{1}{Z_{\zpiccola}}\int_{\R^{dn}} F(\bgrande) \exp\big(-\zpiccola\, \vol{\BGrande(\bgrande)}\big)  \un_{\D} (\bgrande)\, \otimes_{i=1}^n \lambda(d\grande_i),
\end{split}
\end{equation*}
which is equivalent to the claim.
\end{proof}
\begin{remark}
By the inclusion-exclusion rule, 
\begin{equation}\label{eq:inex}
     \nrg (\bgrande) = \sum_{k=1}^n \, \vol{B(\grande_k,\rdep)} + \sum_{k=2}^n\sum_{1\leq i_1<\dots< i_k\leq n} \phidep_k(\grande_{i_1},\dots,\grande_{i_k}),
\end{equation}
where the function $\phidep_k,\, k\geq 2$, is a symmetric translation-invariant function on $(\R^d)^k$ given by 
\begin{equation}\label{eq:kbodydepletion}
    \phidep_k(x_{1},\dots,x_{k}) : = (-1)^{k-1} \,\vol{B(x_{1},\rdep)\cap\dots\cap B(x_{k},\rdep)}.
\end{equation}
The function $\phidep_k$ is called the $k$-body \emph{depletion interaction}. It is highly dependent on the proportionality factor $\rho$ between the particle radius and the hard sphere radius.

Putting everything together yields
\begin{equation*}
    \mugrande_{\zpiccola}(d \bgrande) 
    = \exp \bigg( -\zpiccola \, \Big( n  v_d \,\rdep\rspace ^d + \sum_{k=2}^n\sum_{1\leq i_1<\dots< i_k\leq n} \phidep_k(\grande_{i_1},\dots,\grande_{i_k}) \Big) \bigg) 
    \un_{\D} (\bgrande) \, \otimes_{i=1}^n \lambda (d\grande_i),
\end{equation*}
where $v_d$ denotes the volume in $\R^{d}$ of the unit sphere.
\end{remark}

In the following lemma, we give thresholds on $\rho$ for three-body or $k$-body interactions to occur. The computation of $\rho_2$ is classical, see for example~\cite{HL02,LT11}; see also~\cite{WJL22} for a geometric computation in the case of differently-shaped bodies.

\begin{lemma} \label{lem:rho2rho3}
    The multi-body depletion interaction reduces to a pair interaction as soon as the size proportionality factor $\rho$ between the  particles and the hard spheres is bounded from above by $\rho_2\defeq \frac{2}{3}\sqrt{3}-1\simeq 0.1547$. Moreover, $4$-body depletion interactions can appear, in dimension $d=2$, only if $\rho > \rho_3\defeq \sqrt{2}-1 \simeq 0.4142$, in dimension $d\geq 3$, only if $\rho > \rho_3\defeq \sqrt{3/2}-1 \simeq 0.2247$. 
\end{lemma}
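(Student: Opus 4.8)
The plan is to translate the statement into elementary facts about the \emph{smallest enclosing ball} of the hard-sphere centres. Fix $k\ge2$ distinct points $x_1,\dots,x_k\in\R^d$. By \eqref{eq:kbodydepletion}, up to sign $\phidep_k(x_1,\dots,x_k)=\vol{\bigcap_{j=1}^k B(x_j,\rdep)}$, and this volume is positive if and only if the enlarged balls share an \emph{interior} point, i.e. if and only if some $y$ has $\max_j|y-x_j|<\rdep$, i.e. if and only if the radius $\mathfrak{r}$ of the smallest ball enclosing $\{x_1,\dots,x_k\}$ satisfies $\mathfrak{r}<\rdep$ (when $\mathfrak{r}=\rdep$ the intersection is at most a single point). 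Since $\bigcap_{j=1}^k B(x_j,\rdep)$ lies inside the intersection of any three of those balls and any sub-triple of an admissible tuple is again admissible, if no admissible \emph{triple} of enlarged balls has positive-volume intersection then neither does any admissible $k$-tuple with $k\ge3$; then \eqref{eq:inex} shows that $\nrg$ keeps only its $1$- and $2$-body terms, i.e. reduces to a pair interaction. So the first assertion reduces to computing $\inf\{\mathfrak{r}(x_1,x_2,x_3):|x_i-x_j|\ge2\rgrande,\ i\ne j\}$ and checking that $\rho\le\rho_2$ holds exactly when $\rdep$ is at most that infimum; the second reduces to the analogous, dimension-dependent infimum over quadruples.

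For the triple infimum I would distinguish the shape of the triangle $x_1x_2x_3$. If it is acute or right, $\mathfrak{r}$ equals the circumradius $R_c$; inscribing the triangle in its circumcircle, its smallest angle $\theta$ satisfies $\theta\le\pi/3$ and the shortest side, opposite $\theta$, has length $2R_c\sin\theta\ge2\rgrande$, so $R_c\ge\rgrande/\sin\theta\ge2\rgrande/\sqrt3$. If it is obtuse with longest side $c$ and remaining sides $a,b\ge2\rgrande$, then $c^2>a^2+b^2\ge8\rgrande^2$, so $\mathfrak{r}=c/2>\sqrt2\,\rgrande>2\rgrande/\sqrt3$; if the three points are collinear, $\mathfrak{r}\ge2\rgrande$. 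Hence the infimum equals $2\rgrande/\sqrt3=\tfrac23\sqrt3\,\rgrande$, attained by the equilateral triangle of side $2\rgrande$. Since $\rdep=\rgrande(1+\rho)$, the inequality $\rdep\le2\rgrande/\sqrt3$ is precisely $\rho\le\tfrac23\sqrt3-1=\rho_2$; and for $\rho>\rho_2$ that same equilateral triple already produces a positive three-body term, so $\rho_2$ is the exact threshold.

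For quadruples in dimension $d\ge3$ I would use the centroid identity $\sum_{i<j}|x_i-x_j|^2=4\sum_i|x_i-\bar x|^2$, $\bar x=\tfrac14\sum_i x_i$: if $c$ and $\mathfrak{r}$ are the centre and radius of the smallest enclosing ball of $\{x_1,\dots,x_4\}$, then $\sum_i|x_i-\bar x|^2\le\sum_i|x_i-c|^2\le4\mathfrak{r}^2$ (the centroid minimises the sum of squared distances), while admissibility forces $\sum_{i<j}|x_i-x_j|^2\ge\binom{4}{2}(2\rgrande)^2=24\rgrande^2$; combining, $\mathfrak{r}\ge\sqrt{3/2}\,\rgrande$. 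Equality requires all six pairwise distances to equal $2\rgrande$ and the centroid to be equidistant from the four points, i.e. the regular tetrahedron of edge $2\rgrande$, which embeds exactly when $d\ge3$; so the infimum is $\sqrt{3/2}\,\rgrande$ and $\rdep>\sqrt{3/2}\,\rgrande\iff\rho>\sqrt{3/2}-1=\rho_3$. In the plane the tetrahedron is unavailable, so one sharpens the bound via the convex hull of the four points: if the hull is a quadrilateral, one interior angle is $\ge\pi/2$, so the diagonal joining that vertex's two neighbours has length $\ge2\sqrt2\,\rgrande$, forcing $\mathfrak{r}\ge\sqrt2\,\rgrande$; if the hull is a triangle with the fourth point $P$ inside, one of the three angles at $P$ is $\ge2\pi/3$, and the law of cosines makes the opposite side at least $2\sqrt3\,\rgrande$, forcing $\mathfrak{r}\ge\sqrt3\,\rgrande$; if the points are collinear, $\mathfrak{r}\ge3\rgrande$. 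In every planar case $\mathfrak{r}\ge\sqrt2\,\rgrande$, with equality for the square of side $2\rgrande$ (pairwise distances $2\rgrande$ and $2\sqrt2\,\rgrande$, circumradius $\sqrt2\,\rgrande$); hence the infimum is $\sqrt2\,\rgrande$ and $\rdep>\sqrt2\,\rgrande\iff\rho>\sqrt2-1=\rho_3$. One may instead quote the classical solution of the problem of placing four points as far apart as possible inside a disk.

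The bookkeeping of the first two paragraphs and the $d\ge3$ step are routine: the enclosing-radius reformulation and the variance identity do nearly all the work there. The one genuinely delicate point — and, I expect, the main obstacle — is the planar four-point optimisation, i.e. showing the square is optimal, which forces the convex-hull case split above together with a little care over the obtuse and degenerate subcases. It is the $n=4$ instance of a classical extremal problem in plane geometry; elementary, but the only part that is not an immediate consequence of a standard identity.
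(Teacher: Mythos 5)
Your proposal is correct, and at its core it follows the same route as the paper: both identify the thresholds by computing when the circumcentre of the equilateral triangle of side $2\rgrande$ (resp.\ the square, resp.\ the regular tetrahedron) comes within distance $\rdep$ of the vertices. The difference is that the paper simply \emph{asserts} that these are the extremal configurations (``the lowest value of $\rho$ \dots can be computed by considering $3$ spheres whose centres lie on the vertices of an equilateral triangle''), whereas you actually prove it: the reformulation via the smallest enclosing ball, the acute/obtuse/collinear split for triples, the variance identity for quadruples in $d\ge3$, and the convex-hull case analysis for four points in the plane are all correct and supply exactly the optimality arguments the paper omits. The point you flag as delicate -- showing the square minimises the enclosing radius among admissible planar quadruples -- is indeed not addressed in the paper at all, so your write-up is strictly more complete; the containment argument reducing $k$-body terms ($k\ge3$) to the triple case matches the paper's (implicit) logic and is sound.
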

\begin{figure}[ht]
\begin{center}
\begin{subfigure}[b]{0.2\textwidth}
    \includegraphics[height=\textwidth]{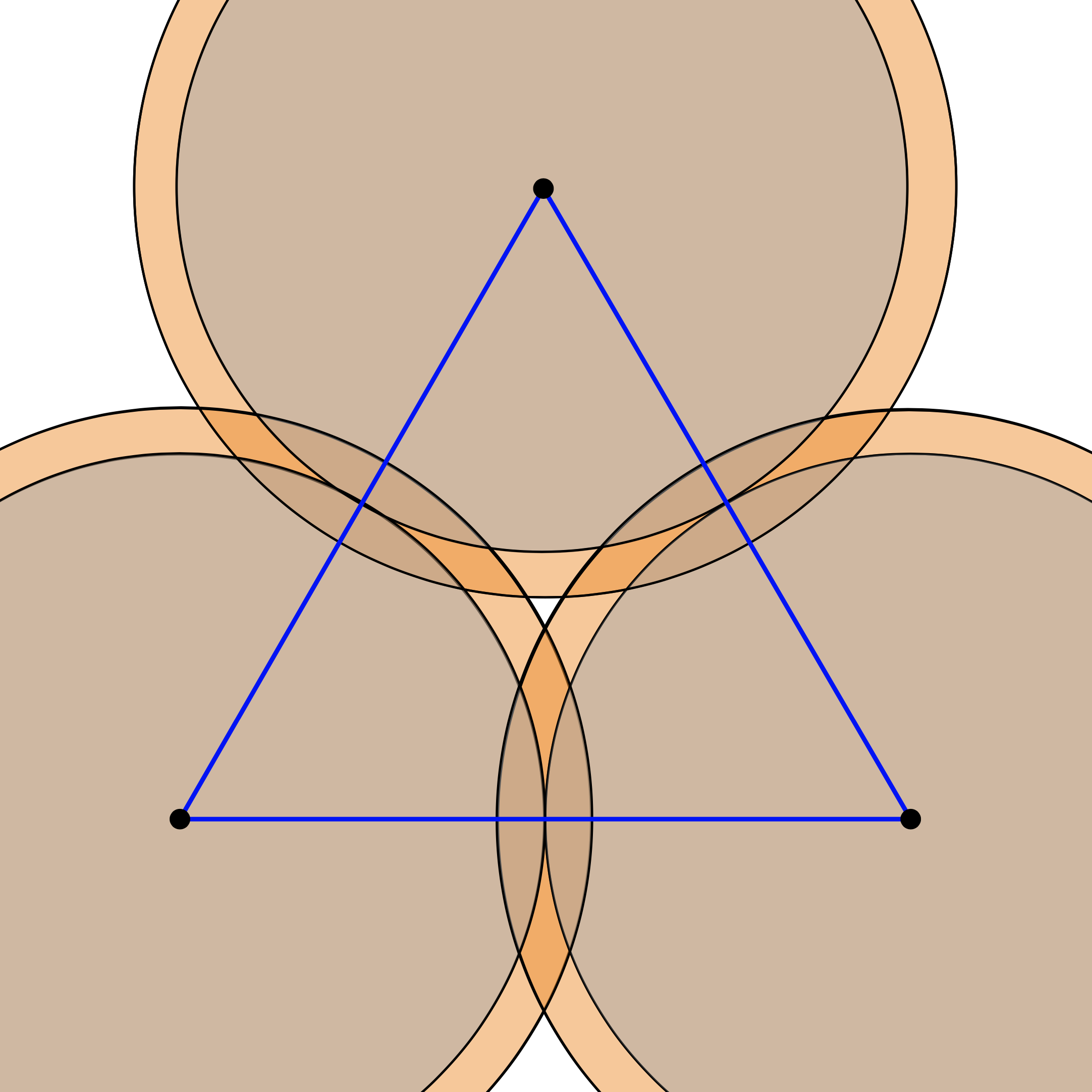}\hspace{1em}
    \caption{ }
    \label{subfig-1:dummy}
\end{subfigure}\hspace{2em}
\begin{subfigure}[b]{0.2\textwidth}
    \includegraphics[height=\textwidth]{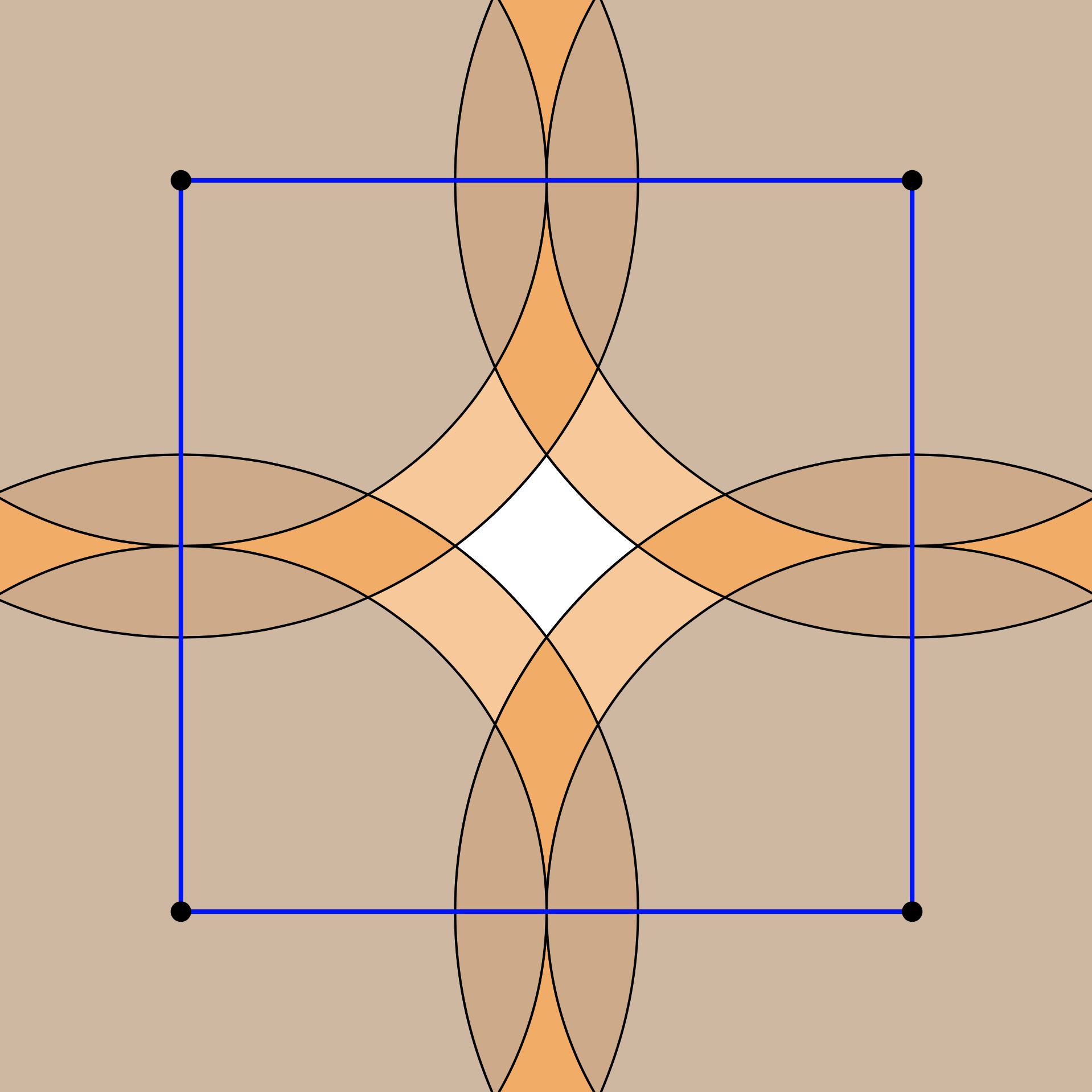}\hspace{1em}
    \caption{ }
    \label{subfig-2:dummy}
\end{subfigure}
\begin{subfigure}[b]{0.2\textwidth}
    \includegraphics[height=\textwidth]{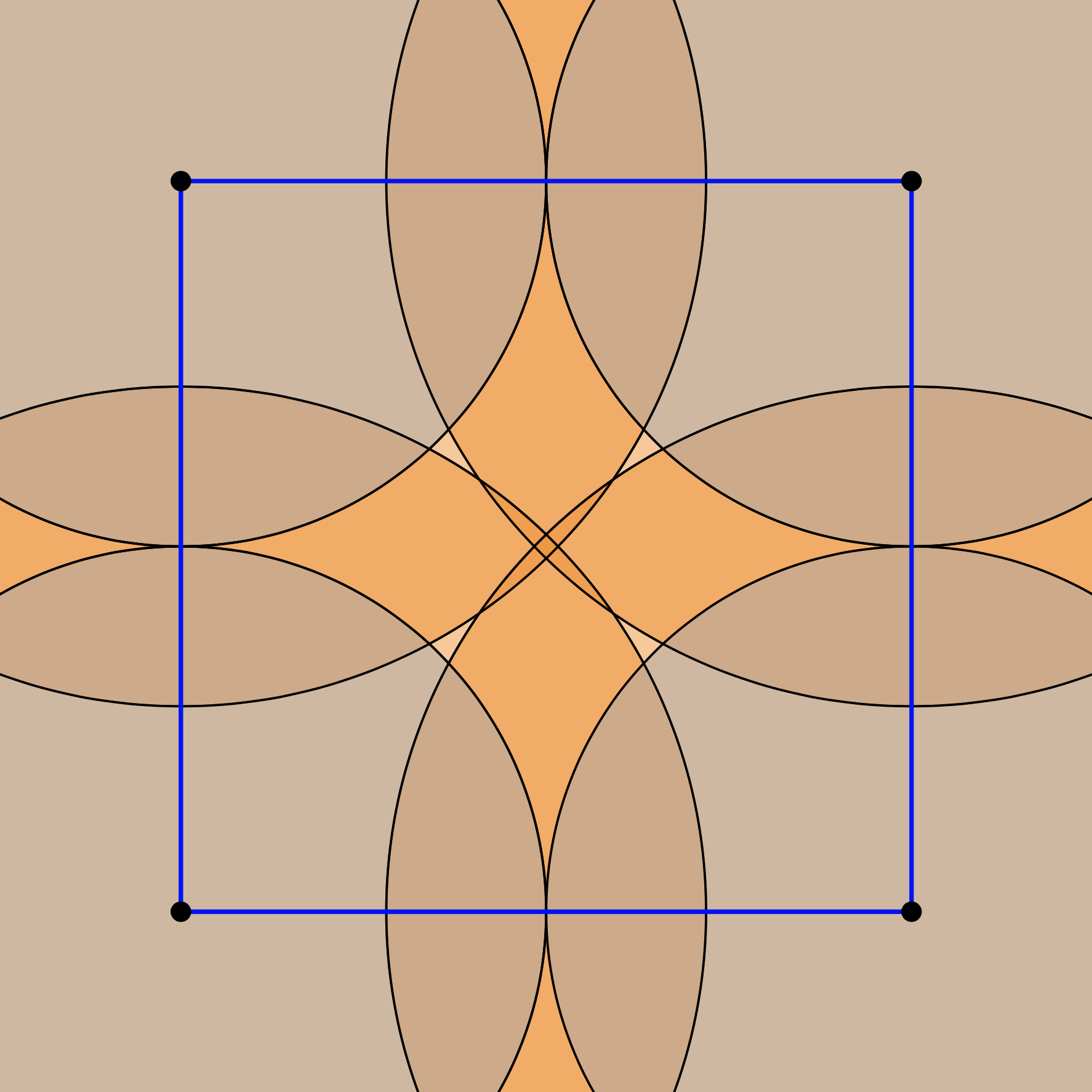}\hspace{1em}
    \caption{ }
    \label{subfig-3:dummy}
\end{subfigure}
	\end{center}
\caption{Examples of hard discs with depletion shell; the shaded areas represent the overlap between the depletion shells.
{(a) For $\rho$ smaller but close to the critical case $\rho_2$, where only pair interactions occur;}
{(b) For $\rho_2<\rho\leq\rho_3$, there cannot be four-body interactions; (c) For $\rho >\rho_3$, four-body interactions occur.}}\label{fig:3spheres}
\end{figure}

\begin{proof}
We shortly recall why the $k$-body interactions $\phidep_k, k \geq 3$, vanish if and only if $\rho \leq \rho_2$.
Indeed, in any dimension $d\geq 2$, the critical value of $\rho$ for multiple depletion shells to overlap can be computed by considering $3$ spheres whose centres lie on the vertices of an equilateral triangle of side-length $2\, \rgrande$, see Figure \ref{fig:3spheres} (a).
There is overlap as soon as the centre of the triangle is at distance $\rdep= \rgrande (1+\rho)$ from the vertices, that is if and only if
 $  \rho >\rho_2\defeq \frac{2}{3}\sqrt{3}-1\simeq 0.1547$.

The value for $\rho_3$ depends on the dimension $d$.
In dimension $d=2$, the lowest value of $\rho$ for more than three depletion shells to overlap is obtained by considering four spheres whose centres lie on the vertices of a square of side length $2\rgrande$, see Figure \ref{fig:3spheres} (b-c).
So, there is a $4$-way overlap if and only if
\begin{equation*}
    2\sqrt{2}\, \rgrande < 2\rdep \iff \rho > \rho_3\defeq \sqrt{2}-1\simeq 0.4142 \, .
\end{equation*}
In dimension $d\geq 3$, the smallest value of $\rho$ is obtained by considering four spheres whose centres lie on the vertices of a regular tetrahedron of side-length $2\rgrande$. So, there is a $4$-way overlap if and only if the distance between any vertex and the center of mass of the tetrahedron is smaller than $\rdep$, that is
\begin{equation*}
    \sqrt{3/2}\, \rgrande < \rdep \iff \rho > \rho_3\defeq \sqrt{3/2}-1 \simeq 0.2247 \, .
\end{equation*}
This concludes the proof.
\end{proof}

Let us compute the pair depletion interaction $\phidep_2$ in any dimension.

\begin{lemma} \label{def:depletion_potential_dimd}
The pair depletion interaction is attractive, radial and acts only at very close range. More precisely, its expression in dimension $d$ is given by the following integral: for any $\ x_i,x_j \in \R ^d$,
\begin{align*}
\phidep_2(x_{i},x_{j}) = -  
2\,v_{d-1} ~ \rdep\rspace ^d \int_0^{\arccos(u)}  (\sin\theta)^d \,d\theta 
\ \un_{[\frac{1}{1+\rho},1]} (u) , 
\text{ where } u\defeq \frac{|x_i - x_j|}{ 2 \rdep}.
\end{align*}
In particular, $\phidep_2(x_{i},x_{j})\neq 0$ only if $|x_i - x_j|$ is between $2\rgrande$ and $2\rdep$.
\end{lemma}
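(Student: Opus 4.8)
The plan is to unwind the definition \eqref{eq:kbodydepletion} in the case $k=2$, which reads $\phidep_2(x_i,x_j) = -\vol{B(x_i,\rdep)\cap B(x_j,\rdep)}$, and then to evaluate this volume of a \emph{lens} (intersection of two equal balls) by an elementary slicing argument. Writing $\ell \defeq |x_i - x_j|$ and $u = \ell/(2\rdep)$, the first observation is that $B(x_i,\rdep)$ and $B(x_j,\rdep)$ intersect with positive volume precisely when $\ell < 2\rdep$, i.e. $u<1$; moreover, on an admissible configuration $\bgrande \in \D$ the constraint $\ell \ge 2\rgrande$ forces $u \ge \rgrande/\rdep = 1/(1+\rho)$. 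This accounts for the indicator $\un_{[1/(1+\rho),1]}(u)$ and, at the same time, shows that $\phidep_2$ is supported on the thin shell $\{2\rgrande \le \ell \le 2\rdep\}$ --- the \emph{strong locality} --- and that it depends on $(x_i,x_j)$ only through $\ell$ --- its \emph{radiality}.

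For the volume itself I would place the two centres symmetrically on an axis, at $\pm \tfrac{\ell}{2}e_1$. Since for $x_1 \ge 0$ one has $|x + \tfrac{\ell}{2}e_1| \ge |x - \tfrac{\ell}{2}e_1|$, a point of the half-space $\{x_1 \ge 0\}$ lies in both balls if and only if it lies in the ball centred at $-\tfrac{\ell}{2}e_1$; hence, by the reflection symmetry across the bisecting hyperplane $\{x_1 = 0\}$, the lens has volume twice that of the spherical cap $\{x_1 \ge 0\}\cap B(-\tfrac{\ell}{2}e_1,\rdep)$. Slicing this cap by the hyperplanes perpendicular to the axis at signed distance $t\in[\ell/2,\rdep]$ from that ball's centre --- whose section is a $(d-1)$-ball of radius $\sqrt{\rdep^2 - t^2}$ --- gives
\[
 \vol{B(x_i,\rdep)\cap B(x_j,\rdep)} \;=\; 2\,v_{d-1}\int_{\ell/2}^{\rdep}\bigl(\rdep^2 - t^2\bigr)^{\frac{d-1}{2}}\,dt .
\]
The substitution $t = \rdep\cos\theta$ turns the integrand into $\rdep^{\,d-1}(\sin\theta)^{d-1}$ and $dt$ into $-\rdep\sin\theta\,d\theta$, while the limits $t=\ell/2$ and $t=\rdep$ become $\theta = \arccos u$ and $\theta = 0$; collecting the powers of $\rdep$ yields exactly $2\,v_{d-1}\,\rdep^{\,d}\int_0^{\arccos u}(\sin\theta)^d\,d\theta$, and the overall minus sign from \eqref{eq:kbodydepletion} produces the claimed formula.

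Finally, to justify the word \emph{attractive} I would remark that $\phidep_2 \le 0$ everywhere and that $u \mapsto \int_0^{\arccos u}(\sin\theta)^d\,d\theta$ is strictly decreasing on $[0,1]$, so $\phidep_2$ is strictly increasing in the interparticle distance on the support $[2\rgrande,2\rdep]$; together with the Gibbsian form $\mugrande_{\zpiccola} \propto \e^{-\zpiccola\nrg}$ of \eqref{eq:muronde}--\eqref{eq:inex}, this says that configurations with overlapping depletion shells are energetically favoured, i.e. the induced pair term pulls the hard spheres together. None of these steps is genuinely difficult; the only points needing a little care are the reduction of the lens to twice a spherical cap (the half-space comparison above) and bookkeeping the powers of $\rdep$ and the integration limits through the change of variables.
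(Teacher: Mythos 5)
Your proposal is correct and follows essentially the same route as the paper: reduce the lens to twice a spherical cap by the bisecting-hyperplane symmetry, slice the cap into $(d-1)$-balls of radius $\sqrt{\rdep^2-t^2}$, and substitute $t=\rdep\cos\theta$ to obtain $2\,v_{d-1}\,\rdep^{\,d}\int_0^{\arccos u}(\sin\theta)^d\,d\theta$. The only cosmetic difference is the placement of the centres (symmetric about the origin versus one centre at the origin), and your justification of the indicator via the admissibility constraint matches the paper's restriction of $\Vovlap$ to $u\ge 1/(1+\rho)$.
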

\begin{proof}
Note first that the function $\phidep_2(x_{i},x_{j}) = - \vol{B(x_i,\rdep)\cap B(x_j,\rdep)}$ is non-positive and therefore the pair depletion interaction is attractive.\\
Moreover, it only depends on the distance between both points $x_i$ and $x_j$ in $\R ^d$. 
Therefore, to simplify the notations we define the (rescaled) function $\Vovlap$  as follows:
\begin{equation}\label{def:depletion_potential}
\Vovlap(u)\defeq - \phidep_2(x_{i},x_{j}) 
%= \vol{B(\grande_i,\rdep)\cap B(\grande_j,\rdep)}
  \textrm{ with } u= \frac{|x_i - x_j|}{ 2 \rdep} \in \Big[\frac{\rgrande}{\rdep},+\infty \Big) .
\end{equation}
The function $\Vovlap$ is clearly decreasing from its maximal value $\Vovlap^*$, attained at $u= \frac{\rgrande}{\rdep}=\frac{1}{1+\rho}$, to its minimal value $0$, attained at $u=1$. It vanishes on $[1,+\infty)$, underlining the strong locality of this pair interaction. 
We now compute it.

By shift invariance and symmetry, denoting by $(e_1,\ldots,e_d)$ an orthonormal basis of $\R^d$, we find
\begin{align*}
\Vovlap(u) 
& = \vol{B(0,\rdep)\cap B(2\rdep u ~ e_1,\rdep)} \\
& = 2\, \vol{ \left\{ x\in \R^d, |x|\leq \rdep \text{ and } x\cdot e_1\ge \rdep u \right\} }\\
& = 2\, \vol{ \bigg\{ x\in\R^d, \sum_{i=2}^d (x\cdot e_i)^2 \le \rdep\rspace ^2 - (x\cdot e_1)^2 
\text{ and }  x\cdot e_1\ge \rdep u  \bigg\} } .
\end{align*}
Introducing $v_{d-1}$, the volume of the unit sphere in $\R^{d-1}$, one gets, for $\frac{1}{1+\rho} \leq u\leq  1$, 
\begin{align*}
\Vovlap(u) 
& = 2\int_{\rdep u}^{\rdep} v_{d-1} \left( \sqrt{\rdep\rspace ^2 -t^2} \right)^{\hspace{-2mm}{d-1}} ~ dt 
= 2v_{d-1} ~ (\rdep\rspace^{2})^{\frac{d-1}{2}} \int_{u}^{1} \left( \sqrt{ 1-s^2} \right)^{\hspace{-1mm}{d-1}} \rdep \, ds \\
&= 2v_{d-1} ~ \rdep\rspace ^{d} \int_0^{\arccos(u)} \left( \sin\theta \right)^d \, d\theta. \qedhere
\end{align*}
\end{proof}

%%%%%%%%%%%%%%%%%%%%%%%%%%%%%%%%%%%%%%%%%%%%%%%%%%%%%%%%%%%%%%%%%%%%%%%%%%%%%%%%%%%%
\begin{example}[\textbf{The pair depletion interaction between discs in $\R^2$}] \label{ex:depinterdR2}
{\hspace{.1em}\\[1mm]}
%%%%%%%%%%%%%%%%%%%%%%%%%%%%%%%%%%%%%%%%%%%%%%%%%%%%%%%%%%%%%%%%%%%%%%%%%%%%%%%%%%%%

Applying Lemma \ref{def:depletion_potential_dimd}, one can compute explicitly the integral in the case $d=2$ to obtain, for $u= \frac{|x_i - x_j|}{ 2 \rdep}\in [\frac{1}{1+\rho},1]$,
\begin{equation*}
\phidep_2(x_{i},x_{j}) = - 2\,v_{1} ~ \rdep\rspace ^2 
\int_0^{\arccos(u)}  (\sin\theta)^2 \,d\theta  = - 2\rdep\rspace ^2 \, \big(\arccos(u) - u\sqrt{1-u^2}\, \big).
\end{equation*}
It can also be computed directly via the following simple planar geometry argument (cf. Figure~\ref{fig:area}).

\begin{figure}
\begin{center}
    \includegraphics[width=.3\textwidth]{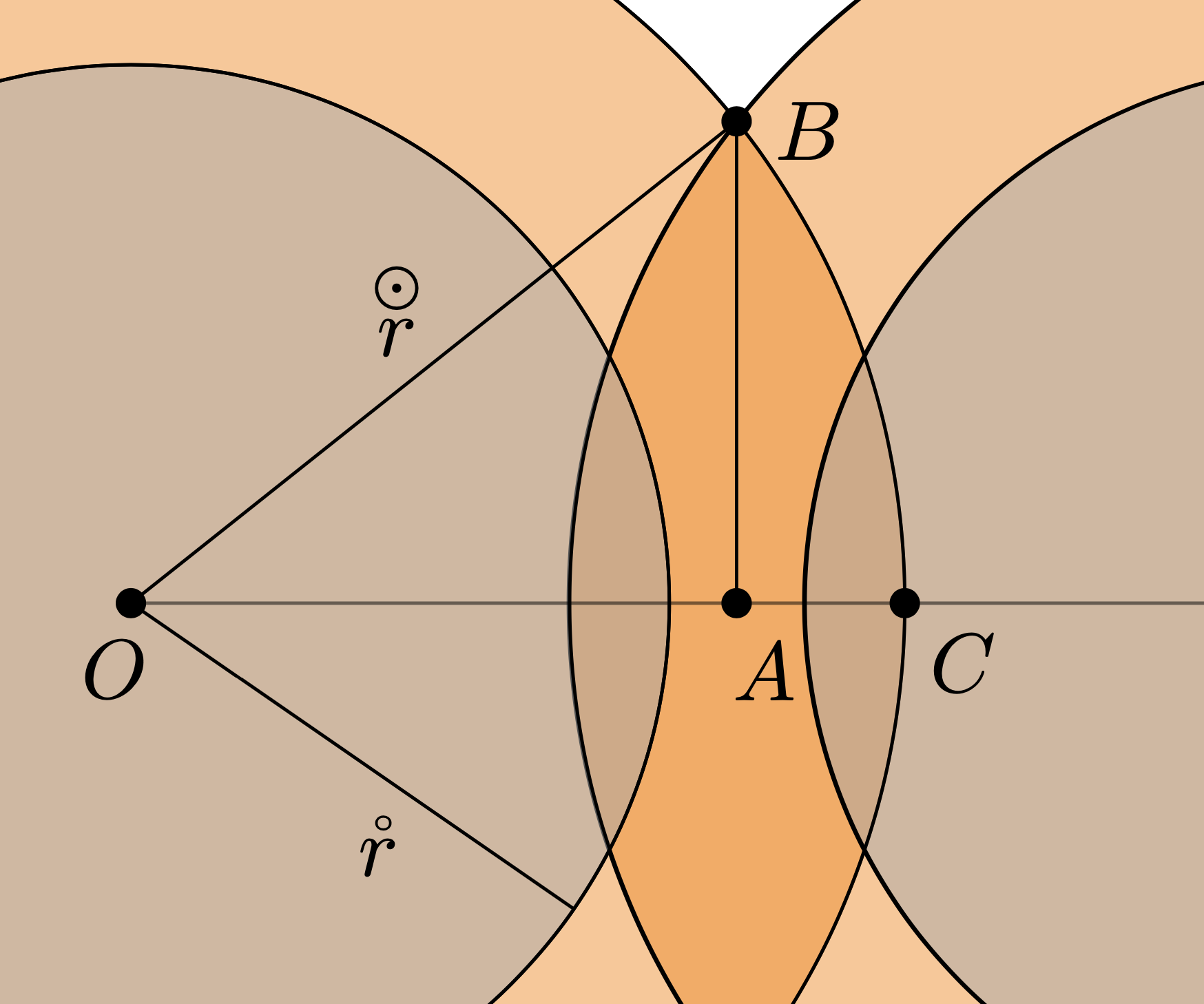}
\end{center}
\caption{Intersection of two depletion discs.}\label{fig:area}
\end{figure}  

In the plane, consider a disc centred at a point $O \in \R^2$, with radius $\rdep$, and suppose it intersects another disc of the same radius and whose centre is at distance $2\rdep\, u$ from $O$.

Denote by $\mathcal{A}_{\text{sec}}(u)$, the area of the circular sector in $OBC$. Consider then
the area $\mathcal{A}_{\text{tri}}(u)$ of the rectangular triangle $OBC$. One has
\begin{equation*}
    \mathcal{A}_{\text{sec}}(u) = \frac{1}{2}\,\arccos(u) \,\rdep\rspace ^2,
    \quad \mathcal{A}_{\text{tri}}(u) = \frac{1}{2}\overline{OA}\ \overline{AB} 
    =\frac{\rdep\,\! ^2}{2}\, u\sqrt{1-u^2}.
\end{equation*}
The area of the overlap is then given by 
\begin{equation} \label{eq:Vovlapd2}
 \Vovlap(u) = 4\big(\mathcal{A}_{\text{sec}}(u) - \mathcal{A}_{\text{tri}}(u)\big) 
 = 2\rdep\rspace ^2 \big( \arccos(u) - u\sqrt{1-u^2}\, \big) ,
\end{equation}
see Figure \ref{fig:graphd2}, left.
The maximal overlap area is
\begin{equation} \label{eq:maxVovlapd2}
\begin{aligned}[t]
\Vovlap^*%\defeq \max_{u\geq \rgrande/\rdep }\Vovlap(u) 
= \Vovlap\Big(\frac{1}{1+\rho}\Big)
 &=
  2\rdep\rspace ^2 \Big( \arccos\big(\frac{1}{1+\rho}\big) - \frac{1}{(1+\rho)^2}\sqrt{\rho (2 + \rho)}\, \Big).
\end{aligned}
\end{equation}
Moreover, the derivative on $(\frac{1}{1 + \rho}, 1)$ of the function $\Vovlap$ is given by
\begin{equation*}
   \Vovlap'(u) = 
    - 4 \rdep\rspace ^2  \sqrt{1-u^2}
\end{equation*}
which vanishes in  $1$ (see Figure \ref{fig:graphd2}, right). Therefore, the function $\Vovlap$ is of class $C^1$ over the full interval $[\frac{1}{1 + \rho},+\infty)$.
Nevertheless, its second derivative explodes at the point $u=1$, since
\begin{equation*}
   \Vovlap''(u) = 
     4 \, \rdep\rspace ^2  \frac{u}{\sqrt{1-u^2}},\quad \frac{1}{1 + \rho} \leq u < 1.
\end{equation*}

\begin{figure}
\vspace{.5em}
   \begin{center}
\begin{subfigure}[b]{.45\textwidth}
\begin{tikzpicture}
\begin{groupplot}[
    group style={
        group name=my fancy plots,
        group size=2 by 1,
        xticklabels at=edge bottom,
        horizontal sep=-4pt
    },
    width=.45\textwidth,
    xmin=0, xmax=6
]
\nextgroupplot[xmin=-3,xmax=15,
			   ymin=-.03,ymax=1,
               ytick={0},
               %hide axis,
               axis x line* = middle,
               %hide y axis,
               xtick=\empty,
               xticklabels=\empty,
               %extra x ticks = {1},
               extra x tick style = {xshift = 1.92ex,yshift=-0.5ex},
               extra y ticks = {0.59, 0.98},
               extra y tick style={tickwidth=0mm},
               %extra y tick style={tick label style = {below,yshift=1.5ex, xshift = -2ex}},
               extra y tick labels = {\footnotesize $\Vovlap^*$, \footnotesize $\Vovlap(u)$},
               axis y line=middle,
               height=5cm,
               width=2.2cm,
               enlargelimits=0.02
               ]
\draw[lightgray] (6,6.2) -- (170,6.2); 
\addplot[domain=0:9.2,thick,blue] {28};
\nextgroupplot[xmin=0.65,xmax=1.1,
	           ymin=-.03,ymax=1,
               ytick={0,10,20},
               xlabel = \footnotesize $u$,
               xtick=\empty,
               xticklabels=\empty,
               extra x ticks={0.7,1},	
               extra x tick labels = {\footnotesize $\frac{1}{1+\rho}$,\footnotesize 1},
               hide y axis,
               axis y line=middle,
               axis x line=middle,
               %axis x discontinuity=parallel,
               height=5cm,
               width=6cm,
               enlargelimits=0.02
               ]
\node[inner ysep=3pt,inner xsep=1.5pt,fill=white](D)at(1,3){};
\draw(D.south west)--(D.north west) (D.south east)--(D.north east);
\addplot[mark=*,mark size=1pt,blue] coordinates{(0.7,0.59)} node[anchor=south west]{};
\addplot[domain=0.7:1,samples=200,thick,blue] {2*rad(acos(\x)) - 2*\x*sqrt(1-\x^2)};
\addplot[domain=1:1.09,samples=2,thick,blue] {0};
\draw[lightgray] (50,0) -- (50,62); 
\draw[lightgray] (0,62) -- (50,62);               
\end{groupplot}
\end{tikzpicture}
\end{subfigure}
\begin{subfigure}[b]{.45\textwidth}
\begin{tikzpicture}
\useasboundingbox (-1.5,3) rectangle (5,-.67);
\begin{groupplot}[
    group style={
        group name=my fancy plots,
        group size=2 by 1,
        xticklabels at=edge bottom,
        horizontal sep=-4pt
    },
    width=.45\textwidth,
    xmin=0, xmax=6
]
\nextgroupplot[xmin=-3,xmax=15,
			   ymin=-3,ymax=.8,
               ytick={0},
               %hide axis,
               axis x line* = middle,
               %hide y axis,
               xtick=\empty,
               xticklabels=\empty,
               %extra x ticks = {1},
               extra x tick style = {xshift = 1.92ex,yshift=-0.5ex},
               %extra y tick style={tick label style = {below,yshift=1.5ex, xshift = -2ex}},
               extra y ticks = {.68},
               extra y tick labels = {\footnotesize $\Vovlap'(u)$},
               extra y tick style={tickwidth=0mm},
               axis y line=middle,
               height=5cm,
               width=2.2cm,
               enlargelimits=0.02
               ]
%\draw[lightgray] (6,6.2) -- (170,6.2); 
\addplot[domain=0:9.2,thick,blue] {28};         
%\addplot[domain=1.3:2.3,thick,blue] {16*(1.5/(x))^(12)-16*(1.5/(x))^(6)};

\nextgroupplot[xmin=0.65,xmax=1.1,
	           ymin=-3,ymax=.8,
               ytick={0,10,20},
               xlabel = \footnotesize $u$,
               xtick=\empty,
               xticklabels=\empty,
               extra x ticks={0.7,1},
               extra x tick style={tick label style = {above, yshift=.5ex}},	
               extra x tick labels = {\footnotesize $\frac{1}{1+\rho}$,\footnotesize 1},
               hide y axis,
               axis y line=middle,
               axis x line=middle,
               %axis x discontinuity=parallel,
               height=5cm,
               width=6cm,
               enlargelimits=0.02
               ]
\node[inner ysep=3pt,inner xsep=1.5pt,fill=white](D)at(1,300){};
\draw(D.south west)--(D.north west) (D.south east)--(D.north east);
\addplot[mark=*,mark size=1pt,blue] coordinates{(0.7,-2.85)} node[anchor=south west]{};
\addplot[domain=0.7:1,samples=200,thick,blue] {-4*sqrt(1-\x^2)};
\addplot[domain=1:1.09,samples=2,thick,blue] {0};
\draw[lightgray] (50,13) -- (50,300); 
%\draw[lightgray] (0,62) -- (50,62);               
\end{groupplot}
%\draw (current bounding box.south east) rectangle (current bounding box.north west);
\end{tikzpicture}
\end{subfigure}
\end{center}
\caption{Behaviour of $u \mapsto \Vovlap(u)$ (left) and  $u \mapsto \Vovlap'(u)$ (right) for discs in the plane.}
    \label{fig:graphd2}
\end{figure}
    
\begin{remark}
If the size of the particles is large enough ($\rho > \rho_2$), then a three-body depletion interaction can appear. Its explicit computation is done in~\cite[Equation~2.2]{K78}:
\begin{eqnarray*}
    \phidep_3(x_1,x_2,x_3) &=&  
\frac{1}{2} \bigg( \sum_{ \{i,j\} \subset \{1,2,3\}}  \Vovlap\Big(\frac{|x_i - x_j|}{2 \rdep}\Big)   - \pi \rdep\rspace ^2 \\
&& + 
\frac{1}{2} \sqrt{4 |x_1 - x_2|^2|x_1 - x_3|^2 - 
\big(|x_1 - x_2|^2+|x_1 - x_3|^2 - |x_2 - x_3|^2\big)^2} \,  \bigg).    
\end{eqnarray*}    
\end{remark}
\end{example}
%%%%%%%%%%%%%%%%%%%%%%%%%%%%%%%%%%%%%%%%%%%%%%%%%%%%%%%%%%%%%%%%%%%%%%%%%%%%%%%%%%%%
\begin{example}[\textbf{The pair depletion interaction between balls in $\R^3$}] \label{ex:depinterdR3}
{\hspace{.1em}\\[1mm]}
%%%%%%%%%%%%%%%%%%%%%%%%%%%%%%%%%%%%%%%%%%%%%%%%%%%%%%%%%%%%%%%%%%%%%%%%%%%%%%%%%%%%

Applying Lemma \ref{def:depletion_potential_dimd} in the case $d=3$, one can explicitly compute the integral and obtain, for $u\in [\frac{1}{1 + \rho},1]$:
\begin{equation*} \label{eq:Vovlapd3}
 \Vovlap (u)
=  2\,v_{2} ~ \rdep\rspace ^3 \int_0^{\arccos(u)}  (\sin\theta)^3 \,d\theta  
=  2 \pi\,\rdep\rspace ^3 \int_u^1  (1-s^2) \,ds
=  \frac{4\pi}{3} \rdep\rspace ^3 ( 1 -  u)^2 (1+\frac{u}{2}),
\end{equation*}
see also, e.g.,~\cite{Vr76}. 
Its maximal value is given by
\begin{equation} \label{eq:maxVovlapd3}
    \Vovlap^*=  \Vovlap\Big(\frac{1}{1 + \rho}\Big)= 
2\pi \, \rgrande\rspace ^3 \, \rho^2 \big(1 + \frac{2}{3} \rho\big).
\end{equation}
Its first derivative satisfies
\begin{equation*}
   \Vovlap '(u) = - 2\, \pi \, \rdep\rspace ^3 \big( 1 -  u^2 \big), \quad \frac{1}{1 + \rho}< u < 1.
\end{equation*}
This expression vanishes in  $1$ which implies -- as in dimension $2$ -- the $C^1$-regularity of the function $\Vovlap$ over the full intervall $ [\frac{1}{1 + \rho},+\infty)$, see Figure \ref{fig:graphd3}, left.
Its second derivative  on $(\frac{1}{1 + \rho}, 1)$ satisfies $ \Vovlap''(u) = 4\, \pi \, \rdep\rspace ^3 u$ which does not vanish at $u=1$ but remains bounded, contrary to the behaviour in dimension $2$, see Figure \ref{fig:graphd3}, right.

\begin{figure}[ht]
\vspace{1em}
\begin{center}
\begin{subfigure}[b]{.45\textwidth}
\begin{tikzpicture}
\begin{groupplot}[
    group style={
        group name=my fancy plots,
        group size=2 by 1,
        xticklabels at=edge bottom,
        horizontal sep=-4pt
    },
    width=.45\textwidth,
    xmin=0, xmax=6
]
\nextgroupplot[xmin=-3,xmax=15,
			   ymin=-.03,ymax=.2,
               ytick={0},
               %hide axis,
               axis x line* = middle,
               %hide y axis,
               xtick=\empty,
               xticklabels=\empty,
               %extra x ticks = {1},
               extra x tick style = {xshift = 1.92ex,yshift=-0.5ex},
               extra y ticks = {.121,.195},
               extra y tick labels = {\footnotesize $\Vovlap^*$, \footnotesize $\Vovlap(u)$},
               extra y tick style={tickwidth=0mm},
               %extra y tick style={tick label style = {below,yshift=1.5ex, xshift = -2ex}},
               %extra y tick labels = {\footnotesize $\Vovlap^*$},
               axis y line=middle,
               height=5cm,
               width=2.2cm,
               enlargelimits=0.02
               ]
\draw[lightgray] (0,1.51) -- (170,1.51); 
\addplot[domain=0:9.2,thick,blue] {28};
\nextgroupplot[xmin=0.65,xmax=1.1,
	           ymin=-.03,ymax=.2,
               ytick={0},
               xlabel = \footnotesize $u$,
               xtick=\empty,
               xticklabels=\empty,
               extra x ticks={0.7,1},	
               extra x tick labels = {\footnotesize $\frac{1}{1+\rho}$,\footnotesize 1},
               hide y axis,
               axis y line=middle,
               axis x line=middle,
               %axis x discontinuity=parallel,
               height=5cm,
               width=6cm,
               enlargelimits=0.02
               ]
\node[inner ysep=3pt,inner xsep=1.5pt,fill=white](D)at(1,30){};
\draw(D.south west)--(D.north west) (D.south east)--(D.north east);
\addplot[mark=*,mark size=1pt,blue] coordinates{(0.7,.121)} node[anchor=south west]{};
\addplot[domain=0.7:1,samples=200,thick,blue] {(1-\x)^2 * (1+0.5*\x)};
\addplot[domain=1:1.09,samples=2,thick,blue] {0};
\draw[lightgray] (50,25) -- (50,151); 
\draw[lightgray] (0,151) -- (50,151);               
\end{groupplot}
%\draw (current bounding box.south east) rectangle (current bounding box.north west);
\end{tikzpicture}
\end{subfigure}
\begin{subfigure}[b]{.45\textwidth}
\begin{tikzpicture}
\useasboundingbox (-1.5,3) rectangle (5,-.4);
\begin{groupplot}[
    group style={
        group name=my fancy plots,
        group size=2 by 1,
        xticklabels at=edge bottom,
        horizontal sep=-4pt
    },
    width=.45\textwidth,
    xmin=0, xmax=6
]
\nextgroupplot[xmin=-3,xmax=15,
			   ymin=-3.5,ymax=.8,
               ytick={0},
               %hide axis,
               axis x line* = middle,
               %hide y axis,
               xtick=\empty,
               xticklabels=\empty,
               %extra x ticks = {1},
               extra y ticks = {.61},
               extra y tick labels = {\footnotesize $\Vovlap'(u)$},
               extra y tick style={tickwidth=0mm},
               extra x tick style = {xshift = 1.92ex,yshift=-0.5ex},
               %extra y tick style={tick label style = {below,yshift=1.5ex, xshift = -2ex}},
               axis y line=middle,
               height=5cm,
               width=2.2cm,
               enlargelimits=0.02
               ]
%\draw[lightgray] (6,6.2) -- (170,6.2); 
\addplot[domain=0:9.2,thick,blue] {28};         
%\addplot[domain=1.3:2.3,thick,blue] {16*(1.5/(x))^(12)-16*(1.5/(x))^(6)};
%
\nextgroupplot[xmin=0.65,xmax=1.1,
	        ymin=-3.5,ymax=.8,
                xlabel = \footnotesize $u$,
                ytick={0,10,20},
                xtick=\empty,
                xticklabels=\empty,
                extra x ticks={0.7,1},
                extra x tick style={tick label style = {above, yshift=.5ex}},	
                extra x tick labels = {\footnotesize $\frac{1}{1+\rho}$,\footnotesize 1},
                hide y axis,
                axis y line=middle,
                axis x line=middle,
                %axis x discontinuity=parallel,
                height=5cm,
                width=6cm,
                enlargelimits=0.02]
\node[inner ysep=3pt,inner xsep=1.5pt,fill=white](D)at(1,350){};
\draw(D.south west)--(D.north west) (D.south east)--(D.north east);
\addplot[mark=*,mark size=1pt,blue] coordinates{(0.7,-3.2)} node[anchor=south west]{};
\addplot[domain=0.7:1,samples=200,thick,blue] {-2*pi*(1-\x^2)};
\addplot[domain=1:1.09,samples=2,thick,blue] {0};
\draw[lightgray] (50,30) -- (50,350); 
%\draw[lightgray] (0,62) -- (50,62);               
\end{groupplot}
%\draw (current bounding box.south east) rectangle (current bounding box.north west);
\end{tikzpicture}
\end{subfigure}
\end{center}
\caption{Behaviour of $u \mapsto \Vovlap(u)$ (left) and  $u \mapsto \Vovlap'(u)$ (right) for balls in $\R^3$.}
\label{fig:graphd3}
\end{figure}

Notice that, if the size of the particles is large enough ($\rho > \rho_2$), then a three-body depletion interaction can appear. K.~W.~Kratky computes it implicitly in~\cite[Equation~1.2b]{K81}.
\end{example}
%%%%%%%%%%%%%%%%%%%%%%%%%%%%%%%%%%%%%%%%%%%%%%%%%%%%%%%%%%%%%%%%%%%%%%%%%%%%%%%%%%%%%%%%%%%%%%%%%%%%%%%%%
\subsection{An associated gradient dynamics} \label{sec:dep_dynamics}
%%%%%%%%%%%%%%%%%%%%%%%%%%%%%%%%%%%%%%%%%%%%%%%%%%%%%%%%%%%%%%%%%%%%%%%%%%%%%%%%%%%%%%%%%%%%%%%%%%%%%%%%%

From now on, we suppose $\rho \leq\rho_2$, so that the energy of a configuration of hard spheres defined by \eqref{eq:inex} is only generated by pairwise interactions.  
Due to Lemma \ref{def:depletion_potential_dimd} and the definition of the overlap function \eqref{def:depletion_potential}, the energy function becomes
\begin{equation} \label{eq:nrgparpaires}
    \nrg (\bgrande) = n v_{d} \,  \rdep\rspace ^d  - \sum_{1\leq i<j\leq n}  \Vovlap\Big(\frac{|\grande_i - \grande_j|}{2 \rdep}\Big).
\end{equation}

Therefore, using the explicit expression of $\Vovlap$ and $\Vovlap'$, the gradient field of the energy 
satisfies, for $i \in \{1,\cdots,d\}$,
\begin{align} \label{eq:gradnrg}
\nabla_i \nrg(\bgrande) &= - \frac{1}{2\rdep}
\sum_{j=1}^n \Vovlap '\Big(\frac{|\grande_i-\grande_j|}{2\rdep}\Big) \frac{\grande_i -\grande_j}{|\grande_i -\grande_j|}\nonumber\\
&= v_{d-1} ~ \rdep\rspace ^{d-1} 
    \sum_{j=1}^n \Big( 1-\frac{|\grande_i -\grande_j|^2}{4\rdep\rspace^2} \Big)_{\hspace{-1mm}+}^{\hspace{-1mm}\frac{d-1}{2}} 
                        \   \frac{\grande_i -\grande_j}{|\grande_i -\grande_j|} ,
\end{align}
where, as usual, $u_+\defeq \max (0,u)$ denotes the positive part of any real number $u$. 

Let us now define the diffusive dynamics of the hard spheres $(\Grande_i )_{i\leq n}$ submitted to ($\zpiccola$ times) this gradient field. It solves the following SDE:
\begin{equation}\tag{${\mathfrak S}_{n}^\textrm{dep}$}\label{eq:Sndep}
\begin{cases}
	\begin{array}{l}
  \textrm{for } i, j \in \{1,\dots,n\}, \, t \in [0,1] ,                             \\ 
  d \Grande_i (t) = \sigmaG\, d \WGrande_i(t) - \disp \frac{1}{2} %\,  \sigmaG^2 
  \, \nabla\psiG \big(\Grande_i (t)\big)\, d t \\ 
  \phantom{d \Grande_i (t) = %\sigmaG\, 
  d W_i(t) -  }
  % + \disp \frac{\zpiccola}{4\rdep} %\, \sigmaG^2  
  % \sum_{j=1}^n \Vovlap '\Big(\frac{|\Grande_i (t)-\Grande_j(t)|}{2\rdep}\Big) \frac{\Grande_i (t)-\Grande_j(t)}{|\Grande_i (t)-\Grande_j(t)|}\, d t 
  - \disp \frac{\zpiccola}{2} \,
  v_{d-1} ~ \rdep\rspace ^{d-1} 
    \sum_{j=1}^n \Big( 1-\frac{|\Grande_i -\Grande_j|^2}{4\rdep\rspace ^2} \Big)_{\hspace{-1mm}+}^{\hspace{-1mm}\frac{d-1}{2}} 
                          \frac{\Grande_i -\Grande_j}{|\Grande_i -\Grande_j|}\, d t  %~~  \un_{|\grande_k -\grande_j|\le2\rdep}
                          \\
  \phantom{d \Grande_i (t) = %\sigmaG\, 
  d W_i(t) - \frac{1}{2} %\,  %\sigmaG^2 
  \nabla\psiG \big(\Grande_i (t)\big)\, d t  + \disp \frac{\zpiccola}{2} \,
  v_{d-1}}
+ \disp \sum_{j=1}^n\big(\Grande_i (t)-\Grande_j(t)\big) d L_{ij}(t) \, 
, \\
\disp
  L_{ij}(0) = 0 , \quad L_{ij} \equiv L_{ji}, \quad
  L_{ij}(t) = \int_0^t \un_{|\Grande_i (s)-\Grande_j(s)|=2 \,\rgrande} \, d  L_{ij}(s), \quad L_{ii} \equiv 0 ,
  \end{array}
\end{cases}	
\end{equation}
where $\WGrande_1,...,\WGrande_n$ are $n$ independent $\R^d$-valued Brownian motions and $L_{ij}$ denotes the collision local time between the sphere $i$ and the sphere $j$. The local times $L_{ij}$ describe the effects of the elastic collision between the hard spheres $i$ and $j$ (subject to normal reflection).
\begin{theorem}
The SDE \eqref{eq:Sndep} admits a unique solution in the set of admissible configurations $\D$. 
Moreover the measure $\mugrande_{\zpiccola}$ defined by \eqref{eq:muronde} is reversible under this dynamics.
\end{theorem}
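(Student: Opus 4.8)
The plan is to view \eqref{eq:Sndep} as a finite-dimensional ($nd$-dimensional) gradient diffusion with normal reflection at the boundary of the domain $\D^\circ\defeq\D\cap\MGrande$ of admissible $n$-hard-sphere configurations, with identity diffusion matrix and drift $-\frac12\nabla\Phi_{\zpiccola}$, where
\[
   \Phi_{\zpiccola}(\bgrande)\defeq\sum_{i=1}^n\psiG(\grande_i)+\zpiccola\,\nrg(\bgrande),
\]
$\nrg$ being the energy \eqref{eq:nrgparpaires}. Its (unnormalised) reversible measure is then $\un_{\D^\circ}(\bgrande)\,\e^{-\Phi_{\zpiccola}(\bgrande)}\,d\bgrande=\un_{\D}(\bgrande)\,\e^{-\zpiccola\,\nrg(\bgrande)}\otimes_{i=1}^n\lambda(d\grande_i)$, i.e. precisely $\mugrande_{\zpiccola}$ up to normalisation, so reversibility will be automatic once the right existence/uniqueness framework is in place. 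For the reflecting domain I would reuse verbatim the verification from the proof of Proposition~\ref{prop:existencesolrevSnmR}: $\D^\circ$ is cut out by the single family of $\mathcal C^2$ constraints $\Gamma_{ij}$ of \eqref{def:Gammaij}, and the lower bound on $|\nabla\Gamma_{ij}|$, the bound on $D^2\Gamma_{ij}$, and the compatibility condition of \cite{MultipleConstraint} were all checked there; they specialise to the present case by setting $m=0$, which keeps the compatibility constant strictly positive ($b_0=1/(\sqrt2\,n^{3/2})$, no longer degenerating in the particle number).

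So the only genuinely new input is the regularity of the drift $\nabla\Phi_{\zpiccola}$. By Lemma~\ref{def:depletion_potential_dimd} one has $\Vovlap'(u)=-2v_{d-1}\rdep^{d}(1-u^2)_+^{(d-1)/2}$, hence the field $\nabla\nrg$ of \eqref{eq:gradnrg} is bounded (each component is a finite sum of terms bounded by $v_{d-1}\rdep^{d-1}$) and uniformly continuous; since $(1-u^2)_+^{(d-1)/2}$ vanishes continuously at $u=1$ for every $d\ge2$, $\nrg\in\mathcal C^1$ with bounded, uniformly continuous gradient in all dimensions. Differentiating once more, $\nrg$ is in fact $\mathcal C^2$ with bounded second derivatives as soon as $d\ge4$; in that regime $\Phi_{\zpiccola}$ is $\mathcal C^2$ with bounded derivatives and Theorems~2.2 and~2.5 of \cite{MultipleConstraint} (see also \cite{ST87}) apply directly, yielding for each initial configuration in $\D^\circ$ a unique strong solution together with the reversibility of $\mugrande_{\zpiccola}$. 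The low-dimensional cases $d=2,3$ are the only delicate ones: there $\nrg$ is merely $\mathcal C^{1,1}$ (if $d=3$) or $\mathcal C^{1,1/2}$ (if $d=2$), because the second derivative of $(1-u^2)_+^{(d-1)/2}$ jumps, respectively blows up, at $u=1$, a genuine interior singularity of the drift located on the set $\{|\grande_i-\grande_j|=2\rdep\}$.

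For $d\in\{2,3\}$ I would argue by approximation. Pick $\mathcal C^\infty$ functions $\nrg_\varepsilon$ on $(\R^d)^n$ with uniformly bounded first and second derivatives and $\|\nabla\nrg_\varepsilon-\nabla\nrg\|_\infty\to0$ as $\varepsilon\to0$ (mollification works, since $\nabla\nrg$ is bounded and uniformly continuous). For each $\varepsilon$, \cite{MultipleConstraint} provides a unique $\D^\circ$-valued strong solution $X^\varepsilon$ of the equation obtained from \eqref{eq:Sndep} on replacing $\nrg$ by $\nrg_\varepsilon$, reversible for $\un_{\D^\circ}\e^{-\Phi_{\zpiccola,\varepsilon}}$ with $\Phi_{\zpiccola,\varepsilon}\defeq\sum_i\psiG(\grande_i)+\zpiccola\,\nrg_\varepsilon$. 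Writing $X^\varepsilon=\Gamma\bigl(X^\varepsilon(0)+\bW-\tfrac12\textstyle\int_0^{\cdot}\nabla\Phi_{\zpiccola,\varepsilon}(X^\varepsilon(s))\,ds\bigr)$ (with $\bW$ the $\R^{nd}$-valued driving Brownian motion) through the Skorokhod reflection map $\Gamma$ associated with the compatible constraint system $\{\Gamma_{ij}\}$, which is Lipschitz-continuous for the uniform norm (a standard property of the Skorokhod problem for such compatible systems, underlying the uniqueness statement of \cite{MultipleConstraint}), a Gr\"onwall argument based on $\|\nabla\Phi_{\zpiccola,\varepsilon}-\nabla\Phi_{\zpiccola}\|_\infty\to0$ shows $(X^\varepsilon)_\varepsilon$ is Cauchy in the uniform norm; its limit $X$ solves \eqref{eq:Sndep}, and reversibility of $\mugrande_{\zpiccola}$ passes to the limit since $\Phi_{\zpiccola,\varepsilon}\to\Phi_{\zpiccola}$ locally uniformly and $\mugrande_{\zpiccola}$ is the normalised weak limit of $\un_{\D^\circ}\e^{-\Phi_{\zpiccola,\varepsilon}}\,d\bgrande$. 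In dimension $3$, $\nabla\Phi_{\zpiccola}$ is itself Lipschitz (both $\nabla\psiG$ and $\nabla\nrg$ are, the latter because $\Vovlap'(u)=-2\pi\rdep^{3}(1-u^2)_+$ is Lipschitz and $|\grande_i-\grande_j|\ge2\rgrande$), so pathwise uniqueness of the limit follows from the same Lipschitz-map-plus-Gr\"onwall estimate applied to two solutions.

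The step I expect to be the main obstacle is pathwise uniqueness in dimension $2$: there $\nabla\Phi_{\zpiccola}$ is only $\tfrac12$-H\"older near the threshold $\{|\grande_i-\grande_j|=2\rdep\}$, so the deterministic Gr\"onwall estimate fails (a $\tfrac12$-H\"older modulus violates the Osgood condition), and one must exploit the non-degeneracy of the additive Brownian noise, via a Zvonkin--Veretennikov-type regularisation argument for reflected diffusions with bounded measurable drift in a domain with the Lipschitz, compatible boundary structure already established. Should one be content with a reversible \emph{Markov} realisation rather than a pathwise-unique strong solution, an alternative valid in every dimension is to build the dynamics from the symmetric Dirichlet form $\tfrac12\int_{\D^\circ}\nabla f\cdot\nabla g\,\e^{-\Phi_{\zpiccola}}\,d\bgrande$, whose closability and quasi-regularity are immediate because $\e^{-\Phi_{\zpiccola}}$ is bounded above and below on compact subsets of $\D^\circ$ and $\D^\circ$ has Lipschitz boundary.
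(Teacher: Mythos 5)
Your overall strategy --- an $nd$-dimensional reflected gradient diffusion in $\D\cap\MGrande$ with potential $\Phi_{\zpiccola}=\sum_i\psiG(\grande_i)+\zpiccola\,\nrg$, compatibility of the constraints $\Gamma_{ij}$ inherited from the proof of Proposition~\ref{prop:existencesolrevSnmR} with $m=0$, and reversibility of $\mugrande_{\zpiccola}$ read off from \cite[Theorem~2.5]{MultipleConstraint} --- is exactly the paper's, and your treatment of $d\ge3$ is sound. In fact the paper handles all $d>2$ in one stroke, since \cite[Theorem~2.2]{MultipleConstraint} only needs the drift \eqref{eq:gradnrg} to be bounded and Lipschitz on $\D\cap\MGrande$, which it is for every $d\ge3$ (the exponent $(d-1)/2\ge1$ makes $(1-u^2)_+^{(d-1)/2}$ Lipschitz); your extra mollification step for $d=3$ is unnecessary but not wrong.

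The genuine gap is $d=2$, and it affects existence, not only uniqueness. The Cauchy property of $(X^\varepsilon)_\varepsilon$ via Gr\"onwall requires $\nabla\Phi_{\zpiccola}$ itself to be Lipschitz when you estimate the term $\nabla\Phi_{\zpiccola}(X^\varepsilon)-\nabla\Phi_{\zpiccola}(X^{\varepsilon'})$, and in $d=2$ it is only $\tfrac12$-H\"older near $\{|\grande_i-\grande_j|=2\rdep\}$; moreover a family $\nrg_\varepsilon$ with second derivatives bounded uniformly in $\varepsilon$ and $\nabla\nrg_\varepsilon\to\nabla\nrg$ uniformly cannot exist unless $\nabla\nrg$ is already Lipschitz, so your approximating family must have exploding second derivatives, which is exactly why the limit does not close. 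Your fallback, a ``Zvonkin--Veretennikov-type argument for \emph{reflected} diffusions'', is precisely the hard part and is left as a black box. The observation you are missing, and which the paper exploits, is that the singular set of the drift, $\{|\grande_i-\grande_j|=2\rdep\}$, is disjoint from the reflection set $\{|\grande_i-\grande_j|=2\rgrande\}$. One can therefore alternate, along a sequence of stopping times, between time intervals on which some pair is near contact (small $\Gamma_{ij}$) --- there reflection may act but the drift is smooth, and \cite[Theorem~2.2]{MultipleConstraint} applies --- and time intervals on which some pair is near the critical distance $2\rdep$ --- there the drift is merely bounded Borel but no local time is active, so the equation is an ordinary SDE with additive nondegenerate noise and strong existence and uniqueness follow from \cite[Theorem~1]{Ve81}. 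Patching these intervals together yields the strong solution in $d=2$ without any regularisation theory for reflected SDEs with irregular drift. A secondary caveat: your Gr\"onwall arguments rest on Lipschitz continuity of the Skorokhod map for this constraint system, which is not part of the compatibility framework of \cite{MultipleConstraint} and would itself need justification; the paper's route does not rely on it.
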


\begin{proof}

The stochastic differential system \eqref{eq:Sndep} describes the dynamics of an $nd$-di\-men\-sio\-nal gradient diffusion with reflection at the boundary of the set of admissible hard spheres
  $  \D \cap \MGrande = \bigcap_{1\le i<j \le n}
    \Big\{\bgrande\in \MGrande : \, \Gamma_{ij}(\bgrande) \geq 0 \Big\} $.
This  domain is induced by the pairwise constraint functions $ \Gamma_{ij}(\bgrande)\defeq\frac{|\grande_i - \grande_j|^2 }{ 4 \rgrande\rspace ^2} -1$ introduced in \eqref{def:Gammaij}.

We first verify the smoothness of each constraint function and their global compatibility: this was already done in steps (i) and (ii) of the proof of Proposition \ref{prop:existencesolrevSnmR} for a larger number of constraints.

Existence and uniqueness 
of a strong solution to \eqref{eq:Sndep} are then ensured by  \cite[Theorem 2.2]{MultipleConstraint} as soon as the gradient field of the energy function is Lipschitz continuous and bounded on the domain $ \D \cap \MGrande$. 
Considering the expression \eqref{eq:gradnrg}, this is the case in any dimension $d>2$.

In dimension $d=2$, however, the gradient
\begin{equation*} \label{eq:gradnrgd2}
\nabla_i \nrg(\bgrande) = 2\, \rdep \sum_{j=1}^n \sqrt{ \bigg(1-\frac{|\grande_i -\grande_j|^2}{4\rdep\rspace^2} \bigg)_{\hspace{-1mm}+}}\   \frac{\grande_i -\grande_j}{|\grande_i -\grande_j|}
\end{equation*}
is Lipschitz continuous on $ \D\cap \MGrande$, except around the configurations containing a pair of hard spheres whose centres lie at the critical distance $2\,\rdep=2\, \rgrande (1+\rho)$. This pathology corresponds to the explosion of the function $\Vovlap''$ at $u=1$ observed in Example \ref{ex:depinterdR2}. Nevertheless, the SDE \eqref{eq:Sndep} can be solved straightforwardly as follows. 

The solution is constructed progressively on intervals defined through a sequence of stopping times indicating, either (i) the moment in which a pair $i,j$ of hard spheres is close enough, say $\Gamma_{ij}(\bgrande) \in [0,\rho]$, or (ii) the moment in which two spheres are close to the critical distance, say $\Gamma_{ij}(\bgrande) \in [2 \rho,2 \rho (1 + \rho/2)]$. In the first kind of time interval, the energy function is smooth and we can use the existence result for $d>2$. In the second kind of time interval, the energy function is not smooth anymore, but the hard spheres cannot collide, meaning that \eqref{eq:Sndep} does not contain collision local times. Therefore, we can apply strong existence results for Brownian diffusions with bounded Borel drift as, e.g.,~\cite[Theorem 1]{Ve81}. 

Applying \cite[Theorem 2.5]{MultipleConstraint} (see also \cite{ST87}), we get that $\e^{-\zpiccola \nrg(\bgrande)} \,\un_{\D}(\bgrande)\, d\bgrande$ is a time-reversible measure for the dynamics \eqref{eq:SnmR}. This concludes the proof of the theorem.
\end{proof}

\smallskip

%%%%%%%%%%%%%%%%%%%%%%%%%%%%%%%%%%%%%%%%%%%%%%%%%%%%%%%%%%%%%%%%%%%%%%%%%%%%%%%%%%%%%%%%%%%%%%%%%%%%%
\subsection{High-density regime of the small-particle bath: towards an optimal packing of finitely-many hard spheres}\label{sec:packing}
%%%%%%%%%%%%%%%%%%%%%%%%%%%%%%%%%%%%%%%%%%%%%%%%%%%%%%%%%%%%%%%%%%%%%%%%%%%%%%%%%%%%%%%%%%%%%%%%%%%%%

In Proposition \ref{prop:tracemesureequ}, we saw that the $\log$-density of the reversible measure $\mugrande_{\zpiccola}$ of the $n$ hard  spheres is proportional to the activity $\zpiccola$ of the bath of particles (i.e., the medium) in which they evolve. It is therefore natural to consider the asymptotic behaviour of the measure $\mugrande_{\zpiccola}$ in a high-density regime of the bath, that is for $\zpiccola$ tending to $\infty$.

Heuristically, for a fixed activity $\zpiccola$, the measure $\mugrande_{\zpiccola}$ favours the configurations with low energy $\nrg$. As $\zpiccola$ increases, $\mugrande_{\zpiccola}$ will concentrate more and more on hard  spheres configurations with minimal energy, as expressed in Proposition \ref{prop:packing} below.

\smallbreak

Let us first introduce the \emph{contact number} $c_n(\bgrande)$ of an admissible configuration $\bgrande$ of $n$ spheres with radius $\rgrande$ as the number of its pairwise contacts:
\begin{equation*}
    c_n(\bgrande) \defeq \#\{ (i,j) \text{ such that } |\grande_{i}-\grande_{j}|=2\rgrande,\, 1\leq i<j\leq n\}.
\end{equation*}
\begin{proposition}\label{prop:packing}
Assume that $\rho \leq \rho_2$ so that the energy function $\nrg$  contains only pair interaction terms. Asymptotically in $\zpiccola$, the reversible probability measure $\mugrande_{\zpiccola}$ defined in \eqref{eq:muronde}
%of $n$ hard spheres in $\R^d$ that are submitted to the depletion force 
concentrates around admissible configurations which minimise the energy.
%maximise the contact number $c_n$. 
More precisely, let 
$\nrg^*_n\defeq \displaystyle \inf \{ \nrg (\bgrandey): \bgrandey\textrm{ is an admissible $n$-sphere configuration} \}$. 
%be the minimal energy of admissible $n$ hard-sphere configurations, and define the set of admissible configurations with $\eta$-minimal energy by 
%$\D_{*}^\eta \defeq \displaystyle \{ \bgrandey \in \D, \ \nrg (\bgrandey) \leq \nrg_* + \eta\} .$
Then
\begin{equation}\label{eq:etaconcentration}
    \forall \varepsilon, \eta >0 \quad \exists \zpiccola_c>0 \quad \forall \zpiccola>\zpiccola_c \quad  
    \mugrande_{\zpiccola}\Big(\Big\{\bgrandey \in \D: \ \nrg (\bgrandey) \leq \nrg_n^* + \eta \Big\}\Big) \geq 1 - \varepsilon .
\end{equation}
Moreover, admissible $n$-sphere configurations who realise the minimal energy $\nrg_n^*$ maximise the contact number $c_n$.
\end{proposition}

\begin{proof}
We first prove \eqref{eq:etaconcentration}.

Since $Z_\zpiccola\defeq \displaystyle \int_{\D} \e^{- \zpiccola \nrg(\bgrande )} \lambda^{\otimes n} (d \bgrande)$  is a normalisation constant, one has
\begin{align*}
\mugrande_{\zpiccola}\big(\{\bgrande \in \D: \ \nrg (\bgrande) \leq \nrg_n^* + \eta \}^c\big)  
& = \frac{\int_{\D}  \e^{- \zpiccola \nrg(\bgrande )}  \  \un_{\nrg (\bgrande) > \nrg_n^* + \eta} \ \lambda^{\otimes n} (d \bgrande)}{\int_{\D} \e^{- \zpiccola \nrg(\bgrande )}  \lambda^{\otimes n} (d \bgrande)}\\
& = \frac{\int_{\D} \e^{- \zpiccola \big(\nrg(\bgrande ) - (\nrg_n^* + \eta) \big)}  \  \un_{\nrg (\bgrande) > \nrg_n^* + \eta} \
  \lambda^{\otimes n} (d\bgrande)}{\int_{\D} \e^{- \zpiccola \big(\nrg(\bgrande ) - (\nrg_n^* + \eta) \big)}  \lambda^{\otimes n} (d\bgrande)}\\
  & \leq  \frac{\int_{\D} \e^{- \zpiccola \big(\nrg(\bgrande ) - (\nrg_n^* + \eta) \big)}  \  \un_{\nrg (\bgrande) > \nrg_n^* + \eta} \ \lambda^{\otimes n} (d \bgrande)}
  {\int_{\D} \e^{- \zpiccola \big(\nrg(\bgrande ) - (\nrg_* + \eta) \big)} \un_{\nrg (\bgrande) \leq \nrg_n^* + \eta} \lambda^{\otimes n} (d \bgrande)}\\
  & \leq  \frac{\int_{\D} \e^{- \zpiccola \big(\nrg(\bgrande ) - (\nrg_n^* + \eta) \big)}  \  \un_{\nrg (\bgrande) > \nrg_n^* + \eta} \ \lambda^{\otimes n} (d \bgrande)}
  {\lambda^{\otimes n} \big(\{\bgrande \in \D: \nrg (\bgrande) \leq \nrg_n^* + \eta\}\big)} 
  \eqdef\frac{\boldsymbol{n}(\eta,\zpiccola)}{\boldsymbol{d}(\eta)}.
\end{align*}
Note that the denominator $\boldsymbol{d}(\eta)$ is positive and does not depend on $\zpiccola$. For the numerator, note that for each fixed $\eta$, pointwise in $\bgrande$,
\begin{equation*}
    \lim_{\zpiccola\nearrow \infty} \e^{- \zpiccola \big(\nrg(\bgrande ) - (\nrg_n^* + \eta) \big)}  \  \un_{\nrg (\bgrande) > \nrg_n^* + \eta}=0
\end{equation*}
Since $\bgrande \mapsto \e^{- \zpiccola \big(\nrg(\bgrande ) - (\nrg_n^* + \eta) \big)}  \  \un_{\nrg (\bgrande) > \nrg_n^* + \eta}$ is uniformly bounded by the constant $1$, which is $\lambda^{\otimes n}$-integrable, the dominated convergence theorem implies that the numerator $\boldsymbol{n}(\eta,\zpiccola)$ vanishes as $\zpiccola$ increases. This completes the proof of \eqref{eq:etaconcentration}.
\smallbreak

Next, we investigate the shape of the admissible $n$-sphere configurations whose energy realises the minimum $\nrg_n^*$. Recalling \eqref{eq:nrgparpaires},
\begin{align*} 
     \nrg (\bgrande) = n v_{d} \,  \rdep\rspace ^d  - \sum_{1\leq i<j\leq n}  \Vovlap\Big(\frac{|\grande_i - \grande_j|}{2 \rdep}\Big).
\end{align*}
Thus,
\begin{equation} \label{minnrg}
\begin{aligned}[t]
     \nrg_* = \inf_{\bgrande \in \D}\nrg (\bgrande) &= n v_{d} \,  \rdep\rspace ^d - \max_{\bgrande \in \D}\sum_{1\leq i<j\leq n} \mathcal{V}_{\text{ovlap}}\Big(\frac{|\bgrande_i - \bgrande_j|}{2 \rdep}\Big)\\
      &= n v_{d} \,  \rdep\rspace ^d - \mathsf{c}(n,d) \  \Vovlap^*%\Big(\frac{\rgrande}{\rdep}\Big)
      \,,
\end{aligned}
\end{equation} 
where 
\begin{equation*}
     \mathsf{c}(n,d) \defeq \max \{ c_n(\bgrande), \ \bgrande=\{\grande_1, \dots,\grande_n\} \in \D \subset\R^d\}
\end{equation*}
is the maximum of the contact number of $n$ non-overlapping identical spheres in $\R^d$. Thus the configurations $\bgrande$ that minimise the energy are maximising their contact number $c(\bgrande)$, as claimed.
\end{proof}
The identification of the set of configurations whose contact number equals $\mathsf{c}(n,d)$ is a difficult topic, even in low dimensions $2$ and $3$, as we present in the next examples. 

Recall that $\mathsf{c}(n,d)$ is also the maximum number of edges that a contact graph of $n$ non-overlapping translations of $B(0,1)$ can have in  $\R^d$, see, e.g., the monograph~\cite{B} and~\cite[Chapter 3]{BAK}. Using this approach, we now review in more detail the situation in the Euclidean spaces $\R^2$ and $\R^3$.

\begin{example}[The planar case, $d=2$]

An explicit general formula for $\mathsf{c}(n,2)$ was introduced by Erd\H{o}s in 1946, but rigorously established only in 1974 by Harborth,~\cite{Ha74}: 
\begin{equation*}
    \mathsf{c}(n,2)=\lfloor 3\, n-\sqrt{12n-3}\rfloor, \quad n\geq 2.
\end{equation*}
In particular, one has
\begin{equation*}
    \mathsf{c}(2,2)=1, \quad \mathsf{c}(3,2)=3, \quad \mathsf{c}(4,2)=5,\quad \mathsf{c}(5,2)=7,\quad \mathsf{c}(6,2)=9, \quad \mathsf{c}(7,2)=12.
\end{equation*}
Moreover, the hexagonal packing arrangement is a cluster configuration whose contact number achieves $\mathsf{c}(n,2)$ for all $n$, see Figure~\ref{fig:c(n,2)}. But it also corresponds to clusters which realise the densest packing.
Nevertheless, the problem of recognising (all) contact graphs of unit disc packing is NP-hard, see~\cite{BK95}.

\begin{figure}
% \centering
\begin{tikzpicture} [scale=0.5]
   \foreach \th in {120} % angle du losange et decalage du dessin
      {\draw (0,0) circle(1); \draw (0,0) +(\th:2) circle(1); % cercles de gauche
       \draw (2,0) circle(1); \draw (2,0) +(\th:2) circle(1); % cercles de droite
       \path[draw=red] (0,0) -- (2,0) ; \path[draw=red] (0,0) -- +(\th:2) ; % angle en rouge
       %\path[draw=blue] (2+\decal,0) +(\th:2) -- (\decal,0) ; % autres segments
      % \path[draw=gray] (\decal,0) +(\th:2) -- (2+\decal,0) ; % autres segments
       \path[draw=red] (2,0) +(\th:2) -- (2,0) ; % autres segments
       \path[draw=red] (2,0) ++(\th:2) -- +(-2,0)  ; % autres segments
        };
   \path[draw=red] (0,0) -- +(60:2) ;  %\path[draw=blue] (24,0) +(0:2) -- +(60:2) ;   % les deux points de contact en plus         
\end{tikzpicture}
\hspace{1cm}
\begin{tikzpicture} [scale=0.5]
\draw (0,0) circle(1); 
\foreach \i in {0,-60,...,-180}{ \draw (\i:2) circle(1); \path[draw=red] (\i:2) -- (0,0); }; 
\path[draw=red] (0:2) { \foreach \i in {0,-60,...,-180}{ -- (\i:2)} };
\end{tikzpicture}
\hspace{1cm}
\begin{tikzpicture} [scale=0.5]
\draw (0,0) circle(1); 
\foreach \i in {0,-60,...,-240}{ \draw (\i:2) circle(1); \path[draw=red] (\i:2) -- (0,0); }; 
\path[draw=red] (0:2) { \foreach \i in {0,-60,...,-240}{ -- (\i:2)} };
\end{tikzpicture}
\hspace{1cm}
\includegraphics[scale=0.09]{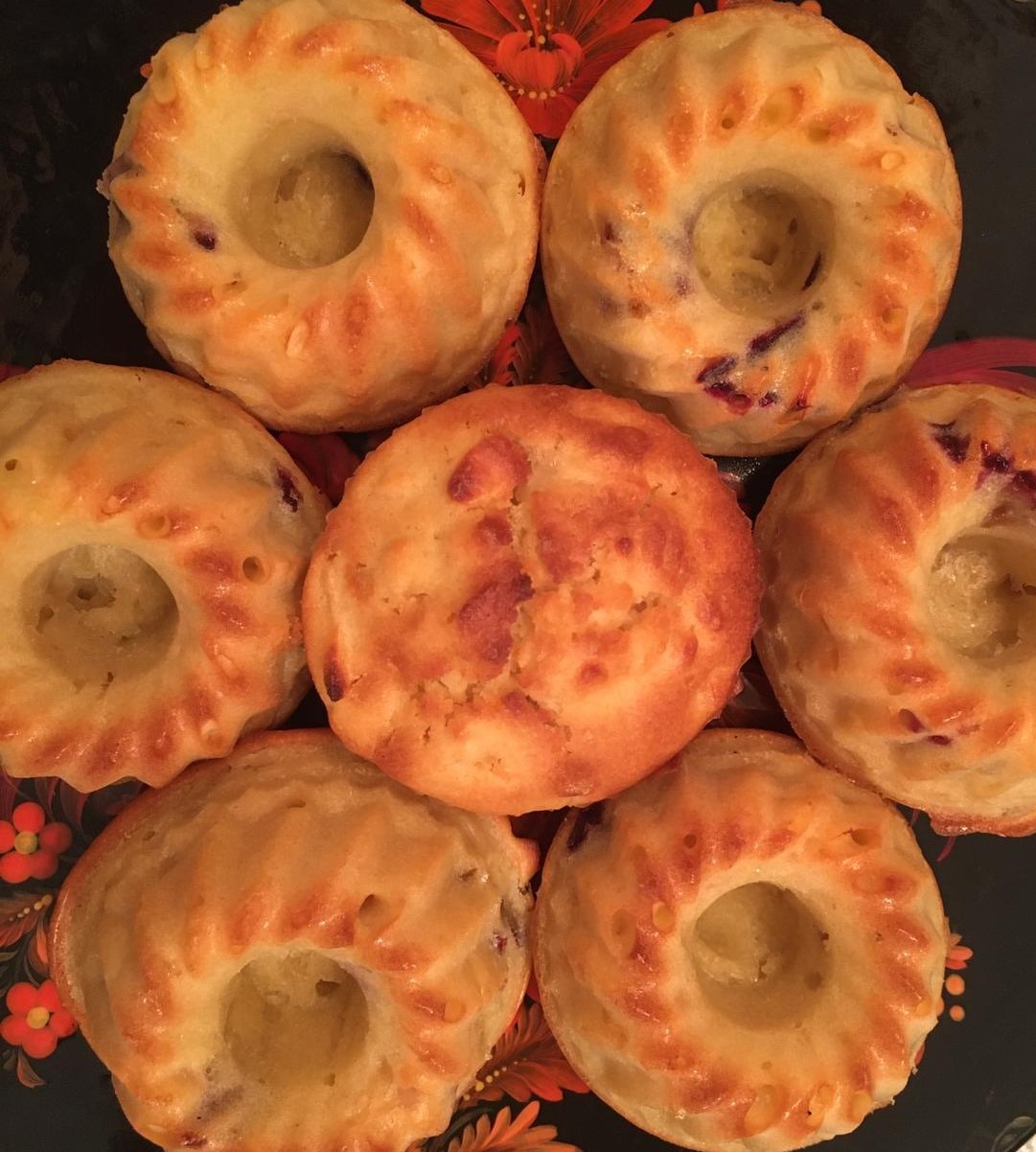}
\vspace{10pt}
\caption{Planar configurations with minimal energy $\nrg_n^*$ for $n=4,5,6$ and their induced contact graphs with respectively $5,\,7,\,9$ pair contacts. Optimal Kugelhopf configuration for $n=7$.}
\label{fig:c(n,2)}
\end{figure}
We now study the asymptotic behaviour of the minimal energy $\nrg_*$ for small $\rho$. 
It means that we expand the maximal overlap area obtained in \eqref{eq:maxVovlapd2}
$$ \Vovlap^*
   = 2\rdep\rspace ^2 \Big( \arccos\big(\frac{1}{1+\rho}\big) - \frac{1}{(1+\rho)^2}\sqrt{\rho (2 + \rho)}\, \Big) $$
   as a function of $\rho$.
Using the expansion for $\rho \ll 1$ of the function $\arccos$ around the point $1$, 
$\arccos(1-x) = \sqrt{2x} \big(1 + \frac{1}{12}x + O(x^2)\big)$, we obtain 
\begin{eqnarray*}
\Vovlap^*
  &=& 2\rgrande\rspace ^2 \Big( (1 + \rho)^2 \sqrt{2\frac{\rho}{1+\rho}}\big(1 + \frac{1}{12}\frac{\rho}{1+\rho} + O(\rho^2) \big) - \sqrt{2\rho} \sqrt{1 + \rho/2}\, \Big)\\
  &=& 2\rgrande\rspace ^2 \sqrt{2\rho} \Big( (1 + 2\rho)(1-\frac{1}{2}\rho)(1 + \frac{1}{12}\rho) - (1 + \frac{1}{4}\rho)\, \Big) + O(\rho^{5/2})\\
 % &=& 2\rgrande\rspace ^2 \sqrt{2\rho} \, \frac{4}{3} \, \rho + O(\rho^{5/2})
  &=& \frac{8\sqrt{2}}{3} \rgrande\rspace ^2 \, \rho^{3/2}+ O(\rho^{5/2}).
\end{eqnarray*}
The minimal energy $\nrg_n^*$ is then given by
\begin{equation*}
    \nrg_n^* = n \,\pi \,\rgrande\rspace ^2 \, \Big( 1 + 2 \rho  -  \frac{8\sqrt{2}}{3\pi}\frac{\mathsf{c}(n,2)}{n}  \, \rho^{3/2}+ O(\rho^2) \Big).
\end{equation*}
\end{example}

\begin{example}[The $3$-dimensional case]
The theoretical situation in $\R^3$ is mainly unsolved, with no hope of obtaining an explicit general formula for $\mathsf{c}(n,3)$ in terms of $n$. The only known exact values are the trivial ones, that is:
\begin{equation*}
    \mathsf{c}(2,3)=1,\quad \mathsf{c}(3,3)=3,\quad \mathsf{c}(4,3)=6, \quad \mathsf{c}(5,3)=9.% c(6,3)=10.
\end{equation*}
The largest known number of contacts for $n=6,7,8,9$ are $12,15,18,21$, respectively, see~\cite{BAK}. 
They correspond to clusters which also satisfy the property of being densest packing: the centres of the spheres are the lattice points of a face-centred cubic lattice, see~\cite[Figure 1]{BAK}.

In~\cite{HHGOH}, the authors study \emph{numerically} the number and the structure of optimal finite-sphere packings -- called isoenergetic states -- via exact enumeration. They underline the implications for colloidal crystal nucleation.
\medbreak

To conclude this section, we recall the expression of the  maximal overlap volume $\Vovlap^*$ computed in  Example \ref{ex:depinterdR3}:
\begin{equation*}
    \Vovlap^*= 
2\pi \, \rgrande\rspace ^3 \, \rho^2 \big(1 + \frac{2}{3} \rho\big).
\end{equation*}
Therefore, by \eqref{minnrg}, 
the minimal energy $\nrg_n^*$ is given by
\begin{equation*}
    \nrg_n^* = n \, \frac{4}{3} \,\pi \, \rgrande\rspace ^3 (1+\rho)^3  - \mathsf{c}(n,3) \, 2\pi \, \rgrande\rspace ^3 \, \rho^2 \big(1 + \frac{2}{3} \rho\big),    
\end{equation*}
which leads to the following  asymptotic expansion in $\rho$:
\begin{equation*}
\nrg_n^* = n \, \frac{4}{3} \,\pi \, \rgrande\rspace ^3  \Big( 1 \, + 3 \rho + 3 \Big(1 - \frac{\mathsf{c}(n,3)}{2n}\Big) \rho^2 + O(\rho^3)\Big) , \ \rho \ll 1.
\end{equation*}
\end{example}

%>>>>>>>>>>>>>>>>>>>>>>>>>>>>>>>>>>>>>>>>>>>>>>>>>>>>>>>>>>>>>>>>>>>>>>>>>>>>>>>>>
\section{Numerical simulations}\label{AppendixSimulations}
%>>>>>>>>>>>>>>>>>>>>>>>>>>>>>>>>>>>>>>>>>>>>>>>>>>>>>>>>>>>>>>>>>>>>>>>>>>>>>>>>>
We created a companion GitLab page (\href{https://lab.wias-berlin.de/zass/dynamics-of-spheres}{{\small \tt https://lab.wias-berlin.de/zass/dynamics-of-spheres}}), where the reader can find the code we wrote to generate illustrative movies that simulate various planar random dynamics of the type studied in this paper. In particular, we propose illustrations of Section \ref{sect_dyninfinie}, Section \ref{sec:dep_dynamics}, 
and Section \ref{sec:packing}, that is:
\begin{itemize}
    \item The two-type dynamics \eqref{eq:Sninfty} of %$n=4, 6$ or $8$ 
    Brownian hard spheres in a Brownian particle bath and their projections on the hard sphere system, for different values of $\rho$ and $\zpiccola$. The short range of the depletion attraction is clearly visible.
    \item The one-type dynamics \eqref{eq:Sndep} of %$n=8$ 
    Brownian hard spheres submitted to a gradient field derived from the depletion interaction.
    \item The dynamics \eqref{eq:Sninfty} of %$n=4$ 
    Brownian hard spheres in a Brownian particle bath with high density $\zpiccola$, which induces a very high stability of the initial hard sphere cluster configuration, close to a maximiser of its contact number.% equal to $5$.
\end{itemize}

\section*{Acknowldegments}
MF acknowledges support from the Labex CEMPI (ANR-11-LABX-0007-01). 
JK was partially funded by Deutsche Forschungsgemeinschaft (DFG) through grant CRC 1114 Scaling Cascades in Complex Systems, Project no. 235221301, Project C02 \emph{Interface dynamics: Bridging stochastic and hydrodynamic descriptions}.
SR thanks her Alsatian grandmother for the jelly doughnut recipe. 
AZ was funded by DFG through DFG Project no. 422743078 \emph{The statistical mechanics of the interlacement point process} in the context of SPP 2265 Random Geometric Systems.

\bigbreak

%\bibliographystyle{abbrv}
% \bibliographystyle{elsarticle-num}
% \bibliography{References}

%%%%%%%%%%%%%%%%%%%%%%%%%%%%%%%%%%%%%%%%%%%%%%%%%%%%%%%%%%%%%%%%%%%%%%%%%%%%%%%%%%%%%%%%%%%%%%%%%%%%%%%%%%%%%%%%%%
\end{document}